\newtheorem{teorema}{Theorem}
\newtheorem{lema}[teorema]{Lemma}
\newtheorem{proposicio}[teorema]{Proposition}
\newtheorem{corolari}[teorema]{Corollary}
\theoremstyle{definition}
\newtheorem{definicio}[teorema]{Definition}
\newtheorem{nota}[teorema]{Remark}
\newtheorem{exemple}[teorema]{Example}
\newtheorem{aex}{Example}
\newtheorem{problema}[teorema]{Problem}
\numberwithin{teorema}{section}
\DeclareMathOperator{\C}{C}
\DeclareMathOperator{\N}{N}
\DeclareMathOperator{\Aut}{Aut}
\DeclareMathOperator{\Ker}{Ker}
\DeclareMathOperator{\Syl}{Syl}
\DeclareMathOperator{\Fix}{Fix}
\DeclareMathOperator{\Fit}{Fit}
\DeclareMathOperator{\Soc}{Soc}
\DeclareMathOperator{\Frat}{Frat}
\DeclareMathOperator{\Z}{Z}
\title{
\vspace{-4cm}
Central nilpotency of left skew braces and solutions of the Yang--Baxter equation}
\author{A. Ballester-Bolinches\thanks{Departament de Matem\`atiques, Universitat de Val\`encia, Dr.\ Moliner, 50, 46100 Burjassot, Val\`encia, Spain, \texttt{Adolfo.Ballester@uv.es}, \texttt{Ramon.Esteban@uv.es}, \texttt{Vicent.Perez-Calabuig@uv.es}}\  \and R. Esteban-Romero\addtocounter{footnote}{-1}\footnotemark \and M. Ferrara\thanks{Dipartimento di Matematica e Fisica, Università degli Studi della Campania  ``Luigi Vanvitelli'', viale Lincoln 5, 81100 Caserta, Italy, \texttt{maria.ferrara1@unicampania.it}} \and V. P\'erez-Calabuig\addtocounter{footnote}{-2}\footnotemark \and M. Trombetti\addtocounter{footnote}{1}\thanks{Dipartimento di Matematica e Applicazioni ``Renato Caccioppoli'', Università degli Studi di Napoli Federico II, Complesso Universitario Monte S. Angelo, Via Cintia, 80126 Napoli, Italy, \texttt{marco.trombetti@unina.it}}}
\date{}
\begin{document}
\maketitle

\begin{abstract}
\noindent Nipotency of skew braces is related to certain  types of solutions of the Yang--Baxter equation. This paper delves into the study of centrally  nilpotent skew braces. In particular, we study their torsion theory (Section~\ref{sec:tor}) and we introduce an ‘‘index’’ for subbraces (Section~\ref{sec:index}), but we also show that the product of centrally nilpotent ideals need not be centrally nilpotent (Example \ref{ex:nofitting}), a rather peculiar fact. To cope with these examples, we introduce a special type of nilpotent ideal, using which, we define a {\it good} Fitting ideal. Also, a Frattini ideal is defined and its relationship with the Fitting ideal is investigated.

A key ingredient in our work is the characterisation of the commutator of ideals in terms of absorbing polynomials (Section \ref{sec:unif}); this solves Problem 3.4 of \cite{BonattoJedlicka22}. Moreover, we provide an example (Example~\ref{ex:noidealiser}) showing that the idealiser of a subbrace (as defined in \cite{JespersVanAntwerpenVendramin22}) does not exist in general.
\end{abstract}

\noindent\emph{Mathematics Subject Classification \textnormal(2022\textnormal)}:  Primary 16T25, 20F18\\ \phantom{\emph{Mathematics Subject Classification \textnormal(2022\textnormal)}:  }Secondary 81R50

\smallskip

\noindent\emph{Keywords:} brace, Yang--Baxter equation, nilpotency, Fitting ideal,\\\phantom{\emph{Keywords:}} commutator of ideals, idealiser

\section{Introduction}

\noindent The study of set-theoretical solutions of the Yang--Baxter equation (YBE) provides a common framework for a multidisciplinary approach from different  areas including knot theory, braid theory, or Garside theory among others (see \cite{Chouraqui10}, \cite{Faddev}, \cite{Gateva-Ivanova18-advmath},  for example). 

A (finite) \emph{set-theoretical solution} of the YBE is a pair $(X,r)$, where $X$ is a (finite) set and $r\colon X \times X \rightarrow X \times X$ is a bijective map satisfying the equality $r_{12}r_{23}r_{12} = r_{23}r_{12}r_{23}$, where $r_{12} = r\times \operatorname{id}_X$ and $r_{23} = \operatorname{id}_X \times\, r$. The main challenge in this area is to classify all set-theoretical solutions with prescribed properties. The algebraic structure of left skew braces plays a fundamental role in this classification.

A \emph{left skew brace} $B$ is a set endowed with two group structures, $(B,+)$ and $(B,\cdot)$, satisfying the following {\it skew} distributivity property $$a \cdot (b+c) = a\cdot b - a + a\cdot c\quad \forall\, a,b,c\in B.$$ If $(B,+)$ satisfies some property $\mathfrak{X}$ (such as abelianity), we say that $B$ is a left skew brace of {\it $\mathfrak{X}$ type}. Of course, left skew braces of abelian type are just  Rump's \emph{left braces} (see~\cite{GuarnieriVendramin17} and~\cite{Rump07}).

A {\it non-degenerate} set-theoretic solution of the YBE (solution, for short), i.e. a solution for which both components of $r$ are bijective, naturally leads to a left skew brace structure over the group 
\[G(X,r) = \langle x \in X\,|\, xy = uv,\, \text{if $r(x,y) = (u,v)$}\rangle\] (see \cite{Rump07}) --- this is said to be the \emph{structure left skew brace of~$(X,r)$} (note that it is an infinite skew brace). Conversely, every left skew brace $B$ defines a solution $(B,r_B)$ of the YBE (see~\cite{GuarnieriVendramin17}).

Nowadays, we are far from being able to understand arbitrary solutions of the YBE. But it is becoming more and more clear that almost every solution is multipermutation --- recall that a {\it multipermutation} solution is one that can be retracted into the trivial solution over a singleton after finitely many identification steps --- and nilpotency of left skew braces is precisely introduced to deal with multipermutation solutions (see \cite{cedo,cameron,JespersVanAntwerpenVendramin22} for example). In this paper, we go through a deep study of central nilpotency of left skew braces, with the aim of providing a rigorous framework that could be used to show that almost every solution is multipermutation. We hope our paper could be a reference point for all future work on the argument. 

\smallskip

We now highlight some of the main aspects of nilpotency of left skew braces we deal with (see next sections for the definitions):

\begin{itemize}
    \item In Section \ref{sec:tor}, we study its torsion theory, providing great extensions of some of the results in \cite{JespersVanAntwerpenVendramin22} (see in particular Theorem \ref{moltobello}).
    \item In Section \ref{sec:index}, we deal with the problem of defining an index for subbraces, proving that this is possible in the context of locally centrally-nilpotent left skew braces.
\end{itemize}

\medskip

Also we provide many examples showing how peculiar is the behaviour of centrally nilpotent left skew braces if compared to that of groups and rings. Among these examples, two are the most striking (see Section \ref{wex}):

\begin{itemize}
    \item Example \ref{ex:noidealiser} shows that, contrary to what claimed in \cite{JespersVanAntwerpenVendramin22}, it is not always possible to define the idealiser of a subbraces of a left skew brace of abelian type.
    \item Example \ref{ex:nofitting} shows that the product of two centrally nilpotent ideals of a left skew brace (of abelian type) need not be centrally nilpotent.
\end{itemize}

\medskip

In order to cope with the above examples, we introduce a new type of nilpotency for ideals (see Section \ref{sec:bnilp}). Using this new concept we are able to define a well-behaving {\it Fitting ideal} for left skew braces. In turns, the ~Fit\-ting ideal makes it possible to give a good definition of {\it Frattini ideal} for left skew braces. Using these definitions we are able to prove an analogue of a celebrated result of Gasch\"utz that relates the Frattini and the Fitting subgroups of a group (see Theorem \ref{gaschutz}).

\medskip

Finally, we note that one of the key steps in our work is showing that the two known definitions of commutators of ideals (inspired by distinct Universal Algebra approaches) coincide, thus solving Problem 3.4 of \cite{BonattoJedlicka22} (see Section \ref{sec:unif}).

\section{Preliminaries}
\label{sec:prelim}

\emph{From now on, the word ‘‘brace’’ will mean ‘‘left skew brace’’}

\smallskip

\noindent In this section, we fix notation and give some background concepts and results on braces. Although some of them can be considered folklore, others are new.

Let $(B,+,\cdot)$ be a brace. Both operations in $B$ can be related by the so-called {\it star product}: $a\ast b = -a + ab -b$, for all $a,b\in B$. Indeed, both group operations coincide if and only if $a\ast b=0$ for all $a,b\in B$; in this case, $B$ is said to be a \emph{trivial brace}. The following properties of the star product are essential to our work:
\begin{align}
(ab) \ast c & =  a \ast (b\ast c) + b\ast c + a \ast c;\label{eq:identitat_astprod}\\
ab & =  a + a \ast b + b; \label{eq:prod_ast}\\
a \ast (b+c) & = a \ast b + b + a \ast c - b,\label{eq:identitat_astsum}
\end{align}
for all $a,b,c\in B$. If $X$ and $Y$ are subsets of $B$, then $X \ast Y$ is the subgroup of~$(B,+)$ generated by the elements of the form $x \ast y$, for all $x \in X$ and $y \in Y$.

For every brace, $0$ denotes the common identity element of both group operations, and the product of two elements will be denoted by juxtaposition. As usual, group addition follows group product in the order of operations; also, the star product comes first in the order of operations.

A \emph{subbrace} of a brace is a subgroup of the additive group which is also a subgroup of the multiplicative group. A \emph{homomorphism} between two braces $A$ and $B$ is a map $f\colon A \rightarrow B$ satisfying that $f(a + b) = f(a) + f(b)$ and $f(ab) = f(a)f(b)$ for all  $a,b\in A$. Two braces are said to be \emph{isomorphic} if there exists a bijective homomorphism between them.

Let $B$ denote an arbitrary brace. Given two subsets $X,Y \subseteq B$, we write $\langle X \rangle_+,  [X,Y]_+$ for the subgroup generated by $X$ and the commutator of $X$ and $Y$ in $(B,+)$, respectively, and we write $\langle X\rangle_\cdot, [X,Y]_\cdot$ for the subgroup generated by $X$ and the commutator of $X$ and $Y$ in $(B,\cdot)$, respectively. Similarly, if we need to emphasize that some expressions are related to either the additive or the multiplicative structure, then we put a $+$ or $\cdot$ symbol, respectively, next to it.

The so-called $\lambda$-\emph{action} describes an action of the multiplicative group of~$B$ on the additive one. For every $a\in B$, the map $\lambda_a \colon B \rightarrow B$, given by $\lambda_a(b) = -a + ab$, is an automorphism of $(B,+)$ and the map $\lambda\colon (B,\cdot) \rightarrow \Aut(B,+)$ which sends $a \mapsto \lambda_a$ is a group homomorphism. For every $a,b\in B$,
\[ a \ast b := \lambda_a(b) - b = - a + ab - b.\] \indent The following concepts are apt to describe the substructural framework of an arbitrary brace $B$. \emph{Left ideals} are $\lambda$-invariant subbraces, or equivalently subbraces $L$ such that $B \ast L \subseteq L$. A left ideal $S$ is said to be a \emph{strong left ideal} if $(S,+)$ is a normal subgroup of $(B,+)$, and an \emph{ideal} if $(S,\cdot)$ is also a normal subgroup of $(B,\cdot)$, or equivalently $S \ast B \subseteq S$. Ideals of braces can be considered as the true analogues of normal subgroups in groups and ideals in rings, since they allow to take quotients in a brace. Thus, if $I$ is an ideal of $B$, then $B/I = \{bI = b+I: b \in B\}$ denotes the quotient of $B$ over~$I$. Finite sums and products of ideals coincide and are ideals; moreover, arbitrary intersections of ideals are ideals. It should also be remarked that, for each left ideal 
% canvi subbrace -> left ideal
$L$ and each strong left ideal $I$ of $B$, we have $LI = L+I$ is a left ideal. Furthermore, for the sake of simplicity, we introduce the following notations (here,~$E$ is a subset of $B$):
\begin{itemize}
    \item The subbrace~$\langle E\rangle$ {\it generated} by~$E$ in $B$ is the smallest subbrace of $B$ containing $E$ with respect to the inclusion. In this case, $E$ is also called a {\it system of generators} for $\langle E\rangle$; and if $E$ is finite, we say that $\langle E\rangle$ is {\it finitely generated}.
    \item The ideal~$E^B$ {\it generated} by~$E$ in $B$ is the smallest ideal of $B$ containing~$E$ with respect to the inclusion. If $E=\{a\}$, we usually denote $\{a\}^B$ simply by $a^B$.
    \item To denote that $C$ is an subbrace of $B$, we write $C\leq B$. To denote that~$I$ is an ideal of $B$, we write $I\trianglelefteq B$.
    \item If $C$ is a maximal subbrace of $B$, we also write $C<\!\!\!\cdot\, B$.
    \item If $C\leq B$, then the maximal ideal~of $B$ contained in $C$ is denoted by~$C_B$.
\end{itemize}

The following are examples of, respectively,  a left ideal and an ideal of a brace that play a central role in the study of nilpotency of braces:\[
\begin{array}{c}
\Fix(B)  = \{a \in B\,|\, \lambda_b(a) = a, \ \text{for every $b\in B$}\}
\end{array}\] and \[\begin{array}{c}
\Soc(B)  = \Ker\lambda \cap \Z(B,+) = \{a\in B\,|\, ab = a+b = b+a, \ \text{for every $b\in B$}\}.
\end{array}\] Also note that $B\ast B$ is always an ideal of $B$.

Minimal and maximal substructures, if they exist, turn out to be essential in every detailed study of an  algebraic structure. Let $S$ be a subbrace (resp. a left ideal or an ideal) of a brace $B$. Then $S$ is a \emph{minimal} subbrace (resp. left ideal or ideal) of $B$ if $S \neq 0$ and $0$ and $S$ are the only subbraces (resp. left ideals or ideals) of $B$ contained in $S$. Moreover, $S$ is a \emph{maximal} subbrace (resp. left ideal or ideal) of $B$ if $S$ is the only proper subbrace (resp. left ideal or  ideal) of $B$ containing~$S$.

\medskip

The following commutator of ideals is introduced in \cite{BournFacchiniPompili23} and plays a key role in the study of nilpotency and solubility in braces.

\begin{definicio}
Let $I,J$ be ideals of a brace $B$. The \emph{commutator} $[I,J]^B$ is defined as
\[ [I,J]^B := \langle [I,J]_+ \cup [I,J]_\cdot \cup \{ij - (i+j):\, i \in I, \, j \in J\}\rangle^B.\]
Clearly,  $[I,J]^B = [J,I]^B$ and $[I,J]^B \leq I \cap J$.
\end{definicio}

A brace $B$ is said to be \emph{abelian} if $[B,B]^B = 0$, i.e. if it is a trivial brace of abelian type. Using this commutator, solubility of braces has been introduced in \cite{BallesterEstebanJimenezPerezC-solublebraces}. If $I$ is an ideal of $B$, the \emph{commutator} or \emph{derived ideal of $I$ with respect to $B$} is defined as $\partial_B(I) = [I,I]^B$. If $B = I$, then we say that $\partial_B(B) = \partial (B)$ is the \emph{commutator} or \emph{derived ideal} of $B$. By repeatedly forming derived ideals, we recursively obtain a descending sequence of ideals
\[ \partial_0(I) = I \geq \partial_1(I) = \partial_B(I) \geq \ldots \geq \partial_n(I) \geq \ldots\]
with $\partial_{n}(I) = \partial_B(\partial_{n-1}(I))$ for every $n\in \mathbb{N}$. We call this series the \emph{derived series of $I$ with respect to $B$}. Clearly,
 $\partial_{n-1}(B)/\partial_{n}(B)$ is an abelian brace for every $n\in \mathbb{N}$, and $B/\partial(B)$ is the greatest abelian quotient in $B$.

\begin{definicio}
We say that an ideal $I$ of $B$ is  \emph{soluble with respect to $B$}, if there exists a non-negative integer $n$ such that $\partial_n(I) = 0$. If $I = B$, we simply say that $B$ is a \emph{soluble brace}, and the smallest non-negative integer~$m$ for which $\partial_m(B)=0$ is the {\it derived length} of $B$.
\end{definicio}

\section{Unifying Universal Algebra definitions of commutators of ideals in braces}\label{sec:unif}

\noindent One of the most fruitful research lines of Universal Algebra is commutator theory. The definition of commutator of ideals we gave in the previous section was inspired by a deep study of the so-called \emph{Huq $=$ Smith condition} for the category of braces (see \cite{BournFacchiniPompili23} for more details). In \cite{BonattoJedlicka22}, a definition of commutator of ideals in braces was introduced by means of another universal algebra point of view (see \cite{freese}). In this section, we show that both definitions coincide, thus answering a question raised in \cite{BonattoJedlicka22}.

\smallskip

We start by briefly recalling the definition of commutator of ideals given in \cite{BonattoJedlicka22}.

\begin{definicio}
Let $B$ be a brace. Given $n\in \mathbb{N}$, an $n$-\emph{polynomial} of $B$ is a map $$p\colon B\times \stackrel{(n)}{\cdots} \times B \rightarrow B$$ whose image $p(x_1,\ldots,x_n)$ is a finite sequence of sums and products of the variables~\hbox{$x_1,\ldots, x_n$} (note that also additive/multiplicative inverses are allowed). $\text{Pol}_n(B)$ denotes the set of all $n$-polynomials of $B$.

A polynomial $p \in \text{Pol}_n(B)$ is said to be \emph{absorbing} if 
\[p(x_1,\ldots,x_{i-1},0,x_{i+1},\dots, x_n) = 0,\]
for every $1\leq i \leq n$ and $x_j\in B$, with $1\leq j\neq i \leq n$.
\end{definicio}

\begin{exemple}\label{exampleabs}
If $B$ is a brace, then the maps $p_1(b_1,b_2) = [b_1,b_2]_+$, $p_2(b_1,b_2) = [b_1,b_2]_\cdot$ and $p_3(b_1,b_2)=b_1b_2-(b_1+b_2)$ are absorbing $2$-polynomials.
\end{exemple}

\begin{definicio} 
Let $I,J$ be ideals of a brace $B$. The commutator $\llbracket I, J \rrbracket^B$ is defined as 
\[  \llbracket I, J\rrbracket^B = \langle \, p(i,j)\,|\, i\in I, j\in J,\, p \in \text{Pol}_2(B) \text{ with $p$ absorbing}\,\rangle^B.\]
\end{definicio}

In \cite{BonattoJedlicka22}, Problem 3.4, the following questions about $\llbracket I, J\rrbracket^B$ were raised.

\begin{problema}\label{quest:commutadors}
Let $B$ be a brace. 
\begin{itemize}
    \item[\textnormal{(1)}] Does the equality $\llbracket I, J\rrbracket^B=\langle I \ast J+ J \ast I + [I,J]_+\rangle^B$ hold?
    \item[\textnormal{(2)}] Does the equality $\llbracket I, J\rrbracket^B = I \ast J  + J \ast I + [I,J]_+$ hold?
\end{itemize}
 
\end{problema} 

Our next theorem gives a positive answer to the first question, thus showing that commutators of ideals defined by means of either the Huq $=$ Smith condition or absorbing polynomials coincide. For the proof, we first remark that the following equality holds in braces.

\begin{nota}\label{eq:commutadors}
Let $B$ be a brace. Then, for every $i,j, x,y,z\in (B,+)$, we have that $i+x+j +y -i + z - j=[-i,-x]_+ + x + [-i,-j]_+ + [-i,-y]^{-j,+}_+ + [-j,-y]_+ + y + [-j,-z]_+ + z$.
\end{nota}

\begin{teorema}
\label{teo:commutador=ast}
Let $I, J$ be ideals of a brace $B$. Then:
\begin{enumerate}
\item[\textnormal{(1)}] $I \ast J + J \ast I + [I,J]_+$ is the least left ideal containing 
\[ X_{I,J} = [I,J]_+ \cup  [I,J]_\cdot \cup \{ij - (i+j):\, i \in I, \, j \in J\}.\]
\item[\textnormal{(2)}] $[I,J]^B = \langle I \ast J + J \ast I + [I,J]_+ \rangle^B$. Thus, $[I,J]^B = I \ast J + J \ast I + [I,J]_+$  if and only if $I \ast J + J \ast I + [I,J]_+$ is an ideal of $B$.
\item[\textnormal{(3)}] $[I,J]^B = \llbracket I, J \rrbracket^B$.
\end{enumerate}
\end{teorema}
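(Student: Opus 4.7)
The plan is to prove (1) directly, observe that (2) is an immediate consequence, and then establish (3) by passing to the quotient $B/[I,J]^B$ and performing a structural induction on 2-polynomials.

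For (1), one needs to verify that $X_{I,J}$ is contained in $S := I\ast J + J\ast I + [I,J]_+$, that $S$ is a left ideal, and that any left ideal containing $X_{I,J}$ contains $S$. The inclusion $[I,J]_+ \subseteq S$ is trivial. For $ij - (i+j) = ij - j - i$, equation \eqref{eq:prod_ast} yields $ij = i + (i\ast j) + j$, so $ij - (i+j) = i + (i\ast j) - i = [i, i\ast j]_+ + (i\ast j)$; the first summand lies in $[I,J]_+$ since $i\ast j \in I\ast J \subseteq J$. For $[i,j]_\cdot = iji^{-1}j^{-1}$, I would expand iteratively using \eqref{eq:prod_ast} together with $\lambda_{ab} = \lambda_a\lambda_b$, producing a long additive expression; Remark \ref{eq:commutadors} then permits rearranging this into a sum of $\ast$-products of elements of $I$ and $J$ plus additive commutators from $[I,J]_+$. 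The reverse inclusion $S \subseteq \langle X_{I,J}\rangle_+$ follows from $(i\ast j) = -[i, i\ast j]_+ + (ij - (i+j))$, which gives $I\ast J \subseteq \langle X_{I,J}\rangle_+$, and symmetrically for $J\ast I$. Closure of $S$ under $\lambda_b$ is then checked on generators: writing $\lambda_b(i\ast j) = (bi)\ast j - b\ast j$ via \eqref{eq:identitat_astprod}, one decomposes further through $bi = [b,i]_\cdot \cdot ib$ to route computations through products with a factor in $I$, after which \eqref{eq:identitat_astprod}--\eqref{eq:identitat_astsum} and Remark \ref{eq:commutadors} reduce everything into $S$; analogous checks handle $\lambda_b([i,j]_+)$ and $\lambda_b([i,j]_\cdot)$. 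Part (2) is then immediate, since $[I,J]^B$ is by definition the ideal generated by $X_{I,J}$, which equals the ideal generated by the least left ideal containing $X_{I,J}$, i.e.\ by $S$; the ``iff'' statement follows.

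For (3), the inclusion $[I,J]^B \subseteq \llbracket I,J\rrbracket^B$ is a direct consequence of Example \ref{exampleabs}, since the three types of generators of $X_{I,J}$ are evaluations of absorbing 2-polynomials at $(i,j)$. For the converse, I pass to the quotient $\bar B = B/[I,J]^B$ and denote by $\bar I, \bar J$ the images of $I, J$. From part (2) together with the identity used above, $I\ast J, J\ast I \subseteq [I,J]^B$, so in $\bar B$ one has $\bar I\ast\bar J = \bar J\ast\bar I = 0$ and $[\bar I, \bar J]_+ = 0$; hence $\alpha\beta = \alpha + \beta = \beta + \alpha = \beta\alpha$ for every $\alpha \in \bar I$ and $\beta \in \bar J$. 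I then prove by structural induction on a 2-polynomial $p$ that $p(i,j) = p(i,0) + p(0,j)$ in $\bar B$, for all $i \in \bar I, j \in \bar J$. The base cases $p = x_1, x_2$ are immediate. In the inductive step, $p_k(i,0)$ lies in the subbrace generated by $i$, hence in $\bar I$, and similarly $p_k(0,j) \in \bar J$; sums and additive inverses are then handled by additive commutativity of $\bar I$ and $\bar J$ in $\bar B$, products via $(\alpha+\beta)(\gamma+\delta) = (\alpha\beta)(\gamma\delta) = \alpha(\beta\gamma)\delta = \alpha(\gamma\beta)\delta = (\alpha\gamma)(\beta\delta) = \alpha\gamma + \beta\delta$, and multiplicative inverses via $(\alpha+\beta)^{-1} = (\alpha\beta)^{-1} = \beta^{-1}\alpha^{-1} = \alpha^{-1} + \beta^{-1}$. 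Applied to an absorbing $p$, the identity yields $p(i,j) = 0 + 0 = 0$ in $\bar B$, so $p(i,j) \in [I,J]^B$, completing the argument.

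The principal obstacle is the technical content of (1): showing $[i,j]_\cdot \in S$ and verifying the $\lambda$-invariance of $S$ both require unfolding multiplicative expressions into long additive ones via \eqref{eq:identitat_astprod}--\eqref{eq:identitat_astsum}, then applying Remark \ref{eq:commutadors} carefully to absorb the additive commutators produced during rearrangement into $[I,J]_+$, while keeping the $\ast$-products between elements of the ``correct'' ideals.
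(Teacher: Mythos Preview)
Your plan is essentially the paper's. For (1) the paper obtains the left-ideal property of $S$ by citing \cite[Proposition~1.4]{BonattoJedlicka22} rather than by a direct verification, and handles $[I,J]_\cdot\subseteq S$ by the same iterated expansion via \eqref{eq:prod_ast}--\eqref{eq:identitat_astprod} and Remark~\ref{eq:commutadors} that you sketch. For (3) the paper also passes to $B/[I,J]^B$ and uses precisely the splitting $p(i,j)\equiv p(i,0)+p(0,j)$; it simply asserts it, while you spell out the structural induction.

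There is one genuine gap in your sketch of (1). The claim that $J\ast I\subseteq\langle X_{I,J}\rangle_+$ ``symmetrically'' does not follow, because the third piece of $X_{I,J}$, namely $\{ij-(i+j):i\in I,\ j\in J\}$, is \emph{not} symmetric in $I$ and $J$. Swapping roles yields $j\ast i=-[j,j\ast i]_++(ji-(j+i))$; the commutator lies in $[J,I]_+=[I,J]_+$, but $ji-(j+i)$ is not an element of $X_{I,J}$, and there is no evident way to force it into the \emph{additive} span. The paper closes this by aiming instead at the least \emph{left ideal} $L\supseteq X_{I,J}$ and exploiting its $\lambda$-invariance: since $[j,i]_\cdot\in X_{I,J}\subseteq L$, one has $ji=ij\,[j,i]_\cdot=ij+\lambda_{ij}([j,i]_\cdot)\in ij+L$, and then a short additive rearrangement gives $ji-(j+i)\in L$, hence $j\ast i\in L$. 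So your reverse inclusion should target an arbitrary left ideal $L\supseteq X_{I,J}$ rather than $\langle X_{I,J}\rangle_+$; once you do this, your argument and the paper's coincide.
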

\begin{proof}
(1)\quad By \cite[Proposition 1.4]{BonattoJedlicka22}, $I \ast J + J \ast I $ and $[I,J]_+$ are left ideals. Since $[I,J]_+$ is a normal subgroup of $(B,+)$, it follows that $I \ast J + J \ast I + [I,J]_+$ is also a left ideal.

For the inclusion $X_{I,J}\subseteq I \ast J + J \ast I + [I,J]_+$, we need to prove that 
\[ \{ij - (i+j)\,:\, i\in I, \ j \in J\}, \, [I,J]_\cdot \subseteq I \ast J + J \ast I + [I,J]_+\]
Let $i \in I$ and $j \in J$. For the former, observe that $ij - j - i = i + i \ast j - i$. Thus, 
\[ ij - j - i = i + i \ast j - i - i\ast j + i \ast j \in [I,J]_+ + I \ast J.\]
For the latter, using~Eq.~\eqref{eq:prod_ast} repeatedly, it follows that
\begin{align*} iji^{-1}j^{-1} & = iji^{-1} + (iji^{-1})\ast j^{-1} + j^{-1} = \\
& = ij + (ij)\ast i^{-1} + i^{-1} + (iji^{-1})\ast j^{-1} + j^{-1} = \\
& = i + i\ast j + j + (ij)\ast i^{-1} + i^{-1} + (iji^{-1})\ast j^{-1} + j^{-1}.
\end{align*}

\noindent Applying~Eq.~\eqref{eq:identitat_astprod} repeatedly, the above equation becomes
\begin{align*}
&  i + i\ast j + j + i \ast (j \ast i^{-1}) + j \ast i^{-1} + i \ast i^{-1} + i^{-1} +  \\
& +  (ij)\ast (i^{-1}\ast j^{-1}) + i^{-1} \ast j^{-1} + (ij) \ast j^{-1} + j^{-1} = \\[0.2cm]
& = i + i\ast j + j + i \ast (j \ast i^{-1}) + j \ast i^{-1} + i \ast i^{-1} + i^{-1} + \\
& + i \ast\! (j \ast (i^{-1}\ast j^{-1})) + j \ast \! (i^{-1}\ast j^{-1}) + i \ast \!(i^{-1}\ast j^{-1}) + 
i^{-1} \ast j^{-1} + \\
& + i \ast (j \ast j^{-1}) + j \ast j^{-1} + i \ast j^{-1} + j^{-1}
\end{align*}

Observe that $i \ast i^{-1} + i^{-1} = -i + 0 -i^{-1} + i^{-1} = -i$ and $j \ast j^{-1} =  -j - j^{-1}$. Thus, we have
\begin{align}
iji^{-1}j^{-1} & =  i + i\ast j + j + i \ast (j \ast i^{-1}) + j \ast i^{-1} -i + i \ast (j \ast (i^{-1}\ast j^{-1})) \tag{$\star$} \label{eq:sum1}\\
& +  j \ast (i^{-1}\ast j^{-1}) + i \ast (i^{-1}\ast j^{-1}) + i^{-1} \ast j^{-1} + i \ast (j \ast j^{-1}) - j\tag{$\diamond$} \label{eq:sum2}\\
& - j^{-1} + i \ast j^{-1} + j^{-1} \notag
\end{align}
Since $I \ast J, J \ast I \subseteq I \cap J$, Remark \ref{eq:commutadors} applied on~\eqref{eq:sum1}+\eqref{eq:sum2} yields $$iji^{-1}j^{-1}\in I \ast J + J \ast I + [I,J]_+ + (-j^{-1}+i\ast j^{-1} + j^{-1}).$$ But, $-j^{-1} + i \ast j^{-1} + j^{-1} = [j^{-1}, -i\ast j^{-1}]_+ + i\ast j^{-1}\in [I,J]_+ + I \ast J$. Hence, $[I,J]_\cdot \subseteq I\ast J + J \ast I + [I,J]_+$ follows.

Now, let $L$ be the least left ideal of $B$ containing $X_{I,J}$ (note that the arbitrary intersection of left ideals is a left ideal). In order to prove that $I\ast J + J \ast I + [I,J]_+ = L$, we only need to show that $J \ast I\subseteq L$.

Let $j \in J$ and $i \in I$. Then $$j \ast i = [-j\ast i, -j]_+ + (ji - i - j) \in X_{I,J} + (ji-i-j)$$ as $j\ast i \in I \cap J$, so it suffices to prove that $(ji-i-j) \in L$. Since $[j,i]_\cdot \in X_{I,J}\subseteq L$ and $L$ is $\lambda$-invariant, it follows that 
\[ ji = ij[j,i]_\cdot = ij + \lambda_{ij}([j,i]_\cdot) = ij + x\]
for some $x \in L\cap I \cap J$. Therefore,
\begin{align*}
 ji - i - j & = ij + x - ([-j,-i]_+ + i + j) = ij + x - (i+j) + [-i,-j]_+ = \\
 & =  ij - (i+j) + [-(i+j),-x]_+ + x + [-i,-j]_+\in L
\end{align*}
Consequently, $I \ast J + J \ast I + [I,J]_+ = L$.

\medskip

(2)\quad By definition, $[I,J]^B = \langle X_{I,J} \rangle^B$. Then, the previous statement yields $[I,J]^B = \langle I \ast J + J \ast I + [I,J]_+\rangle^B$.

\medskip

(3)\quad By Example \ref{exampleabs}, $[I,J]^B\leq\llbracket I,J\rrbracket^B$. For the other inclusion, let $p$ be an absorbing $2$-polynomial, and choose $i \in I$ and $j\in J$. Recall that $p(i,j)$ is a finite sequence of sums and products of $i$, $j$ and their inverses. Since $[I,J]^B = \langle X_{I,J}\rangle^B$, we can express
\[ p(i,j) + [I,J]^B = (q_i + q_j) + [I,J]^B,\]
where $q_i\in I$ and $q_j\in J$ are such that \[p(i,0) + [I,J]^B = q_i + [I,J]^B\quad\textnormal{and}\quad p(0,j) + [I,J]^B = q_j + [I,J]^B.\] By definition of absorbing polynomial, $p(i,0) = 0 = p(0,j)$, so $q_i+q_j\in [I,J]^B$ and consequently $p(i,j) \in [I,J]^B$.
% i\circ i\circ j+ i=i\circ i+i+j
% i\circ i\circ j-i\circ j 
\end{proof}

The next example gives a negative answer to Problem~\ref{quest:commutadors} (2).

\begin{exemple}\label{ExampleBonatto}
Let \[
\begin{array}{c}
(B,+)  = \langle x, y \,|\, 4x = 4y = 0, \, x + y = y + x \rangle\quad\textnormal{and}\\[0.2cm]
(C,\cdot)  = \langle a, b, c \,|\, c^4=1,\, a^2=b^2 = c^2,\, (ab^{-1})^2 = 1,\, b^{-1}cb = c,\, a^{-1}ca = c\rangle
\end{array}.\] We see that $C$ acts on $B$ via
\[ \arraycolsep=20pt
\begin{array}{lll}
a(x) = x + 2y, & b(x) = x + 2y, & c(x) = 3x + 2y \\
a(y) = -y, & b(y) = 2x + y, & c(y) = 2x + 3y
\end{array}\]
Consider the semidirect product $G$ of $B$ by $C$ with respect to this action $\lambda \colon C\longrightarrow\operatorname{Aut}(B)$. For the sake of clarity, we use multiplicative notation for~$B$ in $G$. Let $D = \langle xa, yb, xyc\rangle \leq G$. It follows that $G$ is a trifactorised group as $BD = DC$, $D\cap C = D\cap B = 1$. By \cite[Lemma 3.2]{BallesterEsteban22}, there exists a bijective $1$-cocycle $\delta\colon C \rightarrow (B,+)$ given by $D = \{\delta(c)c: c \in C\}$ (see Table~\ref{tab-ex_commut-noestrella}). This yields a product in $B$ and we get a brace of abelian type, $(B,+,\cdot)$, which corresponds to \texttt{SmallBrace(16, 73)} in the \textsf{Yang--Baxter} library~\cite{VendraminKonovalov22-YangBaxter-0.10.2} for \textsf{GAP}~\cite{GAP4-12-2}.

\FloatBarrier
\begin{table}[h]
\begin{center}
\begin{tabular}{llllllll}
$g$   & $\delta(g)$ & $g$      & $\delta(g)$ & $g$       & $\delta(g)$ & $g$        & $\delta(g)$ \\ \hline
$1$   & $0$         & $b$      & $y$         & $c$       & $x+y$       & $bc$       & $3x$        \\
$a$   & $x$         & $b^3$    & $2x + 3y$   & $c^3$     & $3x + 3y$   & $bc^{-1}$  & $x + 2y$    \\
$a^2$ & $2x + 2y$   & $ab$     & $x + 3y$    & $ac$      & $2x + y$    & $abc$      & $2y$        \\
$a^3$ & $3x + 2y$   & $ab^{-1}$ & $3x + y$    & $ac^{-1}$ & $3y$        & $abc^{-1}$ & $2x$        \\ \hline
\end{tabular}
\end{center}
\caption{Bijective $1$-cocycle associated with the brace of Example \ref{ExampleBonatto}}
\label{tab-ex_commut-noestrella}
\end{table}
\FloatBarrier

Let $I = \langle 2x, y\rangle\leq (B,+)$. Then $\lambda(I)=I$ and $|I|=8$, so $I$ is an ideal of~$B$. Since $B$ is of abelian type, we compute
\[ I \ast I + [I,I]_+ = I \ast I = \langle u \ast v\,|\, u,v\in I\rangle_+ = \langle 2x \rangle_+ = \{1, 2x\}\]
Therefore, $I \ast I$ is not an ideal of $B$, as $\delta(abc^{-1}) = 2x$ and $\{1,abc^{-1}\}$ is not a normal subgroup of $C$. Hence, $I \ast I \subsetneq [I,I]^B = I$.
\end{exemple}

\section{Central nilpotency of braces}\label{sec:nilp}

\noindent The aim of this section is to develop a standard theory of central nilpotency of braces (for example, canonical lower and upper central series of a brace are studied and a torsion theory is established). In order to have the broadest range of applicability for our results, and also to simplify proofs, we usually deal with concepts that are much more general than central nilpotency (for example, local central-nilpotency, and hypercentrality).

We start by introducing the basic definitions. Central nilpotency of braces was first introduced by using a brace-theoretical analogue of the centre of a group (see~\cite{BonattoJedlicka22} and~\cite{JespersVanAntwerpenVendramin22}). The \emph{centre} of a brace  $B$ (also known as the {\it annihilator ideal} of $B$) is the ideal of $B$ defined as \[ \zeta(B) := \Soc(B) \cap \Fix(B) = \{a \in B \, |\, a + b = b + a = ab = ba, \ \forall\, b \in B\}\] (see \cite{CatinoColazzoStefanelli19}, where it was first introduced in the context of ideal extensions). Thus, abelian braces $B$ are precisely those ones satisfying $\zeta(B) = B$.

In \cite{colazzo} and \cite{Tr23}, the definition of central nilpotency has been extended to include more types of braces (see also \cite{Dixon}, where similar concepts for braces of abelian type are considered). 

\begin{definicio}
Let $B$ be a brace. If $J\leq I$ are ideals of $B$ satisfying\linebreak $I/J\leq\zeta(B/J)$, we say that $I/J$ is a {\it central factor} of $B$.

A {\it $c$-series} of $B$ is a chain $\mathcal{I}$ of ideals  of $B$ such that $0,B\in\mathcal{I}$ and whose factors are central, that is, $I/J\leq\zeta(B/J)$ for all {\it consecutive} elements $J\leq I$ of $\mathcal{I}$ (meaning that there is no $C\in\mathcal{I}$ satisfying $J<C<I$). A {\it complete} $c$-series is just a~\hbox{$c$-series} containing the unions and the intersections of all its members. Since every $c$-series can be completed, we usually consider every $c$-series to be complete. We say that $B$ is {\it $\zeta$-nilpotent} if it admits a $c$-series.

If $B$ has an ascending $c$-series $$0=I_0\leq I_1\leq\ldots I_\alpha\leq I_{\alpha+1}\leq\ldots I_\mu=B$$ (here $\alpha<\mu$ are ordinal numbers), then $B$ is {\it hypercentral}, and the smallest ordinal number $\mu$ for which such an ascending $c$-series exists is the {\it hypercentral length} $n_c(B)$ of $B$. (Note that $I_{\alpha+1}/I_\alpha\leq\zeta(B/I_\alpha)$ for all ordinals $\alpha<\mu$.)

If $B$ has a finite $c$-series \[0 = I_0 \leq I_1\leq \ldots \leq I_n = B,\] then $B$ is {\it centrally nilpotent}; in this case, the smallest non-negative integer for which such a $c$-series exists is referred to as the \emph{class} $n_c(B)$ of $B$. (Note that $I_{i+1}/I_i\leq\zeta(B/I_i)$ for all $0\leq i<n$.)
\end{definicio}

\medskip

Of course, centrally nilpotent braces are hypercentral, and hypercentral braces are $\zeta$-nilpotent, but the converses do not hold. Moreover, subbraces of centrally nilpotent (resp. hypercentral, $\zeta$-nilpotent) braces are centrally nilpotent (resp. hypercentral, $\zeta$-nil\-po\-tent). Also, quotients of centrally nilpotent (resp. hypercentral) braces are still centrally nilpotent (resp. hypercentral), but the consideration of the infinite dihedral group shows that quotients of a $\zeta$-nilpotent brace can be non-$\zeta$-nilpotent. Finally, direct products of hypercentral braces are hypercentral; direct products of finitely many centrally nilpotent braces are centrally nilpotent; and restricted direct products of $\zeta$-nilpotent braces are $\zeta$-nilpotent.

\smallskip

Following \cite{BonattoJedlicka22} and \cite{colazzo}, canonical $c$-series are introduced for a brace~$B$. 

\smallskip

\noindent ($\blacktriangle$) The {\it upper central series} of $B$ is recursively defined by putting: $\zeta_0(B)=0$, $\zeta_{\alpha+1}(B)/\zeta_\alpha(B) = \zeta(B/\zeta_\alpha(B))$ for every ordinal $\alpha$, and $\zeta_\lambda(B)=\bigcup_{\beta<\lambda}\zeta_\beta(B)$ for every limit ordinal $\lambda$. The last term of the upper central series is the {\it hypercentre} $\overline{\zeta}(B)$ of $B$.

\smallskip

\noindent ($\blacktriangledown$) The {\it lower central series} of $B$ is recursively defined by: $\Gamma_1(B)=B$, $\Gamma_{\alpha+1}(B) = \langle \Gamma_\alpha(B) \ast B, B \ast \Gamma_\alpha(B),[\Gamma_\alpha(B), B]_+\rangle_+$ for every ordinal $\alpha$, and $\Gamma_\lambda(B)=\bigcap_{\beta<\lambda}\Gamma_\beta(B)$ for every limit ordinal $\lambda$. Note that since $\Gamma_\alpha(B)$ is an ideal for every $\alpha\leq\mu$, we have that \[ \Gamma_{\alpha+1}(B) = \Gamma_\alpha(B)\ast B  + B \ast \Gamma_\alpha(B) + [\Gamma_\alpha,B]_+ = [\Gamma_\alpha(B),B]^B,\]  for every ordinal $\alpha<\mu$ (see Theorem~\ref{teo:commutador=ast}). The last term of the lower central series is the {\it hypocentre} $\overline{\Gamma}(B)$ of $B$.

\smallskip

The following easily provable all-in-one result shows the relations between the concepts we just introduced (see \cite{Ro72} for the definition of the upper central series $\{Z_\alpha(G)\}_{\alpha\in\operatorname{Ord}(G)}$ of a group $G$).

\begin{proposicio}\label{prop:centre-commut}
Let $B$ be a brace.
\begin{itemize}
    \item[\textnormal{(1)}] \textnormal{(see \cite[Proposition 17]{BallesterEstebanJimenezPerezC-solublebraces})}\; If $J\leq I$ are ideals of $B$, then $I/J \leq \zeta(B/J)$  if and only if $[I,B]^B \leq J$. In particular, if $0 = I_0 \leq I_1 \leq \ldots \leq I_n = B$ is a finite $c$-series, then: 
    \begin{itemize}
        \item[\textnormal{(a)}] $\Gamma_j(B) \leq I_{n-j+1}$ for every $1 \leq j \leq n+1$; so $\Gamma_{n+1}(B) = 0$.
        \item[\textnormal{(b)}] $I_j  \leq \zeta_j(B)$ for every $0 \leq j \leq n$; so $\zeta_n(B) = B$.
        \item[\textnormal{(c)}] $n_c(B)$ is the smallest number $n$ such that $\zeta_n(B)=B$, and the smallest number $n$ such that $\Gamma_{n+1}(B)=0$.
    \end{itemize}
    \item[\textnormal{(2)}] $B$ is hypercentral if and only if $B=\overline{\zeta}(B)$. Moreover, in this case $n_c(B)$ is the smallest ordinal $\alpha$ for which $B=\zeta_\alpha(B)$.
    \item[\textnormal{(3)}] $\zeta_\alpha(B) \subseteq \Z_\alpha(B,+) \cap \Z_\alpha(B,\cdot)$ for all ordinal $\alpha$. In particular, centrally nilpotent  \textnormal(resp. hypercentral\textnormal) braces are braces of nilpotent \textnormal(resp. hypercentral\textnormal) type whose additive group is nilpotent \textnormal(hypercentral\textnormal).
    \item[\textnormal{(4)}] $\partial_{n}(B) \leq \Gamma_{n+1}(B)$ for every $n\in \mathbb{N}$. In particular, centrally nilpotent braces are soluble with derived length less than or equal to their class.
\end{itemize}
\end{proposicio}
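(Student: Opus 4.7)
The common denominator of all four items is the equivalence $I/J\leq\zeta(B/J)\iff[I,B]^B\leq J$ asserted at the start of (1), cited from \cite[Proposition~17]{BallesterEstebanJimenezPerezC-solublebraces}. I would simply invoke it and then run book-keeping inductions powered by (a) the monotonicity of the commutator $[\,\cdot\,,\cdot\,]^B$ in each argument (immediate from the generator-based definition unified in Theorem~\ref{teo:commutador=ast}) and (b) the identity $\Gamma_{\alpha+1}(B)=[\Gamma_\alpha(B),B]^B$ recorded right after the definition of the lower central series.

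For (1)(a) I would induct on $j$: the base case is $\Gamma_1(B)=B=I_n$, and the inductive step chains $\Gamma_{j+1}(B)=[\Gamma_j(B),B]^B\leq[I_{n-j+1},B]^B\leq I_{n-j}$, the last inclusion being the cited equivalence applied to the $c$-factor $I_{n-j+1}/I_{n-j}$. Part (1)(b) is the dual induction: from $I_j\leq\zeta_j(B)$ and the equivalence applied to $I_{j+1}/I_j\leq\zeta(B/I_j)$ we obtain $[I_{j+1},B]^B\leq I_j\leq\zeta_j(B)$, and running the equivalence in reverse produces $I_{j+1}/\zeta_j(B)\leq\zeta(B/\zeta_j(B))=\zeta_{j+1}(B)/\zeta_j(B)$. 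Part (1)(c) then drops out since the truncations of both canonical series are themselves $c$-series of $B$.

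Part (2) is the transfinite analogue: given an ascending $c$-series $\{I_\alpha\}_{\alpha\leq\mu}$ one shows $I_\alpha\leq\zeta_\alpha(B)$ by transfinite induction, successor steps going exactly as in (1)(b) and limit steps by taking unions, whence $B=I_\mu\leq\overline{\zeta}(B)$; conversely, the upper central series is itself an ascending $c$-series whenever it reaches $B$, which also yields the minimality statement for $n_c(B)$. For (3), I would again induct transfinitely on $\alpha$: at a successor stage, any $x\in\zeta_{\alpha+1}(B)$ satisfies $x+\zeta_\alpha(B)\in\zeta(B/\zeta_\alpha(B))$, so that $[x,b]_+,[x,b]_\cdot\in\zeta_\alpha(B)\subseteq\Z_\alpha(B,+)\cap\Z_\alpha(B,\cdot)$ for every $b\in B$ by the inductive hypothesis, placing $x$ in $\Z_{\alpha+1}(B,+)\cap\Z_{\alpha+1}(B,\cdot)$; at limit stages one takes unions. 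The ``In particular'' statement then follows by combining (1)(c) and (2).

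Finally, (4) is a routine induction in $\mathbb{N}$ based on monotonicity: the base case is $\partial_0(B)=B=\Gamma_1(B)$, and
\[\partial_{n+1}(B)=[\partial_n(B),\partial_n(B)]^B\leq[\partial_n(B),B]^B\leq[\Gamma_{n+1}(B),B]^B=\Gamma_{n+2}(B),\]
with solubility and the derived-length bound following immediately via (1)(c). I do not foresee any genuine obstacle; the only care required is in separating successor from limit ordinals in the transfinite inductions of (2) and (3), which is entirely standard.
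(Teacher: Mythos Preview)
Your proposal is correct. The paper does not supply a proof of this proposition at all---it is introduced as an ``easily provable all-in-one result'' and left without argument---so your sketch simply fills in the natural inductions that the authors deemed routine, and does so accurately.
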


\medskip

The following result generalises Gr\"un's lemma for groups (see for instance~\cite{Ro72}, Part~1, p.48).

\begin{teorema}
Let $B$ be a brace such that $\zeta_2(B)>\zeta(B)$. Then either $[B,B]_+$ or $[B,B]_\cdot$ is a proper subset of $B$.
\end{teorema}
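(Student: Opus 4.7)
The strategy is a contrapositive argument: assuming both $[B,B]_{+}=B$ and $[B,B]_{\cdot}=B$, I would prove $\zeta_{2}(B)=\zeta(B)$. The central tool is Proposition~\ref{prop:centre-commut}(1) applied to $I=\zeta_{2}(B)$ and $J=\zeta(B)$, which yields $[\zeta_{2}(B),B]^{B}\leq\zeta(B)$. Combined with the description of the commutator coming from Theorem~\ref{teo:commutador=ast}, this forces, for every $a\in\zeta_{2}(B)$ and every $b\in B$,
\[
[a,b]_{+},\ [a,b]_{\cdot},\ a\ast b,\ b\ast a\in\zeta(B).
\]
Note moreover that $(\zeta(B),+)$ and $(\zeta(B),\cdot)$ are both abelian, because the very definition of $\zeta(B)$ gives $x+y=y+x=xy=yx$ whenever $x\in\zeta(B)$ and $y\in B$.

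For a fixed $a\in\zeta_{2}(B)$, I would then consider the four maps $\alpha,\beta,\gamma,\delta\colon B\to\zeta(B)$ sending $b$ respectively to $[a,b]_{+}$, $[a,b]_{\cdot}$, $a\ast b$, and $b\ast a$, and show that each of them vanishes identically. The maps $\alpha$ and $\beta$ are handled by the classical fact that a commutator with central values behaves like a homomorphism: since $\alpha(b)\in\Z(B,+)$ and $\beta(b)\in\Z(B,\cdot)$, the routine identities give $\alpha(b+c)=\alpha(b)+\alpha(c)$ and $\beta(bc)=\beta(b)\beta(c)$. So $\alpha$ is an additive homomorphism from $B$ into an abelian group, hence factors through $B/[B,B]_{+}=0$, and similarly $\beta$ is a multiplicative homomorphism factoring through $B/[B,B]_{\cdot}=0$. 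The map $\gamma$ is just as direct: Eq.~\eqref{eq:identitat_astsum} together with $a\ast c\in\Z(B,+)$ reduces $a\ast(b+c)$ to $a\ast b+a\ast c$, so $\gamma$ vanishes for the same reason as $\alpha$.

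The main obstacle is $\delta$, because now the ``central'' argument $a$ sits in the right slot of $\ast$ and there is no direct distributivity identity in the second variable. I would circumvent this using Eq.~\eqref{eq:identitat_astprod}, which gives
\[
(bc)\ast a=b\ast(c\ast a)+c\ast a+b\ast a.
\]
The decisive observation is that $c\ast a\in\zeta(B)\subseteq\Fix(B)$ makes the term $b\ast(c\ast a)$ vanish, leaving $\delta(bc)=\delta(c)+\delta(b)$; by abelianity of $(\zeta(B),+)$ this actually yields a homomorphism $(B,\cdot)\to(\zeta(B),+)$, so $\delta$ vanishes because $[B,B]_{\cdot}=B$. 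Combining the four vanishings, $a+b=ab=ba=b+a$ for every $b\in B$, whence $a\in\zeta(B)$; this contradicts $\zeta_{2}(B)>\zeta(B)$ and completes the proof.
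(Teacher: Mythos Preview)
Your argument is correct and closely related to the paper's, though organised differently. The paper first invokes the classical Gr\"un lemma for groups to deduce, under the contradiction hypothesis $[B,B]_+=B=[B,B]_\cdot$, that $Z(B,+)=Z_2(B,+)$ and $Z(B,\cdot)=Z_2(B,\cdot)$; this immediately places $\zeta_2(B)$ inside $Z(B,+)\cap Z(B,\cdot)$, and then only your map $\gamma$ (called $\varphi_c$ there) is needed to finish: it is a nonzero additive homomorphism into the abelian group $\zeta(B)$, so $[B,B]_+\leq\Ker\gamma<B$. Your route is more self-contained: you effectively re-prove Gr\"un via the maps $\alpha,\beta$ and then treat the brace-specific maps $\gamma,\delta$ in parallel, obtaining a fully elementary argument that does not appeal to any external group-theoretic lemma. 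The price is a little redundancy --- once $\alpha,\beta,\gamma$ vanish you already have $a\in Z(B,+)\cap Z(B,\cdot)\cap\Ker\lambda$, and then $\lambda_b(a)=-b+ba=-b+ab=-b+a+b=a$ shows $a\in\Fix(B)$ automatically, so the map $\delta$ is not strictly needed.
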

\begin{proof}
We may assume by Gr\"un's lemma that $Z(B,+)=Z_2(B,+)$ and $Z(B,\cdot)=Z_2(B,\cdot)$. By Proposition \ref{prop:centre-commut}, $$\zeta_2(B)\subseteq Z_2(B,+)\cap Z_2(B,\cdot)=Z(B,+)\cap Z(B,\cdot).$$ Now, choose $c\in \zeta_2(B)\setminus \zeta(B)$ and consider the map $$\varphi_c:b\in B\mapsto c\ast b\in \zeta(B).$$ Let $b_1,b_2\in B$. Then $$c\ast(b_1+b_2)=c\ast b_1+c\ast b_2$$ because $c\ast b_2\in\zeta(B)$, so $\varphi_c$ is a homomorphism with respect to $+$. Since $c\not\in\zeta(B)$, we have that $c\not\in\operatorname{Ker}(\lambda)$ and consequently $\varphi_c(B)\neq0$. Thus $[B,B]_+$ is properly contained in $B$ and we are done.
\end{proof}

\medskip

In order to see if a brace is centrally nilpotent (resp. hypercentral) or not, we only need to look at its countable subbraces: this is the content of our next result.

\begin{teorema}\label{countablenilp}
Let $B$ be a brace. Then:
\begin{itemize}
\item[\textnormal{(1)}] $B$ is centrally nilpotent of class at most $c$ if and only if its finitely generated subbraces are centrally nilpotent of class at most $c$.
\item[\textnormal{(2)}] $B$ is centrally nilpotent if and only if its countable subbraces are centrally nilpotent.
\item[\textnormal{(3)}] $B$ is hypercentral if and only if its countable subbraces are hypercentral.
\end{itemize}
\end{teorema}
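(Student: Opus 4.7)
The ``if'' direction of each of the three equivalences is immediate: as recalled in Section~\ref{sec:nilp}, subbraces of centrally nilpotent (resp.\ hypercentral) braces inherit the property. I therefore focus on the converses, and build (1) and (2) on the following locality property of the lower central series.

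\emph{Locality lemma.} For every $n\in\mathbb{N}$ and every $g\in\Gamma_{n+1}(B)$ there is a finitely generated subbrace $F\leq B$ with $g\in\Gamma_{n+1}(F)$. I would prove this by induction on $n$, the case $n=0$ being trivial. For the inductive step, Theorem~\ref{teo:commutador=ast} gives
\[\Gamma_{n+1}(B)=[\Gamma_n(B),B]^B=\langle\Gamma_n(B)\ast B + B\ast\Gamma_n(B) + [\Gamma_n(B),B]_+\rangle^B,\]
and any $g$ in this ideal is assembled from finitely many ``basic'' terms $x_i\ast y_i$, $y_i\ast x_i$, $[x_i,y_i]_+$ (with $x_i\in\Gamma_n(B)$, $y_i\in B$) by finitely many brace operations that involve only finitely many extra parameters $z_j\in B$ (used for additive/multiplicative conjugations and $\lambda$-actions closing up the ideal). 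The inductive hypothesis supplies finitely generated $F_i\leq B$ with $x_i\in\Gamma_n(F_i)$; setting $F:=\langle\bigcup_i F_i\cup\{y_i\}\cup\{z_j\}\rangle$ and using a parallel induction for the monotonicity of $\Gamma_n$ under subbrace inclusion, the same operations realise $g$ inside $\Gamma_{n+1}(F)$.

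From the lemma, (1) is immediate: if every finitely generated $F\leq B$ has class at most $c$, then $\Gamma_{c+1}(F)=0$ for all such $F$, hence $\Gamma_{c+1}(B)=0$. For (2), if $B$ were not centrally nilpotent, for each $n$ I would pick a nonzero $g_n\in\Gamma_{n+1}(B)$ and a finitely generated $F_n\leq B$ with $g_n\in\Gamma_{n+1}(F_n)$, and then set $C:=\langle\bigcup_n F_n\rangle$. Being countably generated, $C$ is countable (every brace generated by a countable set has countably many elements, its elements being finite brace words on the generators), and monotonicity yields $g_n\in\Gamma_{n+1}(C)$ for every $n$; so $C$ is a countable subbrace that fails to be centrally nilpotent, contradicting the hypothesis.

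The delicate part will be (3), since hypercentrality is not ``finitary'' in the above sense and $\zeta(C)$ has a priori no relation with $\zeta(B)$ for a subbrace $C\leq B$. My plan is a witness-closure argument. Assume every countable subbrace of $B$ is hypercentral and, for contradiction, that $B$ is not. Then $\bar B:=B/\overline{\zeta}(B)$ is nonzero and, by the very definition of the hypercentre, $\zeta(\bar B)=0$. Starting from any nonzero $\bar x_0\in\bar B$, I would build a chain $\bar C_0\leq\bar C_1\leq\cdots$ of countable subbraces of $\bar B$ as follows: set $\bar C_0:=\langle\bar x_0\rangle$; given $\bar C_n$, for each nonzero $\bar c\in\bar C_n$ choose $\bar y(\bar c)\in\bar B$ such that at least one of $\bar c\ast\bar y(\bar c)$, $\bar y(\bar c)\ast\bar c$, $[\bar c,\bar y(\bar c)]_+$ is nonzero (possible because $\zeta(\bar B)=0$), and let $\bar C_{n+1}$ be the subbrace generated by $\bar C_n$ together with all these $\bar y(\bar c)$. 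The union $\bar C:=\bigcup_n\bar C_n$ is a nonzero countable subbrace of $\bar B$ in which the non-centrality of every nonzero element is witnessed \emph{inside} $\bar C$; hence $\zeta(\bar C)=0$, and since $\bar C\neq 0$, $\bar C$ is not hypercentral. Lifting each generator of $\bar C$ to a preimage in $B$ produces a countable subbrace $D\leq B$ whose image under the canonical projection $\pi\colon B\to\bar B$ is exactly $\bar C$, so $D/(D\cap\overline{\zeta}(B))\cong\bar C$. Because quotients of hypercentral braces are hypercentral, $D$ cannot be hypercentral, contradicting the hypothesis. The principal technical obstacle is ensuring that the witness-closure in $\bar B$ really does trivialise $\zeta(\bar C)$; this is precisely what the stage-by-stage bookkeeping above secures, after which a standard isomorphism theorem for braces completes the argument.
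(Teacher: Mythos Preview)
Your argument is correct, but it diverges from the paper's in two places worth noting. For (1), you work with the \emph{lower} central series via a locality lemma (every element of $\Gamma_{n+1}(B)$ already lies in $\Gamma_{n+1}(F)$ for some finitely generated $F$), whereas the paper works with the \emph{upper} central series, characterising $\zeta_c(B)$ as the set of $b$ for which every length-$c$ iterated ``commutator'' $(\ldots((b\circ b_1)\circ\ldots)\circ b_c)$ vanishes, where $\circ$ ranges over $[\cdot,\cdot]_+$, $[\cdot,\cdot]_\cdot$, $\ast$. Both are standard dual approaches; yours is slightly overstated (the extra parameters $z_j$ are unnecessary, since $\Gamma_{n+1}(B)$ is already just the additive subgroup generated by the basic terms, no ideal closure needed), but the idea is sound. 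For (3), the paper's route is shorter: after reducing to $\zeta(B)=0$ it builds a \emph{single} sequence $b_1,b_2,\ldots$ with $b_{n+1}=b_n\circ a_n\neq 0$, takes $C=\langle a_i,b_j\rangle$, and then uses that $C$ is hypercentral (by hypothesis) to get a strictly decreasing sequence of ordinals $\alpha_n$ (the first stage of the upper central series of $C$ containing $b_n$), contradicting well-foundedness. Your L\"owenheim--Skolem-style witness closure instead manufactures a countable $\bar C$ with $\zeta(\bar C)=0$ outright and then lifts; this is heavier machinery but has the merit of making explicit the quotient-and-lift step that the paper's ``we may certainly assume $\zeta(B)=0$'' leaves to the reader.
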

\begin{proof}
For each $u,v\in B$, the symbol $u\circ v$ means that we are performing one (but we do not know which) of the following operations $[u,v]_\cdot,[u,v]_+,u\ast v$. The first statement is a direct consequence of the fact that $\zeta_c(B)$ can be easily characterised as the set of all elements $b\in B$ such that $$(\ldots((b\circ b_1)\circ\ldots)\circ b_c)=0$$ for all $b_1,\ldots,b_c\in B$. 

\medskip

\noindent(2)\quad Only one implication is in doubt. Thus, assume all countable subbraces of $B$ are centrally nilpotent but $B$ is not centrally nilpotent. By (1), for each $c\in\mathbb{N}$, there is a finitely generated centrally nilpotent brace $B_c$ whose class is at least $c$. Then $C=\langle B_i\,:\, i\in\mathbb{N}\rangle$ is a countable subbrace of $B$ that is not centrally nilpotent, a contradiction.

\medskip

\noindent(3)\quad Only one implication is in doubt. Thus, assume all countable subbraces of $B$ are hypercentral, and that $B$ is not hypercentral. We may certainly assume that $\zeta(B)=0$. Let $0\neq x\in B$. Then there are sequences of non-zero elements $$a_1,a_2,\ldots,a_n,\ldots\quad\textnormal{and}\quad x=b_1,b_2\ldots,b_n,\ldots$$ of $B$ such that $b_{n+1}=b_n\circ a_n$ for all $n\in\mathbb{N}$. Let $C=\langle a_i,b_j\,:\, i,j\in\mathbb{N}\rangle$. Thus,~$C$ is countable and so hypercentral. Now, for each $i\in\mathbb{N}$, let $\alpha_i$ be the smallest ordinal $\beta$ such that $b_i\in\zeta_\beta(C)$. It follows that $$\alpha_1>\alpha_2>\ldots>\alpha_n>\ldots$$ is a strictly descending chain of ordinal numbers, a contradiction.\end{proof}

\medskip

In studying the structure of an arbitrary group, local analogs of nilpotence are very useful. Similarly, the following definition is crucial for us in studying centrally nilpotent braces (see \cite{colazzo}).

\begin{definicio}
A brace $B$ is {\it locally centrally-nilpotent} if every finitely generated subbrace is centrally nilpotent.
\end{definicio}

\medskip

Of course, every subbrace/quotient of a locally centrally-nilpotent brace is still locally centrally-nilpotent, and also restricted direct products of locally centrally-nilpotent braces are locally centrally-nilpotent. Moreover, by~\cite{Tr23},~Co\-rol\-la\-ry~3.6, every hypercentral brace is locally centrally-nilpotent, but the converse does not hold. As a consequence of the following result, we see that every locally centrally-nilpotent brace is $\zeta$-nilpotent.

\begin{teorema}\label{locallnilp}
Let $B$ be a locally centrally-nilpotent brace.
\begin{itemize}
    \item[\textnormal{(1)}] If $I$ is any minimal ideal of $B$, then $I\leq\zeta(B)$.
    \item[\textnormal{(2)}] If $M$ is any maximal subbrace of $B$, then $M$ is an ideal of $B$.
\end{itemize}

\noindent In particular, every minimal ideal of $B$ has prime order, and $\partial(B)$ is contained in every maximal subbrace of $B$.
\end{teorema}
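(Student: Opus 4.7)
The plan for (1) is to exploit the minimality of $I$ via the commutator dichotomy: since $[I,B]^B$ is an ideal of $B$ contained in $I$, it equals either $0$ or $I$. In the first case, Proposition~\ref{prop:centre-commut}(1) applied with $J=0$ immediately gives $I\le\zeta(B)$. The task is therefore to rule out $[I,B]^B=I$. Assuming this and picking $0\ne i\in I$, by Theorem~\ref{teo:commutador=ast} and the finitary nature of ideal generation there exist finitely many elements $i_1,\ldots,i_n\in I$ and $b_1,\ldots,b_m\in B$ such that, inside the finitely generated subbrace $C_1=\langle i,i_1,\ldots,i_n,b_1,\ldots,b_m\rangle$ with $J_1=I\cap C_1$ an ideal of $C_1$, one has $i\in[J_1,C_1]^{C_1}\subseteq\Gamma_2(C_1)$. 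Each $i_k$ again belongs to $I=[I,B]^B$, and iterating this expansion produces, at step~$r$, a finitely generated subbrace $C_r\supseteq C_{r-1}$ with $i\in\Gamma_{r+1}(C_r)$. Local central-nilpotency ensures each $C_r$ is centrally nilpotent of some finite class $c_r$, and arranging the iteration so that $r$ eventually exceeds $c_r$ forces $\Gamma_{r+1}(C_r)=0$, whence $i=0$, a contradiction.

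For (2), I would first prove the nilpotent analogue by induction on the nilpotency class: in a centrally nilpotent brace $C$, every maximal subbrace is an ideal. If $\zeta(C)\le M$ this follows from the inductive hypothesis applied to $C/\zeta(C)$; otherwise the identity $m+z=mz$ for $z\in\zeta(C)$ shows that $M+\zeta(C)$ is a subbrace strictly containing $M$, hence equals $C$ by maximality, and direct computation --- using that elements of $\zeta(C)$ are both $\lambda$-trivial and additively central, so that multiplication, conjugation, and $\ast$-products by $m+z$ coincide with those by $m$ alone --- verifies that $M$ is normal under both group operations and $\ast$-invariant on both sides. To transfer this to the locally central-nilpotent setting, suppose for contradiction that the maximal subbrace $M$ of $B$ is not an ideal, pass to $B/M_B$ to assume $M$ is core-free, and note that $\zeta(B)\cap M$ is an ideal of $B$ contained in $M$, hence trivial. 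Since $M+\zeta(B)$ is itself a subbrace of $B$ (because $\zeta(B)$ is an ideal), provided $\zeta(B)\ne 0$ this subbrace strictly contains $M$ and so equals $B$ by maximality; the fact that $\zeta(B)$ is central then yields a brace direct decomposition $B=M\times\zeta(B)$, making $M$ a direct factor and hence an ideal --- a contradiction with the core-freeness of $M$ (unless $M=0$, in which case $M=M_B$ is already an ideal).

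The ``in particular'' conclusions follow at once: a minimal ideal is a non-zero subbrace of the trivial brace $\zeta(B)$, and minimality as an ideal reduces to minimality as an additive subgroup of $\zeta(B)$, which forces cyclic of prime order; for any maximal subbrace $M$, the brace $B/M$ has no proper non-trivial subbrace, so it is cyclic of prime order and hence abelian, whence $\partial(B)=[B,B]^B\le M$.

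The main obstacle I anticipate is in (1), where the growth of the class $c_r$ of $C_r$ must be controlled so that the iteration depth $r$ eventually overtakes it; this is precisely where local central-nilpotency (as opposed to the stronger central nilpotency or hypercentrality) becomes a delicate assumption, and one must arrange the expansions so that the whole iteration can be carried out inside a suitable finitely generated section. A secondary delicate point is in (2), where the hypothesis $\zeta(B)\ne 0$ used in the reduction does not come for free from local central-nilpotency and has to be secured by a separate argument --- for instance by invoking part (1) inside the $B$-ideal generated by a non-zero element of $M$ to produce a non-zero central element.
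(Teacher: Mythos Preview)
Your plan identifies the right obstacles but does not overcome them, and in both parts the gap is exactly where the work lies.

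\medskip

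\textbf{Part (1).} Your iterative scheme produces a tower $C_1\subset C_2\subset\cdots$ with $i\in\Gamma_{r+1}(C_r)$, but nothing prevents the nilpotency class $c_r$ from growing at least as fast as~$r$; local central-nilpotency gives no uniform bound, and the union $\bigcup_r C_r$ need not be centrally nilpotent (compare McLain-type groups). Your closing remark that ``the whole iteration can be carried out inside a suitable finitely generated section'' is the correct instinct, but you never explain how to arrange this, and the naive expansion does not do it. The paper's argument avoids iteration entirely: pick $x\in I$ and $b\in B$ with some basic commutator $c\in\{[b,x]_+,[b,x]_\cdot,x\ast b\}$ nonzero. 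Minimality of $I$ gives $I=c^B$, so in particular $x\in c^B$, and hence $x\in c^S$ for a single finitely generated $S\ni b$. Now work entirely inside $S$: intersect $J=x^S$ with the upper central series of $S$ to get a finite strict chain $0=J_0<\cdots<J_m=J$ of ideals of $S$ with central factors. Since $c$ is a commutator of $x\in J_m$ with $b\in S$, one has $c\in J_{m-1}$; but $c+J_{\ell-1}$ is central in $S/J_{\ell-1}$ wherever $c$ first appears, so $c^S+J_{\ell-1}=\langle c\rangle+J_{\ell-1}\leq J_\ell<J_m=x^S\leq c^S$, a contradiction. The single use of minimality to reverse the containment ($x\in c^S$ rather than $c\in x^S$) is the idea your scheme lacks.

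\medskip

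\textbf{Part (2).} Your reduction to the core-free case and the direct-product argument are fine when $\zeta(B)\neq0$, but you then need $\zeta(B/M_B)\neq0$, and this simply fails in general for locally centrally-nilpotent braces (already for groups, locally nilpotent does not imply nontrivial centre). Your proposed rescue via part~(1) requires a minimal ideal of $B/M_B$, which need not exist in the infinite setting. The paper takes a completely different route: if $B\ast B\leq M$, then $M/(B\ast B)$ is a maximal subgroup of the locally nilpotent group $(B,+)/(B\ast B,+)$, hence normal, and one checks $M\trianglelefteq B$. Otherwise pick $x\in B\ast B\setminus M$, write $x\in\langle L\rangle\ast\langle L\rangle$ for a finite $L$, express each $y\in L$ in terms of $x$ and finitely many elements of $M$, and let $E$ be the finitely generated subbrace they generate together with $x$. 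Inside the centrally nilpotent $E$ one finds a maximal subbrace $N\geq E\cap M$ with $x\notin N$; your own inductive argument (or the upper-central-series trick) shows $N\trianglelefteq E$, whence $E/N$ is abelian and $x\in E\ast E\leq N$, a contradiction. The point is to manufacture a finitely generated witness rather than to force $\zeta(B)\neq0$.
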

\begin{proof}
(1)\quad Suppose that $I\not\leq\zeta(B)$. Then there exist elements $b\in B$ and $x\in I$ such that $\mathcal{S}=\{[b,x]_+,[b,x]_\cdot,x\ast b\}\neq\{0\}$. Let $c\in\mathcal{S}\setminus\{0\}$. Since~$I$ is a minimal ideal of~$B$, the ideal generated by $c$ in $B$ is $I$, so there are elements $y_1,\ldots,y_n\in B$ such that~$x$ belongs to the ideal generated by $c$ in $S=\langle b,c,y_1,\ldots,y_n\rangle$. 

Let $J=x^S$. Since $S$ is centrally nilpotent, there is a finite chain $$0=J_0< J_1<\ldots< J_m=J$$ of ideals of $S$ such that $J_{i+1}/J_i\leq\zeta(S/J_i)$. Choose $\ell\in\mathbb{N}$ with $c\in J_\ell\setminus J_{\ell-1}$; clearly, $\ell\neq0,m$ because $c$ is a non-zero element of one of the following types: $[b,x]_+,[b,x]_\cdot,x\ast b$.  Now, $$x^S\leq c^S\leq c^S+J_{\ell-1}=\langle c\rangle+J_{\ell-1}\leq J_\ell<J_m=J=x^S,$$ a contradiction.

\medskip

\noindent(2)\quad Assume $M$ is not an ideal. If $B\ast B\leq M$, then $(M,+)/(B\ast B,+)$ is a maximal subgroup of the locally nilpotent group $(B,+)/(B\ast B,+)$, so it is even normal, and it follows that $M$ is an ideal of $B$, a contradiction. Thus, there exists an element $x\in B\ast B\setminus M$. Since $M$ is a maximal subbrace of~$B$, we have that $B=\langle M,x\rangle$. Then there is a finite subset $L$ of $B$ such that $x\in \langle L\rangle\ast\langle L\rangle$. For each $y\in L$, let $B_y$ be a finite subset of $M$ such that $y\in\langle B_y\cup\{x\}\rangle$. Put $$D=\langle B_y\,:\, y\in L\rangle\quad\textnormal{and}\quad E=\langle D,x\rangle,$$ so $E$ is finitely generated and $L\subseteq E$. Now, $E$ is centrally nilpotent, and \hbox{$x\not\in D\subseteq M$.} Let $N$ be a subbrace of $E$ which is maximal with respect to containing $D$ but not~$x$. Since $E=\langle D,x\rangle$, we see that $N$ is actually a maximal subbrace of $E$. Since $E$ is centrally nilpotent, there is $n\in\mathbb{N}$ such that $\zeta_n(E)\leq N$ but $\zeta_{n+1}(E)\not\leq N$. Then $N$ is a proper ideal of $\zeta_{n+1}(E)+N$ and so $N\trianglelefteq E$, since $N$ is a maximal subbrace of $E$. Therefore $E/N$ is centrally nilpotent, and so $x\in E\ast E\leq N$, a contradiction.
\end{proof}

\medskip

In \cite{BallesterEstebanPerezC-JH}, chief factors of braces are introduced and shown to play a key role in its ideal structure. Let $I$ and $J$ be ideals of a brace $B$ such that $J \leq I$. The quotient $I/J$ is said to be a \emph{chief factor} of $B$ if $I/J$ is a minimal ideal of~$B/J$. A chain $\mathcal{C}$ of ideals of $B$ is a \emph{chief series} of $B$ if $0,B\in\mathcal{C}$ and $I/J$ is a chief factor of~$B$ whenever $J<I$ are consecutive terms of $\mathcal{C}$. By Zorn's lemma, every brace has a (possibly infinite) chief series. In \cite{BallesterEstebanPerezC-JH}, a brace $B$ is proved to have a finite chief series if and only if it is {\it noetherian} (that is, every ascending chain of ideals is eventually stationary) and {\it artinian} (that is, every descending chain of ideals is eventually stationary). By The\-o\-rem~\ref{locallnilp}, every chief series of a locally centrally-nilpotent brace is a $c$-series, so we have the following result.

\begin{corolari}\label{localezetanilp}
Let $B$ be a locally centrally-nilpotent brace. Then $B$ is~\hbox{$\zeta$-nil}\-po\-tent.
\end{corolari}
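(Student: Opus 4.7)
The plan is to invoke the existence of chief series together with Theorem \ref{locallnilp}(1). By the discussion immediately preceding the statement, every brace admits a (possibly transfinite) chief series, obtained via a routine Zorn's lemma argument applied to the partially ordered set of chains of ideals of $B$ containing both $0$ and $B$. Let me fix such a chief series $\mathcal{C}$ of $B$.

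To show that $\mathcal{C}$ is in fact a $c$-series of $B$, I need to verify that whenever $J<I$ are consecutive terms of $\mathcal{C}$, one has $I/J\leq\zeta(B/J)$. By definition of chief factor, $I/J$ is a minimal ideal of the quotient $B/J$. Since local central-nilpotency is inherited by quotients (as noted in the paragraph following the definition of locally centrally-nilpotent braces, quotients of finitely generated subbraces of $B/J$ are centrally nilpotent, being images of finitely generated subbraces of $B$), $B/J$ is itself locally centrally-nilpotent. Applying Theorem \ref{locallnilp}(1) to $B/J$ yields $I/J\leq\zeta(B/J)$, which is exactly the centrality condition required on consecutive factors.

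Thus $\mathcal{C}$ is a $c$-series of $B$, and so $B$ is $\zeta$-nilpotent by definition. There is no real obstacle here: all the substance is absorbed into the preceding Theorem \ref{locallnilp}, which provides the key fact that minimal ideals of a locally centrally-nilpotent brace lie in the centre; the corollary is then just the observation that this hypothesis transfers to every quotient along a chief series.
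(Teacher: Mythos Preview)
Your proposal is correct and follows exactly the same approach as the paper: use the existence of a chief series (via Zorn's lemma) and apply Theorem~\ref{locallnilp}(1) in each quotient $B/J$ to see that every chief factor is central, so the chief series is a $c$-series. The paper's proof is the single sentence preceding the corollary, and your write-up simply spells out the same argument with a bit more detail (including the observation that local central-nilpotency passes to quotients).
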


\begin{nota}
{\rm It should be noted that the proof of Theorem \ref{locallnilp} (1) proves much more than we stated. In fact, let $\mathfrak{Z}$ be the class of all braces in which every chief factor is central. Moreover, let {\bf L}$\mathfrak{Z}$ be the class of all braces in which every finite subset $F$ is contained in a subbrace $C_F\in\mathfrak{Z}$. The proof of~The\-o\-rem~\ref{locallnilp} (1) can be modified to show that {\bf L}$\mathfrak{Z}=\mathfrak{Z}$.

More in detail, using the notation of the first half of the proof of The\-o\-rem~\ref{locallnilp}~(1), we get that $S$ is contained in a subbrace $T\in\mathfrak{Z}$. Let $J=x^T$.

Since $[J,T]^T$ is an ideal of $T$ containing $c$, we have that $x\in c^T\leq[J,T]^T$ and so $J=x^T=[J,T]^T$. Finally, let $M$ be a maximal ideal of $T$ contained in~$J$ and such that $x\not\in M$. Then $J/M$ is a chief factor of $T$ and so~\hbox{$[J,T]^T\leq M$,} a contradiction.}
\end{nota}

\medskip

It follows from Corollary \ref{localezetanilp} that any non-zero ideal of a locally centrally-nilpotent brace contains a (non-zero) central factor of the whole brace. In case of a hypercentral brace we can say more.

\begin{lema}\label{dontworry}
Let $B$ be a hypercentral brace. If $I$ is any non-zero ideal of~$B$, then $I\cap\zeta(B)\neq0$.
\end{lema}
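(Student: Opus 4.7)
The plan is to mimic the classical proof of the analogous result for hypercentral groups, namely by exploiting the upper central series of $B$ and picking the smallest ordinal at which the intersection with $I$ becomes non-zero.

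More precisely, since $B$ is hypercentral, by Proposition~\ref{prop:centre-commut}(2) there is an ordinal $\mu$ with $B=\zeta_\mu(B)$, so $I\cap\zeta_\mu(B)=I\neq 0$. Let $\alpha$ be the smallest ordinal such that $I\cap\zeta_\alpha(B)\neq 0$. A quick check rules out that $\alpha$ is a limit ordinal: indeed, in that case $\zeta_\alpha(B)=\bigcup_{\beta<\alpha}\zeta_\beta(B)$ and hence $I\cap\zeta_\alpha(B)=\bigcup_{\beta<\alpha}(I\cap\zeta_\beta(B))=0$, contradicting the choice of $\alpha$. Therefore $\alpha=\beta+1$ for some ordinal $\beta$, with $I\cap\zeta_\beta(B)=0$.

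Now pick any non-zero element $x\in I\cap\zeta_{\beta+1}(B)$. By the definition of the upper central series, $x+\zeta_\beta(B)\in\zeta(B/\zeta_\beta(B))$, so for every $b\in B$ the elements $[x,b]_+$, $[x,b]_\cdot$, $x\ast b$ and $b\ast x$ all lie in $\zeta_\beta(B)$. On the other hand, because $I$ is an ideal, these four elements also lie in $I$: additive and multiplicative normality take care of the two commutators, while $B\ast I\subseteq I$ and $I\ast B\subseteq I$ take care of the two star products. Consequently, each of them belongs to $I\cap\zeta_\beta(B)=0$, which by the characterisation of $\zeta(B)$ means precisely that $x\in\zeta(B)$. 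Hence $0\neq x\in I\cap\zeta(B)$, as required.

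There is no real obstacle here, since the upper central series has already been defined, the centre has the explicit description $\zeta(B)=\{a\in B\mid a+b=b+a=ab=ba\ \forall b\in B\}$, and ideals are closed under all four relevant operations on one side with elements of $B$; the only care needed is handling the limit-ordinal case in the transfinite minimality argument.
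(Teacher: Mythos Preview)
Your proof is correct and follows essentially the same approach as the paper: pick the least ordinal $\alpha$ with $I\cap\zeta_\alpha(B)\neq 0$, observe it is a successor, and conclude that the intersection lies in $\zeta(B)$. The only cosmetic difference is that the paper phrases the last step via the commutator of ideals, observing $[I\cap\zeta_\alpha(B),B]^B\leq (I\cap\zeta_\alpha(B))\cap\zeta_{\alpha-1}(B)=0$ and invoking Proposition~\ref{prop:centre-commut}(1), whereas you verify the same thing element-wise.
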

\begin{proof}
Let $\alpha$ be the smallest ordinal number such that $J=I\cap\zeta_\alpha(B)\neq0$. Then $\alpha$ is successor and $I\cap \zeta_{\alpha-1}(B)=0$. Now, $[J,B]^B\leq J\cap\zeta_{\alpha-1}(B)=0$ and so $J\leq\zeta(B)$.
\end{proof}

\begin{corolari}\label{cor:caract_nil-chieffact}
Let $B$ be a brace having a finite chief series \textnormal(resp. an ascending chief series\textnormal) $\mathcal{I}$. Then $B$ is centrally nilpotent \textnormal(resp. hypercentral\textnormal) if and only if every chief factor of $\mathcal{I}$ is central.
\end{corolari}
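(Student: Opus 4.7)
The plan is to prove both implications separately, with the ``only if'' direction being the main content.

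For the ``if'' direction, suppose every chief factor of $\mathcal{I}$ is central. Then, by definition, $\mathcal{I}$ is a $c$-series of $B$ (its consecutive factors $I/J$ all satisfy $I/J\leq\zeta(B/J)$). In the finite case this immediately yields that $B$ is centrally nilpotent, while in the ascending case $\mathcal{I}$ is an ascending $c$-series with top term $B$, so $B$ is hypercentral by definition.

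For the ``only if'' direction, assume $B$ is centrally nilpotent (resp.\ hypercentral) and let $J<I$ be two consecutive terms of $\mathcal{I}$, so that $I/J$ is a non-zero minimal ideal of $B/J$. Since quotients of centrally nilpotent (resp.\ hypercentral) braces are again centrally nilpotent (resp.\ hypercentral), and every centrally nilpotent brace is in particular hypercentral, the quotient $B/J$ is hypercentral. Lemma~\ref{dontworry} applied to $B/J$ and the non-zero ideal $I/J$ gives $(I/J)\cap\zeta(B/J)\neq 0$. But this intersection is itself an ideal of $B/J$ contained in the minimal ideal $I/J$, and so it must equal $I/J$; that is, $I/J\leq\zeta(B/J)$, which is precisely the statement that the chief factor is central.

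There is no real obstacle: Lemma~\ref{dontworry} is doing all the heavy lifting, and minimality of the chief factor converts the non-trivial intersection with the centre into full containment. The argument is moreover uniform in the finite and ascending settings, so no separate treatment of the two cases is needed beyond the definitional remarks in the first paragraph.
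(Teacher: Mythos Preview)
Your proof is correct and follows the same approach the paper intends: the corollary is placed immediately after Lemma~\ref{dontworry} precisely because the ``only if'' direction is obtained by applying that lemma to the quotient $B/J$ and using minimality of the chief factor to upgrade the non-trivial intersection with the centre to full containment, exactly as you do. The ``if'' direction is indeed immediate from the definitions.
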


Our next two subsections deal with the torsion theory of locally centrally-nilpotent braces and with the problem of defining a suitable index for subbraces. Before delving into them, we note that some important results for nilpotent groups do not hold for braces. 

\begin{itemize}
    \item Bearing in mind the normaliser condition for nilpotent groups, the idealiser of a subbrace is introduced in \cite{JespersVanAntwerpenVendramin22}: given a subbrace $S$ of a brace~$B$, the idealiser of $S$ is defined as the largest subbrace $N$ of~$B$ such that $S$ is an ideal of $N$. It is then stated that every subideal is properly contained in its idealiser (see Section \ref{sec:index} for the definition of subideal).~Example~\ref{ex:noidealiser} in Section~\ref{wex} shows that the idealiser of a subbrace does not exist in general. We note however that if $C$ is a subbrace of a brace $B$, then one can define the largest strong left ideal $N_B(C)$ of $B$ additively and multiplicatively normalising $B$ and such that $\lambda_x(C)=C$ for every~\hbox{$x\in N_B(C)$} --- but such a strong left ideal need not to contain~$C$.
    \item Example \ref{ex:nofitting} shows that there is no analogue of Fitting theorem for centrally nilpotent ideals.
    \item Example \ref{ex:triv_nosubnilid} shows that an abelian subideal need not be contained in a centrally nilpotent ideal.
    \item The ideal structure of the brace listed as \texttt{SmallBrace(32, 24003)} in the \textsf{YangBaxter} library \cite{VendraminKonovalov22-YangBaxter-0.10.2} for \textsf{GAP} \cite{GAP4-12-2} is described in \cite{BallesterEstebanJimenezPerezC-solublebraces}. This brace~$B$ has only a unique maximal subbrace $I$, which is also its only non-zero proper ideal. Moreover, $\partial(B) = I$. Nevertheless, $B$ is not centrally nilpotent as it is not even soluble. This shows that a finite brace whose maximal subbraces are ideals need not be soluble. The same example shows that a finite brace whose subbraces are subideals need not be centrally nilpotent (see Example \ref{ex:D} for more details), although an easy induction shows that they are at least weakly soluble in the sense explained in~\cite{BallesterEstebanJimenezPerezC-solublebraces}.
\end{itemize}

\subsection{Torsion theory}\label{sec:tor}

\noindent The aim of this subsection is to establish a torsion theory for locally centrally-nilpotent braces. We start with some definitions.

\begin{definicio}
Let $B$ be a brace. The subset of all periodic elements of~$(B,+)$ is denoted by $T_+(B)$, while that of all periodic elements of $(B,\cdot)$ is denoted by $T_\cdot(B)$. 
Moreover:
\begin{itemize}
    \item An element $b$ of $B$ is {\it periodic} if $\langle b\rangle$ is finite. The {\it order} of $b$ is $|\langle b\rangle|$. If~$\pi$ is any set of prime numbers, then  $b$ is a {\it $\pi$-element} if its order is a~\hbox{$\pi$-number.} A {\it $\pi$-subbrace} is just a subbrace containing only $\pi$-elements. Finally, $B$ is {\it periodic} if every element of $B$ is periodic.
    \item $B$ is {\it torsion-free} if every element $b\in B$ is either zero or is non-periodic.
    \item $B$ is {\it locally finite} if every finitely generated subbrace of $B$ is finite.
    \item $B$ has {\it finite exponent} $n$ if $B$ is periodic and $n$ is the smallest positive integer such that $b^n=nb=0$ for all $b\in B$.
\end{itemize}
\end{definicio}

\medskip

Clearly, every locally finite brace is periodic but the converse does not hold. The following result shows that in the context of locally centrally-nilpotent braces we can precisely identify the set of all periodic elements of~$B$. 

\begin{teorema}
Let $B$ be a locally centrally-nilpotent brace. Then:
\begin{itemize}
    \item[\textnormal{(1)}] $T_+(B)=T_\cdot(B)$.
    \item[\textnormal{(2)}] $T_+(B)$ is an ideal of $B$.
    \item[\textnormal{(3)}] $T_+(B/T_+(B))=0$.
    \item[\textnormal{(4)}] If $B$ is periodic, then $B$ is locally finite.
    \item[\textnormal{(5)}] $B$ is locally finite\!$\iff$\! $(B,+)$ is locally \hbox{finite\!$\iff$\!$(B,\cdot)$ is locally finite.}
\end{itemize}
\end{teorema}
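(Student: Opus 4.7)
The plan is to reduce everything to finitely generated subbraces, where centrally nilpotent braces have both $(C,+)$ and $(C,\cdot)$ finitely generated nilpotent. The latter is an easy induction on the class: each factor $\zeta_{i+1}/\zeta_i$ of the brace upper central series is a trivial brace (so its additive and multiplicative groups coincide), and finite generation lifts through central extensions. In particular, for finitely generated centrally nilpotent $C$, periodicity of either $(C,+)$ or $(C,\cdot)$ is equivalent to $C$ being finite.

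For (1), I would first show that $T_+(B)$ is a subbrace. Additive closure is clear because $(\langle a,b\rangle,+)$ is nilpotent for any $a,b\in T_+(B)$, so its torsion is an additive subgroup; multiplicative closure uses $ab=a+\lambda_a(b)$ since $\lambda_a$ is an additive automorphism preserving additive order; closure under inverses follows from $\lambda_a(a^{-1})=-a$. Thus for $b\in T_+(B)$, $T_+(\langle b\rangle)$ is a subbrace containing $b$ and hence equals $\langle b\rangle$, so $(\langle b\rangle,+)$ is periodic, hence finite, and $b\in T_\cdot(B)$. This yields $T_+(B)\subseteq T_\cdot(B)$.

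The reverse inclusion and (2)--(3) hinge on the following key lemma, which I would prove by induction on the nilpotency class of the group: if $G$ is a torsion-free nilpotent group and $\phi\in\Aut(G)$ has finite order $k$ with the iterated operator $(\phi-\operatorname{id})^n$ vanishing for some $n$, then $\phi=\operatorname{id}$. The inductive step uses that $\phi$ fixes $Z(G)$ pointwise (by the abelian base case, since over $\mathbb{Q}$ the eigenvalues of $\phi$ must be both roots of unity and equal to $1$), so it descends to the class-$(c-1)$ quotient, and lifting one gets $\phi(g)-g\in Z(G)$ for all $g$; then $\phi^k(g)=g+k(\phi(g)-g)=g$ combined with torsion-freeness of $Z(G)$ forces $\phi(g)=g$. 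In a centrally nilpotent brace $C$ of class $n$, the iterated star $c\ast^n x$ lies in $\Gamma_{n+1}(C)=0$, so $\lambda_c-\operatorname{id}$ is iteratively vanishing on $(C,+)$; if moreover $c\in T_\cdot(C)$ then $\lambda_c$ has finite order, since $\lambda$ is a group homomorphism and $\lambda_{c^k}=\lambda_0=\operatorname{id}$. Applying the key lemma to the torsion-free nilpotent quotient $(C,+)/T_+(C)$ forces the induced $\bar\lambda_c$ to be the identity, so $c\ast x=\lambda_c(x)-x\in T_+(C)$ for every $x\in C$ and every $c\in T_\cdot(C)$. Taking $c\in T_+(C)\subseteq T_\cdot(C)$ gives $T_+(C)\ast C\subseteq T_+(C)$, and combined with the strong left ideal property (automatic because additive conjugation preserves additive order) this proves (2). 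Now $B/T_+(B)$ has torsion-free additive group, and applying the same observation inside the quotient rules out any non-zero multiplicatively torsion element; this delivers (3) and the inclusion $T_\cdot(B)\subseteq T_+(B)$, completing (1).

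Parts (4) and (5) are then formal. If $B$ is periodic then every finitely generated subbrace $C$ equals $T_+(C)$, so $(C,+)$ is a finitely generated periodic nilpotent group and hence finite. For (5), one direction is trivial since every finitely generated subgroup of $(B,+)$ or $(B,\cdot)$ is contained in a finitely generated subbrace; the converse uses the opening reduction that $(C,+)$ and $(C,\cdot)$ are finitely generated for every finitely generated subbrace $C$, so local finiteness of either additive or multiplicative group forces $C$ to be finite. The technical heart of the argument is the key lemma together with the translation of the centrally nilpotent condition into iterated unipotency of $\lambda_c$; once these are in hand, the rest of (1)--(5) is bookkeeping.
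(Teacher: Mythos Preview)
Your argument is correct and is substantially more self-contained than the paper's, which simply invokes Proposition~4.2 of \cite{JespersVanAntwerpenVendramin22} for (1)--(3) and Theorem~3.7 of \cite{Tr23} for (4)--(5). The reduction to finitely generated subbraces is the same in both treatments, but you supply the engine that the paper outsources: the ``key lemma'' that a finite-order, iteratively unipotent automorphism of a torsion-free nilpotent group must be trivial. Your translation of central nilpotency of class $n$ into the vanishing of the $n$-fold iterate $c\ast(c\ast\cdots(c\ast x))$ is exactly the right bridge, and the induction on the class in the key lemma (using that upper central factors of a torsion-free nilpotent group are torsion-free, via unique extraction of roots) is sound.

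One small imprecision in the opening reduction: the phrase ``finite generation lifts through central extensions'' is not literally true (take $G$ abelian with $Z(G)=G$ not finitely generated, so $G/Z(G)=0$ is finitely generated while $G$ is not). What you actually need, and what is easy, goes via the \emph{lower} central series instead: $C/\Gamma_2(C)$ is a trivial brace generated as a brace by the images of the brace generators of $C$, hence $(C/\Gamma_2(C),+)$ is finitely generated; since $[C,C]_+\le\Gamma_2(C)$, the abelianisation $(C,+)/[C,C]_+$ is finitely generated, and as $(C,+)$ is nilpotent this forces $(C,+)$ to be finitely generated. The same argument gives $(C,\cdot)$ finitely generated. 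This is the content of the result from \cite{Tr23} that the paper cites, and with this patch your proof goes through cleanly.
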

\begin{proof}
The proof of (1)--(3) is an easy consequence of \cite{JespersVanAntwerpenVendramin22}, Proposition 4.2. Let us prove (4). Assume $B$ is periodic and finitely generated. Then $(B,+)$ is a periodic nilpotent group. Moreover, by Theorem 3.7 of \cite{Tr23}, $(B,+)$ is also finitely generated. Thus $(B,+)$ is finite and (4) is proved. Finally, (5) is an obvious consequence of \cite{Tr23}, Theorem 3.7.
\end{proof}

\medskip

Let $B$ be a brace, and let $p$ be a prime. The {\it Sylow $p$-subbrace} of $B$ is just a maximal element of the set of all its $p$-subbraces with respect to the inclusion. Our next result shows that the Sylow subbraces of a locally centrally-nilpotent brace are ideals and that they coincide with the additive/multiplicative Sylow subgroups.

\begin{teorema}
\label{prop:car_sylideals}
Let $B$ be a locally finite brace. Then, $B$ is locally centrally-nilpotent if and only if, for every prime $p$, $\Syl_p(B,+) = \Syl_p(B,\cdot) = \{B_p\}$, $B_p$ is locally centrally-nilpotent and $B$ is the direct product of the $B_p$'s. 
\end{teorema}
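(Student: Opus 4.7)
The plan is to establish the two implications separately. The converse direction is immediate from the remark in Section~\ref{sec:nilp} that restricted direct products of locally centrally-nilpotent braces are locally centrally-nilpotent, so if each $B_p$ is locally centrally-nilpotent and $B=\bigoplus_p B_p$, then so is $B$.

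For the direct implication, assume $B$ is locally finite and locally centrally-nilpotent. I would first reduce the problem to the finite case: every finitely generated subbrace $F$ of $B$ is finite and centrally nilpotent, and by Proposition~\ref{prop:centre-commut}(3) both $(F,+)$ and $(F,\cdot)$ are finite nilpotent groups. Consequently, each of them decomposes as the direct product of its unique Sylow $p$-subgroups, which I denote $F_p^+$ and $F_p^\cdot$.

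The heart of the argument is showing $F_p^+=F_p^\cdot$ as subsets of $F$; I would proceed by induction on the nilpotency class $c$ of $F$. When $c\leq1$, the brace $F=\zeta(F)$ is trivial, so the two group operations coincide and the claim is immediate. For $c>1$, one applies the inductive hypothesis to $\zeta(F)$ (trivial, so the claim is trivially satisfied) and to $F/\zeta(F)$ (of class $c-1$), and then compares the additive and multiplicative orders of a generic $x\in F$ by looking at its image in $F/\zeta(F)$ together with the elements of $\zeta(F)$ obtained by raising $x$ to suitable additive multiples and multiplicative powers, exploiting that additive and multiplicative orders agree in trivial braces. Once $F_p:=F_p^+=F_p^\cdot$ is established, this set is characteristic in $(F,+)$ (hence $\lambda$-invariant, so a left ideal) and characteristic in $(F,\cdot)$ (hence multiplicatively normal), so $F_p$ is an ideal of $F$; moreover $F_p\ast F_q\subseteq F_p\cap F_q=0$ for distinct primes $p,q$, producing the brace decomposition $F=\bigoplus_p F_p$.

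To conclude, for each prime $p$ I would define $B_p$ as the union of the $F_p$'s taken over all finite subbraces $F$ of $B$. Local finiteness ensures $B_p$ is a subbrace, and it must be the unique additive and the unique multiplicative Sylow $p$-subgroup of $B$; local central-nilpotency of $B_p$ is inherited from $B$; and the decomposition $B=\bigoplus_p B_p$ (restricted direct product) then follows by applying the finite-case decomposition to every finitely generated subbrace. The main challenge lies in the inductive step in the finite case, where one must carefully compare additive and multiplicative orders of elements across a non-trivial centre; everything else is a rather standard reduction through finitely generated subbraces.
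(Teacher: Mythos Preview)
Your proposal is correct in outline, and the reduction to the finite case together with the passage back to $B$ via directed unions of finite subbraces is carried out just as in the paper. However, the core step in the finite case is handled very differently, and your route is longer than necessary.

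The paper observes directly that the additive Sylow $p$-subgroup $F_p^+$ is characteristic in $(F,+)$, hence $\lambda$-invariant, and therefore already a subbrace of $F$. Thus $(F_p^+,\cdot)$ is a subgroup of $(F,\cdot)$ of $p$-power cardinality, which forces $F_p^+\subseteq F_p^\cdot$; equality of cardinalities then gives $F_p^+=F_p^\cdot$ immediately, and the ideal property follows since $F_p$ is characteristic in both group structures. No induction on the nilpotency class is needed. You in fact invoke the very same $\lambda$-invariance observation, but only \emph{after} your inductive argument; moving it to the beginning makes the induction superfluous.

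Your inductive step, as written, is also somewhat underspecified: comparing the additive order of $x$ with the multiplicative order via $x^{p^k}\in\zeta(F)$ requires controlling the \emph{additive} order of the multiplicative power $x^{p^k}$ (or, symmetrically, the multiplicative order of the additive multiple $p^\ell x$), and this link is not automatic without already knowing some subbrace structure. The argument can be completed, but the bookkeeping is entirely avoidable given the one-line cardinality argument above.
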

\begin{proof}
Only one direction is in doubt. Since $B$ is locally finite, we may assume~$B$ is finite and centrally nilpotent. Let $p$ be a prime. Since both $(B,+)$ and $(B, \cdot)$ are nilpotent groups, there exist Sylow $p$-subgroups $B_p \unlhd (B,+)$ and $\bar{B}_p \unlhd (B,\cdot)$. Observe that $B_p$ is also $\lambda$-invariant, as it is a characteristic subgroup of $(B,+)$. Therefore, $B_p = \bar{B}_p$ is an ideal of $B$.
\end{proof}

With respect to the definition of order of an element, we note the following interesting fact.

\begin{proposicio}
Let $B$ be a brace whose additive and multiplicative groups are cyclic. Then there is $x\in B$ which is a generator of both $(B,+)$ and $(B,\cdot)$.
\end{proposicio}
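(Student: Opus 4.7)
My plan is to treat the infinite and finite cases separately; both rely on the identity $ab=a+\lambda_a(b)$ and on the interplay between the $\lambda$-action and the cyclic structure of the two groups.

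\emph{Infinite case.} Suppose $(B,+)\cong\mathbb{Z}\cong(B,\cdot)$. Then $\Aut(B,+)\cong\{\pm\mathrm{id}\}$, so the homomorphism $\lambda\colon(B,\cdot)\to\Aut(B,+)$ has image of size at most two. Let $g$ be a multiplicative generator of $B$; if $\lambda_g=-\mathrm{id}$, then $g\cdot g=g+\lambda_g(g)=g-g=0$, contradicting that $g$ has infinite multiplicative order. Hence $\lambda\equiv\mathrm{id}$, so $B$ is the trivial brace, and any additive generator of $B$ is automatically a multiplicative one.

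\emph{Finite case.} Set $n=|B|$. The key claim is that, for every prime $p\mid n$, the unique additive subgroup $pB$ of index $p$ coincides as a set with the unique multiplicative subgroup $(B,\cdot)^p$ of index $p$. Granted this, the non-generators of $(B,+)$ and those of $(B,\cdot)$ are both equal to $\bigcup_{p\mid n}pB$, so the two sets of generators agree, which more than proves the statement. To establish the claim I would show that $pB$ is an ideal of $B$. As a characteristic subgroup of the cyclic group $(B,+)$, $pB$ is $\lambda$-invariant. Using $ab=a+\lambda_a(b)$ together with the $\lambda$-invariance of $pB$, one checks that $pB$ is closed under the multiplicative operation (closure under multiplicative inverses follows from $a^{-1}=-\lambda_a^{-1}(a)$). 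Thus $pB$ is a subbrace, and in fact a left ideal; being normal in the abelian group $(B,\cdot)$, it is then an ideal of $B$. Since $pB$ is a subgroup of the cyclic group $(B,\cdot)$ of order $n/p$, and $(B,\cdot)^p$ is the unique multiplicative subgroup of that order, we conclude $pB=(B,\cdot)^p$.

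The main technical step, though not deep, is the verification that $pB$ is multiplicatively closed; it exploits essentially that every subgroup of a finite cyclic group is characteristic, which is what makes the $\lambda$-invariance automatic and allows the whole argument to go through without any additional hypothesis on the brace.
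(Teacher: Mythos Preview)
Your proof is correct and follows a genuinely different route from the paper's. The paper handles the infinite case by quoting an external classification result (Theorem~4.6 of \cite{stefan}), then for the finite case shows $\zeta(B)\neq 0$ (since otherwise $(B,\cdot)$ would embed in $\Aut(B,+)$), deduces that $B$ is centrally nilpotent, reduces to prime-power order via the Sylow decomposition, and finishes by induction through a central subbrace of order~$p$. Your argument is more elementary and self-contained: the infinite case is settled in two lines by analysing $\lambda$, and in the finite case you bypass central nilpotency and induction entirely by observing that every maximal additive subgroup $pB$ is characteristic in $(B,+)$, hence $\lambda$-invariant, hence a subbrace, and therefore the unique multiplicative subgroup of the same order. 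This yields the stronger conclusion that the additive and multiplicative generators of $B$ are \emph{exactly} the same elements, not merely that some common generator exists. The paper's approach has the virtue of illustrating the machinery developed in the surrounding sections; yours has the virtue of being shorter and needing no prerequisites. One minor remark: once you have shown $pB$ is a multiplicative subgroup of order $n/p$, the identification $pB=(B,\cdot)^p$ is already enough for your purpose, so the upgrade to ``ideal'' is not actually used (though it is correct, since both group operations are abelian here).
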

\begin{proof}
By Theorem 4.6 of \cite{stefan}, we may assume $B$ is finite. If $\operatorname{Ker}(\lambda)=0$, then $(B,\cdot)$ embeds into $\operatorname{Aut}(B,+)$, a contradiction. Thus, $\zeta(B)\neq0$. Iterating this argument, we see that $B$ is centrally nilpotent, so $B$ factorizes into the direct product of its Sylow $p$-subgroups. It is therefore possible to assume that $B$ has prime power order $p^n$.

Let $I$ be a subbrace of $\zeta(B)$ of order $p$. By induction there is an element $x\in B$ such that $x+I$ is a both a generator of $(B/I,+)$ and $(B/I,\cdot)$. If \hbox{$\langle x^{p^{n-1}}\rangle_\cdot\cap I=0$,} then  $(B,\cdot)$ is not cyclic, a contradiction. Thus, $\langle x^{p^{n-1}}\rangle_\cdot=I$ and $x$ is a generator of $(B,\cdot)$. Similarly, $x$ is a generator of $(B,+)$.
\end{proof}

\medskip

Our next result is a huge generalisation of \cite{JespersVanAntwerpenVendramin22}, Lem\-ma~4.1. In order to state it, we need the following definition.

\begin{definicio}\label{free}
{\rm Let $B$ be a brace, and let $\pi$ be a set of prime numbers. We say that $B$ is {\it $\pi$-free} if it does not contain $\pi$-elements.}
\end{definicio}

\medskip

Obviously, a trivial brace $B$ is $\pi$-free if and only if  $(B,+)$ and/or $(B,\cdot)$ are \hbox{$\pi$-free} as groups.

\begin{teorema}\label{moltobello}
Let $B$ be a brace and let $\pi$ be a set of primes. If~$\zeta(B)$ is~\hbox{$\pi$-free,} then each factor of the upper central series of $B$, and therefore the hypercentre of $B$, is $\pi$-free.
\end{teorema}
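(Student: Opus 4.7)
My plan is to proceed by transfinite induction on $\alpha$ to show that every successive factor $\zeta_{\alpha+1}(B)/\zeta_\alpha(B)$ of the upper central series is $\pi$-free. The conclusion for the hypercentre will then follow, since $\pi$-freeness is preserved under ideal extensions and under directed unions of $\pi$-free ideals, so $\zeta_\alpha(B)$, and in particular $\overline{\zeta}(B)$, will be $\pi$-free for every ordinal $\alpha$.

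The crucial ingredient is the case $\alpha=1$, which I would isolate as a separate statement: \emph{for any brace $A$ with $\zeta(A)$ $\pi$-free, the factor $\zeta_2(A)/\zeta(A)$ is $\pi$-free}. To establish this, I fix $x\in\zeta_2(A)$ whose image $\bar x\in\zeta(A/\zeta(A))$ is a $\pi$-element, so that $n_+x\in\zeta(A)$ and $x^{n_\cdot}\in\zeta(A)$ for certain $\pi$-numbers $n_+, n_\cdot$. Since $x\in\zeta_2(A)$, for each $b\in A$ the three elements $[x,b]_+$, $[x,b]_\cdot$ and $x\ast b$ lie in $\zeta(A)$, which is abelian as a brace. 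Using the additive and multiplicative commutator identities (valid because the outputs are central in the respective groups), together with Eq.~\eqref{eq:identitat_astprod} and the fact that $u\ast z=\lambda_u(z)-z=0$ for every $z\in\zeta(A)$, the three maps
\[
x\mapsto[x,b]_+,\qquad x\mapsto[x,b]_\cdot,\qquad x\mapsto x\ast b
\]
turn out to be group homomorphisms from $\zeta_2(A)$ into the abelian brace $\zeta(A)$ (additive for the first map, multiplicative for the second, and multiplicative-to-additive for the third; all three target the same abelian group because $(\zeta(A),+)=(\zeta(A),\cdot)$). The $\pi$-order of $\bar x$ is then transported through these homomorphisms to make $[x,b]_+$, $[x,b]_\cdot$ and $x\ast b$ all $\pi$-torsion in $\zeta(A)$; the $\pi$-freeness of $\zeta(A)$ then forces each to vanish, so $x\in\zeta(A)$.

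For the inductive step, a successor ordinal $\alpha=\gamma+1$ is dealt with at once by applying the base case to the brace $B/\zeta_\gamma(B)$, whose centre $\zeta_{\gamma+1}(B)/\zeta_\gamma(B)$ is $\pi$-free by the inductive hypothesis; this yields that $\zeta_{\gamma+2}(B)/\zeta_{\gamma+1}(B)$ is $\pi$-free. For a limit ordinal $\lambda$, the ideal $\zeta_\lambda(B)=\bigcup_{\beta<\lambda}\zeta_\beta(B)$ is $\pi$-free as a directed union of $\pi$-free ideals. The main obstacle I foresee is the successor-of-limit factor $\zeta_{\lambda+1}(B)/\zeta_\lambda(B)$, where the reduction via $B/\zeta_{\lambda-1}(B)$ is no longer available. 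My proposed approach here is to observe that the $\pi$-torsion subgroup $T$ of $\zeta(B/\zeta_\lambda(B))$ is automatically an ideal of $B/\zeta_\lambda(B)$: it sits inside a centre, hence is $\lambda$-invariant, normal in both group structures, and its star products with $B/\zeta_\lambda(B)$ on either side vanish. Lifting $T$ back to an ideal $I$ of $B$ with $\zeta_\lambda(B)\leq I\leq\zeta_{\lambda+1}(B)$, and noting that $\zeta_{\lambda+1}(B)$ is hypercentral, Lemma~\ref{dontworry} applied inside it produces a non-zero element of $I$ central in $\zeta_{\lambda+1}(B)$; a descent through the $\pi$-free factors $\zeta_\beta(B)/\zeta_{\beta-1}(B)$ for $\beta<\lambda$ should then force this element to lie in $\zeta(B)$, contradicting the hypothesis and yielding $T=0$.
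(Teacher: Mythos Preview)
Your base case is correct and in fact cleaner than the paper's: you bypass the external reference to \cite{colazzo} (invoked there to obtain $x^m=mx$) by noting directly that the three maps $x\mapsto[x,b]_+$, $x\mapsto[x,b]_\cdot$, $x\mapsto x\ast b$ are group homomorphisms $\zeta_2(B)\to\zeta(B)$ once the target lies in the centre. The successor-of-successor step then follows exactly as you say.

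The genuine gap is the successor-of-limit step. Your appeal to Lemma~\ref{dontworry} is vacuous: the lifted ideal $I$ already contains $\zeta_\lambda(B)\supseteq\zeta(B)$, so $I\cap\zeta\bigl(\zeta_{\lambda+1}(B)\bigr)\neq 0$ holds trivially and carries no information. More importantly, the element produced has no reason to be a $\pi$-element, and your unspecified ``descent'' cannot manufacture one: an extension of a $\pi$-free ideal by a $\pi$-torsion quotient need contain no nonzero $\pi$-element whatsoever (consider $2\mathbb{Z}\leq\mathbb{Z}$ as trivial braces with $\pi=\{2\}$). So no contradiction with the $\pi$-freeness of $\zeta(B)$ arises along this route.

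The fix---and this is what the paper does---is to recycle your own homomorphism argument at a well-chosen level \emph{below} $\lambda$ rather than at $\lambda$ itself. Take $x\in\zeta_{\lambda+1}(B)\setminus\zeta_\lambda(B)$ with $x^m\in\zeta_\lambda(B)$ for a $\pi$-number $m$, and pick $\beta<\lambda$ with $x^m\in\zeta_{\beta+1}(B)$. Since $x\notin\zeta_{\beta+1}(B)$, there exist $b\in B$ and $c\in\{x\ast b,\,[x,b]_+,\,[x,b]_\cdot\}$ with $c\notin\zeta_\beta(B)$; say $c\in\zeta_{\gamma+1}(B)\setminus\zeta_\gamma(B)$ with $\beta\leq\gamma<\lambda$. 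Because $\bar c$ lies in $\zeta(B/\zeta_\gamma(B))$, your homomorphism computation (now carried out modulo $\zeta_\gamma(B)$) gives $c^m\equiv x^m\ast b$ (resp.\ $[x^m,b]_+$, $[x^m,b]_\cdot$) $\pmod{\zeta_\gamma(B)}$, and $x^m\in\zeta_{\beta+1}(B)$ forces the right-hand side into $\zeta_\beta(B)\subseteq\zeta_\gamma(B)$. Thus $\bar c$ is a nonzero $\pi$-element of $\zeta_{\gamma+1}(B)/\zeta_\gamma(B)$, contradicting the inductive hypothesis.
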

\begin{proof}
Suppose the theorem is false and let $\alpha$ be the first ordinal such that $\zeta_{\alpha+1}(B)/\zeta_\alpha(B)$ is not $\pi$-free; in particular, there is $x\in \zeta_{\alpha+1}(B)\setminus\zeta_\alpha(B)$ such that $x^m\in\zeta_\alpha(B)$ for some positive $\pi$-number $m$. We divide the proof in two parts according to $\alpha$ being limit or not.

Suppose first $\alpha$ is limit. Then $x^m\in\zeta_{\beta+1}(B)\setminus\zeta_\beta(B)$ for some $\beta<\alpha$. Since $x\notin\zeta_\alpha(B)$, there is $b\in B$ and $\gamma\geq\beta$ such that one of the elements $x\ast b$, $[x,b]_+$, $[x,b]_\cdot$ belongs to $\zeta_{\gamma+1}(B)\setminus\zeta_\gamma(B)$; call $c$ such an element. Assume $c=x\ast b$. Then $$(x\ast b)^m\equiv x^m\ast b\quad\textnormal{(mod $\zeta_{\gamma}(B)$)}$$ and so $x^m\ast b\in\zeta_\beta(B)\leq\zeta_\gamma(B)$. Therefore $(x\ast b)^m\in\zeta_\gamma(B)$. But $x\ast b\in\zeta_{\gamma+1}(B)$ and $\zeta_{\gamma+1}(B)/\zeta_\gamma(B)$ is $\pi$-free, so $c = x\ast b\in\zeta_\gamma(B)$, a contradiction.
% canvi
Similarly, we deal with the cases in which $c=[b,x]_\cdot$ and $c=[b,x]_+$.

Suppose now that $\alpha$ is successor; in this case, we may assume $\alpha=1$, so $x\in\zeta_2(B)\setminus\zeta(B)$, $\zeta(B)$ is $\pi$-free, and $x^m\in\zeta(B)$. Put $C=\langle x\rangle+\zeta(B)$. By~The\-o\-rem~3.5 of \cite{colazzo}, we have that $|C\ast C|$ is a $\pi$-number. On the other hand, $C\ast C\leq\zeta(B)$, and so $C\ast C=0$. Thus, $x^m=mx$.

Let $b$ be any element of $B$. Then $m[x,b]_+=[mx,b]_+=0$, so $[x,b]_+=0$. Similarly, $[x,b]_\cdot=x\ast b=0$. Therefore $x$ belongs to $\zeta(B)$, the final contradiction.
\end{proof}

\begin{corolari}
Let $B$ be a brace. If $\zeta(B)$ is torsion-free, then $\zeta_{\alpha+1}(B)/\zeta_\alpha(B)$ is torsion-free for every ordinal $\alpha$.
\end{corolari}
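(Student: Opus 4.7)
The plan is to derive this corollary as an immediate specialisation of the preceding Theorem~\ref{moltobello}. The key observation is that being torsion-free is exactly the same as being $\pi$-free when $\pi$ is taken to be the set of \emph{all} prime numbers: indeed, every non-zero periodic element of a brace has finite order, which is by definition a product of primes, so its order is automatically a $\pi$-number for such a choice of $\pi$. Conversely, any non-zero $\pi$-element (for $\pi$ the set of all primes) is periodic by definition.

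With this identification in hand, I would simply set $\pi$ to be the set of all primes and apply Theorem~\ref{moltobello}. The hypothesis ‘‘$\zeta(B)$ is torsion-free'' translates verbatim into ‘‘$\zeta(B)$ is $\pi$-free''. The conclusion of the theorem then states that each factor $\zeta_{\alpha+1}(B)/\zeta_\alpha(B)$ of the upper central series is $\pi$-free, which under the same translation is precisely the statement that each such factor is torsion-free.

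There is essentially no obstacle here: the whole content has been compressed into Theorem~\ref{moltobello}, whose transfinite induction separately handled the successor and limit cases. The corollary is therefore a short one-line deduction, and the only thing worth emphasising in the write-up is the equivalence between torsion-freeness and $\pi$-freeness for $\pi$ the set of all primes, so that the reader can see at once why no additional argument is needed.
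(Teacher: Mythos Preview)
Your proposal is correct and matches the paper's approach exactly: the corollary is stated without proof immediately after Theorem~\ref{moltobello}, and the intended deduction is precisely to take $\pi$ to be the set of all primes, so that $\pi$-free coincides with torsion-free. Your explanation of why torsion-freeness is the same as $\pi$-freeness for this choice of $\pi$ is accurate and is the only point worth spelling out.
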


\medskip

Conversely, if we have information on the exponent of $\zeta(B)$, then we can obtain information on the exponent of the factors of the upper central series.

\begin{teorema}\label{dixmier}
Let $B$ be a brace. If $\zeta(B)$ has exponent $n$, then $\zeta_{i+1}(B)/\zeta_i(B)$ has exponent dividing $n^{2^i}$ for each positive integer $i$.
\end{teorema}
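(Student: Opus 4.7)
I proceed by induction on $i$, reducing to the case $i = 1$ by passing to the quotient $\bar B := B/\zeta_{i-1}(B)$. Indeed, $\zeta(\bar B) = \zeta_i(B)/\zeta_{i-1}(B)$ has exponent dividing $n^{2^{i-1}}$ by the inductive hypothesis, and since $\zeta_2(\bar B)/\zeta_1(\bar B) \cong \zeta_{i+1}(B)/\zeta_i(B)$, the case $i = 1$ applied to $\bar B$ (with $n$ replaced by $n^{2^{i-1}}$) will yield the required bound $(n^{2^{i-1}})^2 = n^{2^i}$ for $\zeta_{i+1}(B)/\zeta_i(B)$.

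It therefore suffices to prove the base case: if $\zeta(B)$ has exponent $m$, then $\zeta_2(B)/\zeta_1(B)$ has exponent dividing $m^2$. Since $\zeta_2(B)/\zeta_1(B) \leq \Soc(B/\zeta_1(B))$, additive and multiplicative orders agree on this quotient, and I shall in fact establish the sharper fact that $x^m \in \zeta_1(B)$ for every $x \in \zeta_2(B)$. This reduces to verifying $x^m \ast b = 0$, $b \ast x^m = 0$, and $[x^m, b]_+ = 0$ for all $b \in B$, as these three conditions together place $x^m$ in $\Ker\lambda \cap \Fix(B) \cap \Z(B,+) = \Soc(B) \cap \Fix(B) = \zeta_1(B)$.

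The computational heart of the argument is a trio of identities, each proved by routine induction on $k$, using the skew-brace axioms together with the fact---ensured by $x \in \zeta_2(B)$ and Proposition~\ref{prop:centre-commut}(3)---that $x \ast b$, $b \ast x$, $x \ast x$, and $[x, b]_+$ all lie in $\zeta_1(B) \subseteq \Fix(B) \cap \Z(B,+)$:
\begin{align*}
\text{(a)}\quad x^k \ast b &= k(x \ast b), \\
\text{(b)}\quad b \ast x^k &= k(b \ast x), \\
\text{(c)}\quad x^k &= k x + \binom{k}{2}(x \ast x).
\end{align*}
Identity (a) follows from Eq.~\eqref{eq:identitat_astprod} together with $x \ast (x^{k-1} \ast b) = 0$, which holds since $x^{k-1} \ast b \in \Fix(B)$. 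Identity (b) is obtained by expanding $x^k$ through Eq.~\eqref{eq:prod_ast} as $x^{k-1} + (x^{k-1} \ast x) + x$ and applying the additive automorphism $\lambda_b$: the middle summand is fixed by $\lambda_b$, and additive centrality of the $\ast$-error terms in $\Z(B,+)$ lets the remaining expression collapse to $(b \ast x^{k-1}) + (b \ast x)$. Identity (c) is obtained along the same lines, invoking (a) with $b = x$ to compute $x^k \ast x = k(x \ast x)$.

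Setting $k = m$: identities (a) and (b) give $x^m \ast b = 0 = b \ast x^m$, since $x \ast b$ and $b \ast x$ lie in $\zeta_1(B)$ and hence have order dividing $m$. For the additive commutator, identity (c) yields $x^m = mx + c$ with $c := \binom{m}{2}(x \ast x) \in \zeta_1(B) \subseteq \Z(B,+)$; centrality of $c$ gives $[x^m, b]_+ = [mx, b]_+$, and the identity $[mx, b]_+ = m[x, b]_+$---a simple induction on $m$ valid because $[x, b]_+ \in \Z(B,+)$---yields $m[x, b]_+ = 0$. The main obstacle is concentrated in identity (b), where the possibly non-abelian additive structure of $B$ forces careful tracking of rearrangements; the payoff is that every error term lies in $\Z(B,+)$, which makes the computation collapse cleanly.
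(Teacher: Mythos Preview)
Your argument is correct and in fact proves more than the theorem states: your base case shows $x^m\in\zeta(B)$ for every $x\in\zeta_2(B)$, so $\zeta_2(B)/\zeta(B)$ has exponent dividing $m$, not merely $m^2$. Carried through your induction, this gives that \emph{every} factor $\zeta_{i+1}(B)/\zeta_i(B)$ has exponent dividing $n$, a genuine improvement over the paper's bound $n^{2^i}$. (Your opening paragraph announces only the weaker bound $(n^{2^{i-1}})^2$; this is harmlessly below what your computation actually delivers.)

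The paper's route is different. It obtains $b^n\in\Ker\lambda$ via your identity~(a), and $b^n\in Z(B,\cdot)$ via the group identity $[a,b^n]_\cdot=[a,b]_\cdot^{\,n}$; but at that point it does not yet know $b^n\in Z(B,+)$, so it spends a \emph{second} factor of $n$: since $b^n\in\Ker\lambda$ one has $b^{n^2}=(b^n)^n=nb^n$, and then $[a,nb^n]_+=n[a,b^n]_+=0$ places $b^{n^2}$ in $Z(B,+)$ and hence in $\zeta(B)$. Your approach avoids the multiplicative commutator altogether: identity~(b) puts $x^m$ directly into $\Fix(B)$, and identity~(c) combined with $[mx,b]_+=m[x,b]_+$ puts it directly into $Z(B,+)$, so no extra power of $n$ is needed. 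The cost is the additional bookkeeping for (b) and (c) in a possibly non-abelian $(B,+)$; the payoff is the optimal bound, matching the classical Dixmier-type statement for nilpotent groups.
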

\begin{proof}
It is enough to show that $\zeta_2(B)/\zeta(B)$ has exponent dividing $n^2$. Let $b\in\zeta_2(B)$ and $a\in B$. Then $b\ast a\in\zeta(B)$, so $$b^n\ast a=n(b\ast a)=0\quad\textnormal{and}\quad [a,b^n]_\cdot=[a,b]^{n}_\cdot=0.$$ Thus, if we put $c=nb^n=b^{n^2}$, then $c\in\operatorname{Ker}(\lambda)\cap Z(B,\cdot)$. But also $$[a,nb^n]_+=n[a,b^n]_+=0$$ and so $c\in \zeta(B)$. 
\end{proof}

\begin{corolari}
Let $B$ be a brace such that $\zeta(B)$ has exponent $n$. If $B$ is centrally nilpotent of class $c$, then $B$ has exponent at most $n^{2^c-1}$.
\end{corolari}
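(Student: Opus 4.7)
The plan is to apply Theorem \ref{dixmier} telescopically along the upper central series of $B$. Since $B$ is centrally nilpotent of class $c$, we have the chain
\[ 0 = \zeta_0(B) \leq \zeta_1(B) \leq \cdots \leq \zeta_c(B) = B.\]
By hypothesis, $\zeta_1(B) = \zeta(B)$ has exponent $n = n^{2^0}$, and by Theorem \ref{dixmier}, each successive factor $\zeta_{i+1}(B)/\zeta_i(B)$ has exponent dividing $n^{2^i}$ for every $1 \leq i \leq c-1$.

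Given any $b \in B = \zeta_c(B)$, I would proceed by a descending telescoping in the additive group: from the exponent bound on the topmost factor, $n^{2^{c-1}} b \in \zeta_{c-1}(B)$; then applying the bound for the next factor down, $n^{2^{c-2}} \cdot n^{2^{c-1}} b \in \zeta_{c-2}(B)$; and iterating, we eventually land in $\zeta_0(B) = 0$, giving
\[ n^{\,2^0 + 2^1 + \cdots + 2^{c-1}}\, b \;=\; n^{\,2^c - 1}\, b \;=\; 0,\]
where the exponent is computed via the geometric sum $\sum_{i=0}^{c-1} 2^i = 2^c - 1$. The same descending telescoping applied verbatim in the multiplicative group of $B$ gives $b^{\,n^{2^c-1}} = 0$, because the bounds of Theorem \ref{dixmier} control simultaneously both operations of the brace factor $\zeta_{i+1}(B)/\zeta_i(B)$. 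Combining both yields that $B$ has exponent at most $n^{2^c - 1}$.

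There is no real obstacle here: once Theorem \ref{dixmier} is in hand, the corollary is a routine geometric-sum telescoping. The only point to be careful about is that the notion of exponent of a brace simultaneously concerns the additive and multiplicative operations (as per the definition recalled in Section \ref{sec:tor}), so one must verify that the exponent bound from Theorem \ref{dixmier} is a brace-exponent bound on the factor, not merely an additive one; this is indeed what is proved there, since the key relations $b^n\ast a = n(b\ast a)$ and $[a,b^n]_\cdot = [a,b]_\cdot^n$ used in its proof imply the bound in both structures of the factor.
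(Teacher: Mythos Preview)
Your proof is correct and is precisely the intended argument: the paper states the corollary without proof immediately after Theorem~\ref{dixmier}, and the telescoping along the upper central series with the geometric sum $\sum_{i=0}^{c-1}2^i=2^c-1$ is the obvious route. One minor simplification: since each factor $\zeta_{i+1}(B)/\zeta_i(B)$ lies in $\zeta\big(B/\zeta_i(B)\big)$ and is therefore a trivial brace, additive and multiplicative powers coincide there, so the two telescopings you carry out are in fact one and the same computation.
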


\subsection{The index of a subbrace}\label{sec:index}

\noindent One of the ways in which we cope with infinite groups is by using finite-index subgroups. For braces, things are much more complicated, since we could deal at the same time with two distinct unrelated indices. The aim of this section is to study the ‘‘index’’ of a subbrace with particular emphasis to case of locally centrally-nilpotent braces. The following definition provides us with an invaluable tool in this study.

\begin{definicio}
Let $C$ be a subbrace of the brace $B$. We say that $C$ is {\it serial} in $B$ if there is a chain of subbraces $\mathcal{C}$ connecting $C$ to $B$ such that if~\hbox{$D<E$} are consecutive elements of $\mathcal{C}$, then $D\trianglelefteq E$ --- as in the case of $c$-series, we usually assume that these chains of subbraces are {\it complete}, meaning that they contain arbitrary unions and intersections of their members.

Now, $C$ is {\it ascendant} (resp. {\it descendant}) if $\mathcal{C}$ can be well-ordered (resp. inversely well-ordered) with respect to the inclusion and its order type is $\lambda$ (resp. the inverse of $\lambda$) for some ordinal number $\lambda$. If $C$ is ascendant, then~$\mathcal{C}$ can be written as \[
\begin{array}{c}\label{asc}
C=C_0\trianglelefteq C_1\trianglelefteq\ldots C_\alpha\trianglelefteq C_{\alpha+1}\trianglelefteq\ldots C_\lambda=B\tag{$\triangle$},
\end{array}
\] where $\alpha<\lambda$ are ordinal numbers; while, if $C$ is descendant, then $\mathcal{C}$ takes the form \[
\begin{array}{c}\label{desc}
C=C_\lambda\ldots\trianglelefteq C_{\alpha+1}\trianglelefteq C_{\alpha}\trianglelefteq\ldots\trianglelefteq C_1\trianglelefteq C_0=B,\tag{$\square$}
\end{array}
\] where $\alpha<\lambda$ are ordinal numbers. If $\mathcal{I}$ is finite, we say that $C$ is {\it subideal}.

If $C$ is ascendant (resp. descendant) in $B$, then the smallest ordinal number $\lambda$ for which there is a chain of subbraces of type \eqref{asc} (resp. of type \eqref{desc}) is the {\it ascendant length} (resp. {\it descendant length}) of $C$ in $B$. In case $C$ is subideal, then the ascendant length of $C$ in $B$ is finite and is also called the {\it subideal defect} of $C$ in $B$. 
\end{definicio}

Let $C$ be a subbrace of a brace $B$. Put $C^{B,0}:=C$ and recursively define $C^{B,\alpha+1}=C^{C^{B,\alpha}}$ for every ordinal $\alpha$, and $C^{B,\lambda}=\bigcap_{\alpha<\lambda}C^{B,\alpha}$ for every limit ordinal $\lambda$. The family $\{C^{B,\alpha}\}_{\alpha\in\operatorname{Ord}}$ is the {\it ideal closure series} of $C$ in $B$. It is easy to show that $C$ is descendant (resp. subideal) in $B$ if and only if $C=C^{B,\mu}$ for some ordinal $\mu$ (resp. for some finite ordinal $\mu$). If $C$ is descendant (resp. subideal), then the descendant length (resp. the subideal defect) of $C$ is then the smallest ordinal number $\lambda$ for which $C=C^{B,\lambda}$.

\medskip

The following easy lemma contains all the basic properties of subideal, ascendant, descendant and serial subbraces. 

\begin{lema}
Let $B$ be a brace. 

\begin{itemize} 
    \item Every subideal of $B$ is ascendant, descendant and serial.
    \item Ascendant \textnormal(resp. descendant\textnormal) subbraces are serial.
    \item If $C$ is subideal \textnormal(resp. ascendant, descendant, serial\textnormal) in $B$, and $D\leq B$, then $C\cap D$ is subideal \textnormal(resp. ascendant, descendant, serial\textnormal) in $D$.
    \item If $C$ is subideal \textnormal(resp. ascendant\textnormal), then $CI/I$ is subideal \textnormal(resp. ascendant\textnormal) in $B/I$ for every ideal $I$ of $B$.
    \item If $C$ is subideal in $B$ of defect $n$, then $C$ is subideal in $C^B$ of defect $n-1$.
\end{itemize}
\end{lema}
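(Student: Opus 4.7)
The first two bullets I would treat as immediate from the definitions. A subideal chain $C=C_0\trianglelefteq\cdots\trianglelefteq C_n=B$ is simultaneously well-ordered and inversely well-ordered of finite order type, and every ascending or descending chain of subbraces is in particular serial.

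For the intersection bullet, my plan is: given a (serial, ascending, descending, or finite) chain $\{C_\alpha\}$ from $C$ to $B$, take $\{C_\alpha\cap D\}$ as the corresponding chain from $C\cap D$ to $D$, which retains the same order type because intersection with $D$ commutes with both directed unions and intersections. The only nontrivial input is the elementary fact that $X\trianglelefteq Y$ and $E\leq Y$ imply $X\cap E\trianglelefteq E$; this I would verify by checking the three defining conditions for an ideal (additive normality, $\lambda$-invariance, and the multiplicative $\ast$-condition), each inherited from the corresponding property of $X$ in $Y$.

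For the projection bullet, I would push the chain forward via $\pi\colon B\to B/I$. First I would verify that $CI=C+I$ is a subbrace of $B$ whenever $I$ is an ideal (using $\lambda_a(I)\subseteq I$ to rewrite $a+b$ as $a\cdot\lambda_a^{-1}(b)$), so that the images $\pi(C_\alpha)=C_\alpha I/I$ form a chain of subbraces of $B/I$. The key elementary step is: if $X\trianglelefteq Y$ are subbraces of $B$ and $I\trianglelefteq B$, then $XI/I\trianglelefteq YI/I$ in $B/I$. This I would prove by applying the first isomorphism theorem to the canonical surjection $Y\twoheadrightarrow Y/(Y\cap I)\cong YI/I$, under which the ideal $X(Y\cap I)$ of $Y$ (itself an ideal since $Y\cap I\trianglelefteq Y$) is sent to $XI/I$. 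The restriction to the subideal and ascendant cases is necessary precisely because $\pi$ commutes with unions but not with arbitrary intersections, so the descending/serial cases may break at limit ordinals.

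For the final bullet, the proof should fall out directly from the definition of the ideal-closure series $B=C^{B,0}\supsetneq C^{B,1}=C^B\supsetneq\cdots\supsetneq C^{B,n}=C$. A straightforward induction shows that the ideal-closure series of $C$ computed inside $C^B$ is precisely $C^B=C^{B,1}\supseteq C^{B,2}\supseteq\cdots\supseteq C^{B,n}=C$, of length $n-1$. The main (minor) technical obstacle across the whole lemma lies in the two preservation properties of ideality just sketched; once they are in hand, every chain manipulation is purely formal.
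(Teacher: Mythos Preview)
Your proposal is correct, and the paper itself offers no proof whatsoever: the lemma is simply labelled ``easy'' and stated without argument. What you have sketched is the standard verification, and every step checks out --- in particular your observations that $CI=C+I$ is a subbrace when $I\trianglelefteq B$, that $X\trianglelefteq Y$ and $I\trianglelefteq B$ imply $XI/I\trianglelefteq YI/I$ via the isomorphism theorems, and that the restriction to the subideal/ascendant cases in the projection bullet is forced by the failure of images to commute with intersections at limit stages. One small remark on the last bullet: your induction gives defect \emph{at most} $n-1$ in $C^B$; to get equality you should also note that $C^{B,n-1}\ne C$ (by minimality of $n$), hence $C^{C^B,n-2}=C^{B,n-1}\ne C$.
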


\medskip

Serial (resp. ascendant) subbraces play a relevant role in the theory of locally centrally-nilpotent braces (resp. hypercentral braces), as the following result shows.

\begin{lema}\label{annihilatorascendant}
Let $B$ be a brace.

\begin{itemize}
    \item[\textnormal{(1)}] If $B$ is hypercentral, then every subbrace $C$ of $B$ is ascendant.
    \item[\textnormal{(2)}] If $B$ is centrally-nilpotent, then every subbrace $C$ of $B$ is subideal.
    \item[\textnormal{(3)}] If $B$ is locally centrally-nilpotent, then every subbrace $C$ of $B$ is serial.
\end{itemize}
\end{lema}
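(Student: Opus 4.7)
The plan is to reduce all three statements to a single key lemma: for any ideal $Z$ of $B$ with $Z\leq\zeta(B)$ and any subbrace $C$ of $B$, the subbrace $C$ is an ideal of $C+Z$. To prove this key lemma, observe that every $z\in\zeta(B)$ is additively and multiplicatively central, lies in $\operatorname{Ker}(\lambda)$, and is fixed by each $\lambda_b$; in particular $cz=zc=c+z$ for all $c$. Writing $x=c+z\in C+Z$ with $c\in C$, $z\in Z$ and $y\in C$, direct computations via \eqref{eq:prod_ast}, \eqref{eq:identitat_astprod} and \eqref{eq:identitat_astsum} give $[x,y]_+=[c,y]_+$, $[x,y]_\cdot=[c,y]_\cdot$, $y\ast x=y\ast c$ and $x\ast y=c\ast y$, each of which belongs to $C$. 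Consequently $C$ is additively and multiplicatively normal in $C+Z$ and is $\lambda$-invariant under elements of $C+Z$, proving the key lemma.

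For (1) and (2), put $C_\alpha=C+\zeta_\alpha(B)$ for every ordinal $\alpha\leq n_c(B)$. Since $\zeta_{\alpha+1}(B)/\zeta_\alpha(B)\leq\zeta(B/\zeta_\alpha(B))$, applying the key lemma inside $B/\zeta_\alpha(B)$ yields $C_\alpha\trianglelefteq C_{\alpha+1}$; at a limit ordinal $\lambda$, $C_\lambda=\bigcup_{\beta<\lambda}C_\beta$ by construction. In the hypercentral case $\zeta_{n_c(B)}(B)=B$, so $C_{n_c(B)}=B$ and $\{C_\alpha\}_{\alpha\leq n_c(B)}$ is an ascending ideal series from $C$ to $B$, proving (1). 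In the centrally nilpotent case the chain is finite, so $C$ is subideal in $B$, proving (2).

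For (3), by Corollary~\ref{localezetanilp} the brace $B$ is $\zeta$-nilpotent, hence admits a complete $c$-series $\mathcal{I}$. Consider $\mathcal{C}=\{C+I:I\in\mathcal{I}\}$, a chain of subbraces from $C$ to $B$; the key lemma applied in $B/J$ for consecutive $J<I$ in $\mathcal{I}$ shows $C+J\trianglelefteq C+I$, and closure under unions follows from $\bigcup_\alpha(C+I_\alpha)=C+\bigcup_\alpha I_\alpha$ together with completeness of $\mathcal{I}$. The main obstacle will be closure under intersections, since $\bigcap_\alpha(C+I_\alpha)$ may properly contain $C+\bigcap_\alpha I_\alpha$; the plan is to enlarge $\mathcal{C}$ to its completion in the lattice of subbraces containing $C$, showing that each newly added element $D$, being sandwiched between $C+\bigcap_\alpha I_\alpha$ and each $C+I_\alpha$, inherits the ideal relations with its immediate neighbours in the completed chain. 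This yields a complete serial chain from $C$ to $B$, proving (3).
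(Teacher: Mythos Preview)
Your arguments for (1) and (2) are correct and coincide with the paper's proof: the chain $C_\alpha=C+\zeta_\alpha(B)$ is exactly what the paper uses, and your explicit verification of the key lemma (that $C\trianglelefteq C+Z$ whenever $Z\leq\zeta(B)$) supplies the detail the paper leaves implicit.

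For (3), however, your approach diverges from the paper's and carries a real gap. You invoke Corollary~\ref{localezetanilp} to obtain a complete $c$-series $\mathcal{I}$ and form $\mathcal{C}=\{C+I:I\in\mathcal{I}\}$. The obstacle you flag---closure under intersections---is genuine and is not resolved by your sketch. Concretely, suppose $D<E$ are consecutive in $\mathcal{C}$ and set $J^*=\max\{K\in\mathcal{I}:C+K=D\}$. If the set $\{K\in\mathcal{I}:C+K=E\}$ has no minimum, its infimum is forced to equal $J^*$, so there is no single central step above $J^*$ in $\mathcal{I}$ that produces $E$, and your key lemma cannot be applied to give $D\trianglelefteq E$. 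Your proposed remedy---completing $\mathcal{C}$ and asserting that newly inserted intersections ``inherit the ideal relations''---is not justified: you give no mechanism by which an arbitrary subbrace sandwiched between $C+\bigcap_\alpha I_\alpha$ and $\bigcap_\alpha(C+I_\alpha)$ would be normalised by its neighbours in the completed chain.

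The paper's proof of (3) avoids $c$-series entirely. By Zorn's lemma there is a \emph{maximal} chain of subbraces between $C$ and $B$; such a chain is automatically complete, and whenever $D<E$ are consecutive in it, $D$ is a maximal subbrace of $E$. Since $E$, as a subbrace of $B$, is itself locally centrally-nilpotent, Theorem~\ref{locallnilp}\,(2) gives $D\trianglelefteq E$ directly.
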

\begin{proof}
(1)\quad Let $\lambda$ be the hypercentral length of $B$. Since $C$ is an ideal of $C+\zeta(B)$, we see that $$C\trianglelefteq C+\zeta(B)\trianglelefteq \ldots C+\zeta_\alpha(B)\trianglelefteq C+\zeta_{\alpha+1}(B)\trianglelefteq\ldots C+\zeta_{\lambda}(B)=B$$ is an ascending chain of subbraces of $B$ connecting $C$ to $B$. 

\medskip

\noindent(2)\quad The proof is the same of (1).

\medskip

\noindent(3)\quad Zorn's lemma implies that there is a maximal chain of subbraces between~$C$ and $B$. By Lemma \ref{locallnilp} (2), if $D<E$ are consecutive terms of this chain, then $D$ is an ideal of $E$. Therefore $C$ is serial in $B$.
\end{proof}

\begin{nota}\label{potenziare}
Let $B$ be a brace and let $C$ be a (strong) left ideal of $B$. The proof of Lemma \ref{annihilatorascendant} can actually be employed to prove that if we have an ascending $c$-series of $B$, then there is an ascending chain of (strong) left ideals connecting $C$ to $B$.
\end{nota}

\medskip

Lemma \ref{annihilatorascendant} has some rather interesting consequences concerning the ‘‘index’’ of a subbrace.

\begin{definicio}
{\rm Let $B$ be a brace. A subbrace $C$ of $B$ is said to have {\it finite index} in $B$ if both $n_+=|(B,+):(C,+)|$ and $n_\cdot=|(B,\cdot):(C,\cdot)|$ are finite; if~\hbox{$n_+=n_\cdot=n$,} we define the {\it index $|B:C|$} of $C$ in $B$ as $n$. If $C$ has not finite index, we say that $C$ has {\it infinite index}.}
\end{definicio}

\medskip

The following easy result comprises some of the basic statements about subbraces of finite index.

\begin{lema}\label{basicpropertiesindex}
Let $B$ be a brace, $C,D\leq B$ and $I\trianglelefteq B$. Then:
\begin{itemize}
    \item[\textnormal{(1)}] If $C$ and $D$ have finite index in $B$, then $C\cap D$ has finite index in $B$.
    \item[\textnormal{(2)}] Suppose $C\leq D$. If $C$ has finite index in $D$, and $D$ has finite index in~$B$, then $C$ has finite index in $B$. Moreover, if $|D:C|$ and $|B:D|$ exist, then also $|B:C|=|B:D|\cdot|D:C|$ exists.
    \item[\textnormal{(3)}] If $I$ has finite index, then $|B:I|$ exists and is equal to $|B/I|$.
\end{itemize}
\end{lema}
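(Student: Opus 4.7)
The plan is to reduce each assertion to the corresponding classical fact about subgroup indices, applied separately to the additive and multiplicative group structures of $B$.

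For (1), since $C$ and $D$ have finite index in $B$, by definition all four of $|(B,+):(C,+)|$, $|(B,+):(D,+)|$, $|(B,\cdot):(C,\cdot)|$ and $|(B,\cdot):(D,\cdot)|$ are finite. The classical Poincar\'e theorem on the intersection of two subgroups of finite index, applied in turn to the groups $(B,+)$ and $(B,\cdot)$, then yields that $C\cap D$ has finite index in each of these two groups and hence in $B$ in the sense of the definition above.

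For (2), the standard multiplicativity of indices in group theory gives $|(B,+):(C,+)|=|(B,+):(D,+)|\cdot|(D,+):(C,+)|$ and the analogous identity for the multiplicative group; each factor on the right-hand side is finite by hypothesis, so $C$ has finite index in $B$. For the ``moreover'' part, the assumption that $|B:D|$ and $|D:C|$ exist means that additive and multiplicative indices agree in each of these two cases; multiplying these two identities, the additive and multiplicative indices of $C$ in $B$ both equal $|B:D|\cdot|D:C|$, so $|B:C|$ exists and is equal to this product.

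For (3), the delicate point --- and the only place where one has to invoke the brace structure rather than merely the two underlying group structures --- is that since $I$ is an ideal of $B$, the additive and multiplicative cosets of $I$ in $B$ coincide: indeed, as already recorded in Section~\ref{sec:prelim}, $B/I=\{b+I=bI:b\in B\}$. Consequently $|(B,+):(I,+)|=|B/I|=|(B,\cdot):(I,\cdot)|$, and if this common value is finite then by definition $|B:I|$ exists and equals $|B/I|$. No real obstacle is foreseen: the whole lemma amounts to combining the classical behaviour of indices in groups with the already-recorded fact that, for ideals, additive and multiplicative cosets coincide.
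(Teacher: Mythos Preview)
Your proof is correct and is precisely the intended argument: the paper states this lemma without proof, calling it an ``easy result'', and the natural justification is exactly the reduction you give to the classical subgroup-index facts in each of $(B,+)$ and $(B,\cdot)$, together with the equality $b+I=bI$ for ideals recorded in Section~\ref{sec:prelim}. Nothing further is needed.
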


\medskip

The following result shows that serial subbraces of finite index have a well-defined index.

\begin{lema}\label{ascendantindex}
Let $C$ be a serial subbrace of the brace $B$. The following conditions are equivalent:
\begin{itemize}
    \item[\textnormal{(1)}] $|(B,+):(C,+)|$ is finite.
    \item[\textnormal{(2)}] $|(B,\cdot):(C,\cdot)|$ is finite.
    \item[\textnormal{(3)}] $C$ has finite index in $B$.
    \item[\textnormal{(4)}] $|B:C|$ exists.
\end{itemize}

\noindent In particular, if any of the above equivalent statements hold, then all the indices are equal.
\end{lema}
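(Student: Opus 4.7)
The plan is to prove the nontrivial implications (1)$\Rightarrow$(4) and (2)$\Rightarrow$(4) with the equality of all indices built in, since (4)$\Rightarrow$(3) and (3)$\Rightarrow$(1),(2) are immediate from the definition of finite index. I will concentrate on (1)$\Rightarrow$(4); the argument for (2)$\Rightarrow$(4) will be identical with the roles of $+$ and $\cdot$ interchanged.

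First I would fix a complete chain $\mathcal{C}$ of subbraces witnessing the seriality of $C$ in $B$ and assume $n := |(B,+):(C,+)|$ is finite. The key observation is that any strictly ascending chain of subgroups of $(B,+)$ containing $(C,+)$ has length at most $\log_2 n$, since each step has additive index at least $2$. Because $\mathcal{C}$ is totally ordered by inclusion and the assignment $D\mapsto (D,+)$ is injective on $\mathcal{C}$, the chain $\mathcal{C}$ is itself finite and can be written as $C = C_0 \subsetneq C_1 \subsetneq \cdots \subsetneq C_k = B$.

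I would then exploit the serial condition: since consecutive members of $\mathcal{C}$ satisfy $C_i \trianglelefteq C_{i+1}$, each $C_{i+1}/C_i$ is a brace whose additive order divides $n$ and is hence finite. Consequently its multiplicative order is finite and coincides with the additive one, so Lemma \ref{basicpropertiesindex}(3) gives $|C_{i+1}:C_i| = |C_{i+1}/C_i|$ as a well-defined integer. A finite induction based on Lemma \ref{basicpropertiesindex}(2) then yields $|B:C| = \prod_i |C_{i+1}/C_i|$. Evaluating this product additively gives precisely $n$, so $|(B,\cdot):(C,\cdot)| = n$ as well, proving (1)$\Leftrightarrow$(4) with all indices equal. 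The symmetric argument starting from finiteness of the multiplicative index closes the loop.

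The only real obstacle I anticipate is the collapse of the possibly transfinite serial chain $\mathcal{C}$ into a finite chain of consecutive ideals. Seriality is essential here: without it, neither a finite additive nor a finite multiplicative index produces a tower of ideal containments, and Lemma \ref{basicpropertiesindex}(3) --- the tool that equates additive and multiplicative indices --- applies only to ideals. Once this collapse is in hand, the remainder of the argument is a routine telescoping via Lemma \ref{basicpropertiesindex}.
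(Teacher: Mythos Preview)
Your proposal is correct and follows essentially the same approach as the paper: both arguments observe that the finiteness of the additive index forces the serial chain to collapse to a finite chain (so $C$ is subideal), and then telescope using Lemma~\ref{basicpropertiesindex}. The only cosmetic difference is that the paper organises the induction via the ideal closure series $C^{B,i}$ and the subideal defect, whereas you work directly with the finite serial chain $C_0\lhd C_1\lhd\cdots\lhd C_k$; the content is the same.
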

\begin{proof}
Clearly, (4) $\implies$ (1), (2) and (3). Assume (1). Since $C$ is serial in~$B$, there is a chain $\mathcal{C}$ of subbraces connecting $C$ to $B$, and in which \hbox{$E\trianglelefteq F$} whenever $E<\!\!\!\cdot\,F$. Looking at the corresponding additive parts of the members of $\mathcal{C}$, we see that $\mathcal{C}$ is actually finite, so $C$ is subideal in $B$. We prove the result by induction on the subideal defect $n$ of $C$ in $B$. If $n\leq1$, then $C$ is an ideal of $B$ such that $(B,+)/(C,+)$ is finite, so $B/C$ is finite and we are done. Assume $n>1$ and let $D=C^B$. Then the subideal defect of $C$ in $D$ is strictly less than $n$ and so induction yields that $|D:C|$ exists. Since $|B:D|$ trivially exists, we have that $|B:C|$ exists by~Lem\-ma~\ref{basicpropertiesindex}. Thus, (4) is proved. Similarly, we can prove that (2) implies (4), and that (3) implies~(4).
\end{proof}

\medskip

A combination of Lemma \ref{ascendantindex} and Lemma \ref{annihilatorascendant} shows that every finite-index subbrace of a locally centrally-nilpotent brace has a well-defined index. Our next result is a considerable extension of this fact.

\begin{teorema}\label{indexexists}
Let $B$ be a brace having an ascendant chain of ideals $$0=I_0\leq I_1\leq\ldots I_\alpha\leq I_{\alpha+1}\leq\ldots I_\lambda=B$$ such that $I_{\beta+1}/I_\beta$ is either finite or locally centrally-nilpotent for all ordinal numbers $\beta<\lambda$. If $C$ is a subbrace of $B$, then the following are equivalent:
\begin{itemize}
    \item[\textnormal{(1)}] $|(B,+):(C,+)|$ is finite.
    \item[\textnormal{(2)}] $|(B,\cdot):(C,\cdot)|$ is finite.
    \item[\textnormal{(3)}] $C$ has finite index in $B$.
    \item[\textnormal{(4)}] $|B:C|$ exists.
\end{itemize}
\end{teorema}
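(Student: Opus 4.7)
The plan is to proceed by transfinite induction on the length $\lambda$ of the ascendant chain of ideals. The implications $(4) \Rightarrow (3) \Rightarrow (1)$ and $(3) \Rightarrow (2)$ are trivial from the definitions, and $(1) \Rightarrow (4)$ together with the symmetric implication $(2) \Rightarrow (4)$ (which is proved by interchanging the roles of the two operations) close the cycle. So the substantive content is $(1) \Rightarrow (4)$. The base case $\lambda=0$ is vacuous.

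For the successor case $\lambda = \alpha + 1$, I would pass to $I_\alpha$ and to $B/I_\alpha$ separately. Since $I_\alpha$ is an ideal, $CI_\alpha = C + I_\alpha$ by the preliminaries, so $CI_\alpha$ is a subbrace of $B$ and $I_\alpha$ is normal in $CI_\alpha$ under both operations. Standard group theory gives $|(I_\alpha,+):(C\cap I_\alpha,+)| \leq |(B,+):(C,+)| < \infty$, so the inductive hypothesis applied to $I_\alpha$ (which has an ascendant chain of length $\alpha$ with the same factors) yields that $|I_\alpha : C \cap I_\alpha|$ exists. At the same time, $|(B/I_\alpha,+):(CI_\alpha/I_\alpha,+)|$ is finite: if $B/I_\alpha$ is finite this is automatic (and additive and multiplicative indices coincide by cardinality); if $B/I_\alpha$ is locally centrally-nilpotent, then $CI_\alpha/I_\alpha$ is serial in $B/I_\alpha$ by Lemma \ref{annihilatorascendant}(3), and Lemma \ref{ascendantindex} yields that $|B/I_\alpha : CI_\alpha/I_\alpha|$ exists. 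Combining through Lemma \ref{basicpropertiesindex} with the identities $|B:CI_\alpha|_\circ = |B/I_\alpha : CI_\alpha/I_\alpha|_\circ$ and $|CI_\alpha:C|_\circ = |I_\alpha : C \cap I_\alpha|_\circ$ (for $\circ \in \{+,\cdot\}$), both of which hold since $I_\alpha$ is normal under each operation, we conclude that $|B:C|$ exists.

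The limit case is the one I expect to be the key obstacle. Here $B = \bigcup_{\beta < \lambda} I_\beta$. Assuming (1), let $x_1,\ldots,x_n$ be additive coset representatives of $C$ in $B$; each $x_i$ lies in some $I_{\beta_i}$, and since $\{\beta_i\}$ is a finite set of ordinals below the limit $\lambda$, we may set $\alpha := \max_i \beta_i < \lambda$, obtaining $(B,+) = (C,+) + (I_\alpha,+)$, i.e.\ $B = C + I_\alpha = CI_\alpha$ as sets. Now $(I_\alpha,+):(C \cap I_\alpha,+)$ is finite (being equal to $|(B,+):(C,+)|$), so by induction applied to $I_\alpha$ the index $|I_\alpha : C \cap I_\alpha|$ exists. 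Reading the group-theoretic identity $|CI_\alpha : C|_\circ = |I_\alpha : C \cap I_\alpha|_\circ$ in both the additive and multiplicative group structures (valid because $I_\alpha$ is normal under each), we obtain $|(B,\cdot):(C,\cdot)| = |(I_\alpha,\cdot):(C\cap I_\alpha,\cdot)| = |(I_\alpha,+):(C\cap I_\alpha,+)| = |(B,+):(C,+)|$, so $|B:C|$ exists.

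The main technical point in both cases is that the equality $CI_\alpha = C + I_\alpha$ (guaranteed since $I_\alpha$ is a strong left ideal) allows the additive and multiplicative indices to be compared through the \emph{same} subset $CI_\alpha$, and that $I_\alpha$ being a two-sided ideal makes both $|CI_\alpha : C|$ and $|B : CI_\alpha|$ equal when computed in either operation. Once this is clearly set up, the transfinite induction runs without further difficulty.
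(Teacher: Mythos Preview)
Your proof is correct and follows essentially the same approach as the paper's: transfinite induction on $\lambda$, reducing to the implication $(1)\Rightarrow(4)$ (with $(2)\Rightarrow(4)$ by symmetry), handling the successor step via the ideal $I_{\lambda-1}$ and Lemma~\ref{basicpropertiesindex}, and invoking Lemmas~\ref{annihilatorascendant}(3) and~\ref{ascendantindex} for the locally centrally-nilpotent top factor. Your limit case is in fact a bit cleaner than the paper's: you observe directly that the finite additive transversal lies in some $I_\alpha$, hence $B=C+I_\alpha=CI_\alpha$, and then the identity $|CI_\alpha:C|_\circ=|I_\alpha:C\cap I_\alpha|_\circ$ (for $\circ\in\{+,\cdot\}$) together with the inductive hypothesis on $I_\alpha$ gives equality of the two indices immediately; the paper instead argues by contradiction, comparing sizes of additive and multiplicative transversals inside a common $I_\mu$.
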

\begin{proof}
We prove the result by induction on $\lambda$. To this aim it is sufficient to show that (1) implies (4).

If $\lambda\leq1$, then $B$ is either finite or locally centrally-nilpotent. The former case is obvious, while the latter is a consequence of~Lem\-ma~\ref{annihilatorascendant}~(3) and~Lem\-ma~\ref{ascendantindex}. Assume $\lambda>1$.

Suppose $\lambda$ is successor. Since $(C,+)$ has finite index in $(B,+)$, it follows that $(C\cap I_{\lambda-1},+)$ has finite index $n$ in $(I_{\lambda-1},+)$. By induction, $$n=\big|(I_{\lambda-1},\cdot):(C\cap I_{\lambda-1},\cdot)\big|=\big|(CI_{\lambda-1},\cdot):(C,\cdot)\big|=\big|(C+I_{\lambda-1},+):(C,+)\big|.$$ Thus the index $|C+I_{\lambda-1}:C|$ exists. Since also the index $|B:C+I_{\lambda-1}|$ exists, it follows that the index $|B:C|$ exists.

Now, assume $\lambda$ is limit. Let $F_+$ be a transversal for $(C,+)$ in $(B,+)$; in particular, $F$ is finite. Also, let $F_\cdot$ be a transversal for $(C,\cdot)$ in $(B,\cdot)$. Suppose $|F_\cdot|>|F_+|$, and let $E_\cdot$ be a finite subset of $F_\cdot$ such that $|E_\cdot|>|F_+|$. Then there is an ordinal number $\mu<\lambda$ such that $F_+\cup E_\cdot\subseteq I_\mu$. By induction, $$\big|(B,+):(C,+)\big|=\big|(I_\mu,+):(C\cap I_\mu,+)\big|=\big|(I_\mu,\cdot):(C\cap I_\mu,\cdot)\big|,$$ a contradiction. Thus $|F_\cdot|\leq |F_+|$. By a symmetric argument, \hbox{$|F_+|\leq |F_\cdot|$} and hence the index $|B:C|$ exists.
\end{proof}

\medskip

The range of applicability of Theorem \ref{indexexists} is not restricted to local centrally-nilpotent brace. It follows in fact from \cite{colazzo}, Theorems 3.14, that Theorem \ref{indexexists} applies even to any {\it good} brace with {\it property} (S) (see \cite{colazzo} for the definitions).

\medskip

We end this discussion by showing that subbraces of finite index can sometimes be employed to prove the existence of large proper ideals.

\begin{teorema}
Let $B$ be a brace such that $B/\zeta_2(B)$ is finite. If $C$ is any finite-index subbrace of $B$, then $B/C_B$ is finite.
\end{teorema}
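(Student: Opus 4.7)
The plan is to exhibit an ideal $J\trianglelefteq B$ with $J\le C$ and $|B:J|<\infty$; this will force $J\le C_B$ and hence $|B:C_B|<\infty$. Set $Z=\zeta(B)$, $Z_2=\zeta_2(B)$, $V=C\cap Z_2$ and $E=C\cap Z$. The finiteness of $B/Z_2$, together with Theorem~\ref{indexexists} applied to the ascending chain of ideals $0\le Z\le Z_2\le B$ (whose factors are centrally nilpotent, centrally nilpotent, and finite respectively), guarantees that the indices $|B:C|$, $|Z_2:V|$ and $|Z:E|$ are all well-defined and finite. Because $Z$ is a trivial brace sitting inside the additive centre of $B$, every additive subgroup of $Z$ is automatically an ideal of $B$; in particular $E\trianglelefteq B$.

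The crucial structural fact to exploit is $[Z_2,B]^B\le Z$: for every $x\in Z_2$ and every $b\in B$, each of the four elements $x\ast b$, $b\ast x$, $[x,b]_+$ and $[x,b]_\cdot$ lies in $Z$. Combining this with $Z\le\Ker\lambda\cap\Fix(B)$ and the identities \eqref{eq:identitat_astprod}--\eqref{eq:identitat_astsum}, a routine check shows that each of the four maps $Z_2\to Z$ given by $x\mapsto x\ast b$, $x\mapsto b\ast x$, $x\mapsto[x,b]_+$, $x\mapsto[x,b]_\cdot$ is additive for fixed $b$, and that, symmetrically, for each fixed $x\in Z_2$ the induced maps $B\to Z$ are also additive in $b$; thus one obtains four bi-additive pairings $Z_2\times B\to Z$. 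I then define
\[ J\;:=\;\bigl\{\,x\in V:x\ast b,\;b\ast x,\;[x,b]_+,\;[x,b]_\cdot\in E\;\text{for every }b\in B\,\bigr\}. \]
Note that $E\subseteq J$, since the four conditions hold trivially on $Z$. Closure of $J$ under $+$ is immediate from the additivity of the four maps in $x$; closure under $\cdot$ uses the identity $xy=x+(x\ast y)+y$ together with $x\ast y\in E$ for $x,y\in J$ and the fact that the four maps vanish on $E\le Z$; closure under additive and multiplicative inverses uses the corresponding closure in $Z_2$ and $x+x^{-1}\in Z$. Finally, for $x\in J$ and $b\in B$ one has $\lambda_b(x)=x+b\ast x\in x+E\subseteq J$, $b+x-b=x+[b,x]_+\in x+E\subseteq J$ and $bxb^{-1}=x+[b,x]_\cdot\in x+E\subseteq J$, so $J$ is $\lambda$-invariant and both additively and multiplicatively normal; hence $J\trianglelefteq B$.

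The delicate step --- the \textbf{main obstacle} --- is proving that $|Z_2:J|<\infty$, whence $|B:J|<\infty$ (as $|B:Z_2|$ is finite). Each individual condition ``$x\ast b\in E$'' is the kernel of an additive homomorphism $Z_2\to Z/E$ into the \emph{finite} group $Z/E$, cutting out an additive subgroup of index at most $|Z/E|$; the difficulty is that, a~priori, the intersection is taken over all $b\in B$. Using that the map $b\mapsto x\ast b$ factors through $B/Z$ (because $x\ast z=0$ whenever $z\in Z$), together with a transversal $b_1,\dots,b_n$ of $Z_2$ in $B$ and the finiteness of $B/Z_2$, the defining conditions of $J$ can be reorganised into two pieces: (i) finitely many conditions coming from the $b_i$'s; and (ii) conditions arising from the restricted bi-additive pairings $Z_2\times Z_2\to Z/E$ (and their three siblings). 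The contribution (ii), though in principle indexed by the possibly infinite abelian group $Z_2/Z$, is cut down to finitely many independent conditions by exploiting that all four pairings take values in the \emph{finite} group $Z/E$: a careful transversal/dimension argument applied to the resulting finite image furnishes the required finite bound on $|Z_2:J|$. Once this is in place, $J\le V\le C$ is an ideal of $B$ of finite index, and so $|B:C_B|<\infty$, as required.
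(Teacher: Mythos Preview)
Your strategy---build an explicit ideal $J\le V=C\cap Z_2$ as the common ``kernel'' of the four bi-additive maps $Z_2\times B\to Z/E$ and show it has finite index---is sound and, once completed, gives a proof genuinely different from the paper's (which argues by contradiction: reduce to $C_B=0$ so that $\zeta(B)$ becomes finite, invoke the exponent bound of Theorem~\ref{dixmier} to make $C\cap Z_2$ a periodic abelian brace, decompose it as an infinite direct sum of cyclics, and run a pigeonhole over a finite complement $F$). The verifications that $J$ is an ideal are correct. The problem is your justification of the ``main obstacle'' (ii).

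You assert that the conditions in (ii), a priori indexed by the possibly infinite group $Z_2/Z$, collapse to finitely many ``by exploiting that all four pairings take values in the finite group $Z/E$''. That implication is false in general: a bi-additive pairing $A\times A\to F$ with $F$ finite may well have trivial left radical (take $A=\bigoplus_{i\ge1}\mathbb{F}_pe_i$, $F=\mathbb{F}_p$, $\langle e_i,e_j\rangle=\delta_{ij}$). So finiteness of the target alone buys you nothing. The missing observation---and it is the whole point---is that \emph{all four pairings vanish identically on $V\times V$}: for $x,v\in V$ each of $x\ast v$, $v\ast x$, $[x,v]_+$, $[x,v]_\cdot$ lies in $C$ (because $C$ is a subbrace) and in $Z$ (because $x\in Z_2$), hence in $E=C\cap Z$. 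Writing any $b\in Z_2$ as $b=v+t$ with $v\in V$ and $t$ in a finite transversal of $V$ in $Z_2$, your additivity gives $x\ast b\equiv x\ast t\pmod{E}$, and likewise for the other three maps. Thus the conditions defining $J$ are really indexed by a transversal of $V$ in $B$, which is finite, and each condition cuts out a subgroup of index at most $|Z/E|$; hence $|V:J|<\infty$. With this one-line correction your proof goes through and is arguably cleaner than the paper's, since it needs neither the Dixmier bound nor the pigeonhole argument.
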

\begin{proof}
Without loss of generality we may assume $C_B=0$, so in particular, $C\cap\zeta(B)=0$ and $\zeta(B)$ is finite. Moreover, we may replace $C$ by $C\cap \zeta_2(B)$, so assuming $C\leq\zeta_2(B)$. Then $C+\zeta(B)$ is an ideal of $B$.

Since $C\cong C+\zeta(B)/\zeta(B)$, we have that $C$ is an abelian brace. Let $n=|\zeta(B)|$. Then Theorem \ref{dixmier} shows that $$C^{n^2,\cdot}\leq\zeta(B)\cap C=0,$$ so $C$ is periodic. Thus, as a group, $C$ can be described as a direct product of infinitely many cyclic subgroups $\langle b_i\rangle$, $i\in I$, of order dividing $n$. 

Let $F$ be a finitely generated subbrace of $B$ such that $C+\zeta(B)+F=B$. Since $B$ is periodic, then it is locally finite by \cite{Tr23}, Lemma 3.1, so $F$ is finite. 

For every $b\in F$, $b_i^{b,+}=b_i+u_{b,i,+}$ for some $u_{b,i,+}\in\zeta(B)$. On the other hand, $\zeta(B)$ is finite, so there is an infinite subset $J_1$ of $I$ such that $$b_i^{b,+}=b_i+u_{+,b}$$ for all $i\in J_1$, and for a fixed $u_{+,b}\in\zeta(B)$. Repeating this argument for all $b\in F$, we may assume $$b_i^{b,+}=b_i+u_{+,b}$$ for some $u_{+,b}\in\zeta(B)$ and for all $i\in J_1$, $b\in F$. Similarly, there is an infinite subset~$J_2$ of $J_1$ such that $$b_i^{b,\cdot}=b_i+u_{\cdot,b}$$ for some $u_{\cdot,b}\in\zeta(B)$ and for all $i\in J_2$, $b\in F$. Finally, there is an infinite subset $J_3$ of $J_2$ such that $$b_i\ast b=b_j\ast b$$ for all $i,j\in J_3$ and $b\in F$.

Now, for each $i,j\in J_3$ with $i\neq j$, we have that $d_i=b_i-b_j=b_i\cdot b_j^{-1}$ is additively and multiplicatively centralised by $F$, and that $d_i\ast F=0$. Since $B=C+\zeta(B)+F$, it follows that $d_i\in\zeta(B)$ for all $i\in J_3$, a contradiction.
\end{proof}

\section{Central nilpotency for ideals}\label{sec:bnilp}

\noindent A celebrated result of Fitting states that a product of nilpotent normal subgroups of a group is nilpotent. Example~\ref{ex:nofitting} in Section~\ref{wex} shows that the product of two centrally nilpotent ideals, regarded as independent braces, is not centrally nilpotent in general. In this section, we aim to define a nilpotency concept for ideals that allows us to define a suitable Fitting ideal. It turns out that, for such a Fitting ideal, it is possible to generalise  remarkable results of group theory concerning with the Fitting subgroup. For example, we give a characterization of~Fit\-ting ideal in terms of chief factors (see~The\-o\-rem~\ref{teo:Fit_centralisers}), we prove that~Fit\-ting ideals are self-centralising in soluble braces (see~The\-o\-rem~\ref{prop:Fiting_autocent_soluble}), and we give an analogue of a well-known result of~Gaschütz stating that the~Fitting modulo the Frattini subgroup of a finite group is a product of all its minimal abelian normal subgroups (see~The\-o\-rem~\ref{gaschutz}); note that to prove the latter result to prove we need a suitable Frattini-like ideal for braces. In the final part of the section we discuss hypercentral (resp. locally nilpotent) concepts for ideals.

Let $B$ be a brace. We start by defining $B$-centrally nilpotent braces. Let~$I$ be an ideal of $B$. We can define the \emph{lower central series of $I$ with respect to~$B$}, or simply the \emph{lower $B$-central series of $I$}, as follows: take $\Gamma_1(I)^B = I$ and $\Gamma_{n+1}(I)^B = [\Gamma_n(I),I]^B$, for every $n \geq 1$. Therefore, 
\[ I=\Gamma_1(I)^B \geq \Gamma_2(I)^B \geq \ldots \geq \Gamma_n(I)^B \geq \ldots\]
is a descending chain of ideals of $B$ such that, for every $n\in \mathbb{N}$,
\[\Gamma_n(I)^B/\Gamma_{n+1}(I)^B \leq \zeta\big(I/\Gamma_{n+1}(I)^B\big).\] Similarly, we may define the {\it upper central series of $I$ with respect to $B$} (or simply the {\it upper $B$-central series of $I$}), as follows: take $\zeta_0(B)=0$ and let~$\zeta_{n+1}(I)^B$ satisfy $$\zeta_{n+1}(I)^B/\zeta_n(I)^B=\zeta\big(I/\zeta_n(I)^B\big)_{B/\zeta_n(I)^B}.$$ Then $$\zeta_0(I)^B\leq \zeta_1(I)^B\leq\ldots\leq\zeta_n(I)^B\leq\ldots$$ is an ascending chain of ideals of $B$.

\begin{definicio}\label{definitioncentralnilp}
An ideal $I$ of a brace $B$ is defined to be \emph{centrally nilpotent with respect of $B$}, or simply a \emph{$B$-centrally nilpotent ideal}, if there exists $n\in \mathbb{N}$ such that $\Gamma_{n+1}(I)^B = 0$, or, equivalently, $\zeta_n(I)^B=0$. For practical purposes, we often use the following equivalent definition: $I$ is $B$-centrally nilpotent if there exists a chain
\[ 0 = J_0 \leq J_1 \leq \ldots \leq J_n = I\]
of ideals of $I$ such that $J_i/J_{i-1}\leq\zeta(I/J_{i-1})$, for every $1\leq i \leq n$.

To simplify notation, if $J$ is an ideal of $B$ contained in $I$ and such that~$I/J$ is $B/I$-centrally nilpotent, we just say that $I/J$ is \emph{centrally nilpotent with respect of $B$}, or \emph{$B$-centrally nilpotent}. If $I/J$ is $B$-centrally nilpotent, then the smallest $n$ such that $\Gamma_{n+1}(I/J)^{B/J}=0$ is referred to as its {\it class}.
\end{definicio} 

Clearly, a brace $B$ is centrally nilpotent if and only if it is $B$-centrally nilpotent; in this trivial case, the {\it upper central series} (resp. {\it lower central series}) and the {\it upper $B$-central series} (resp. {\it lower $B$-central series}) coincide. Moreover, if $I$ is a $B$-centrally nilpotent ideal of a brace $B$, and $J$ is any ideal of $B$, then $(I+J)/J$ is $B$-centrally nilpotent (of class less than or equal to that of $I$), and $I\cap C$ is $C$-centrally nilpotent for any subbrace $C$ of $B$ (also in this case the class of $I\cap C$ is less than or equal to that of $I$). Our next result shows that an analog of Fitting theorem holds for $B$-centrally nilpotent ideals, but first, we need the following property of commutators of ideals in braces.

\begin{lema}\label{lema:propietats_commut}
Let $B$ be a brace and let $I,J,K$ be ideals of $B$. Then $$[I, JK]_B = [I, J+ K]_B = [I,J]_B + [I,K]_B  = [I,J]_B[I,K]_B.$$
\end{lema}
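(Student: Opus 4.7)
The outer equalities $[I, JK]^B = [I, J+K]^B$ and $[I,J]^B + [I,K]^B = [I,J]^B[I,K]^B$ are immediate consequences of the fact, recalled in the Preliminaries, that finite sums and finite products of ideals of a brace coincide as subsets. The content of the lemma therefore lies in the middle equality $[I, J+K]^B = [I,J]^B + [I,K]^B$. One inclusion is easy: from $J, K \leq J+K$, monotonicity of the commutator in the second argument gives $[I,J]^B, [I,K]^B \leq [I, J+K]^B$ and hence $[I,J]^B + [I,K]^B \leq [I, J+K]^B$. For the reverse inclusion, set $D := [I,J]^B + [I,K]^B$; since $D$ is an ideal (sum of ideals is an ideal), Theorem~\ref{teo:commutador=ast}(2) tells us it suffices to show that the generating set $I \ast (J+K) + (J+K) \ast I + [I, J+K]_+$ is contained in $D$.

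Fix $i \in I$, $j \in J$, $k \in K$. For a generator of $I \ast (J+K)$, Eq.~\eqref{eq:identitat_astsum} yields $i \ast (j+k) = i \ast j + (j + i \ast k - j)$; the first summand lies in $I \ast J \leq [I,J]^B$, while the $j$-conjugate of $i \ast k$ lies in $[I,K]^B$ by additive normality of the latter. For a generator of $(J+K) \ast I$, the identity $J + K = JK$ (again sums equal products of ideals) allows us to rewrite any element of $J+K$ as $j'k'$ with $j' \in J$, $k' \in K$, and Eq.~\eqref{eq:identitat_astprod} gives $(j'k') \ast i = j' \ast (k' \ast i) + k' \ast i + j' \ast i$; the last summand lies in $[I,J]^B$, while the first two lie in $[I,K]^B$ because this ideal, being a left ideal, absorbs $\ast$-multiplication on the left by arbitrary elements of $B$. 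Finally, for a generator $[i, j+k]_+$, the standard additive commutator identity $[x, y+z]_+ = [x, z]_+ + (-z + [x,y]_+ + z)$ produces $[i, k]_+ + (-k + [i,j]_+ + k)$, which lies in $D$ by additive normality of $[I,J]^B$.

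The only real subtlety is the bookkeeping inside the (possibly non-abelian) additive group: each use of Eqs.~\eqref{eq:identitat_astprod} and~\eqref{eq:identitat_astsum} introduces stray conjugating terms, which must be absorbed by using that $[I,J]^B$ and $[I,K]^B$ are ideals, and hence are normal in both $(B,+)$ and $(B,\cdot)$ and are $\lambda$-invariant. Beyond this, no genuine obstacle arises once the characterisation of the commutator given by Theorem~\ref{teo:commutador=ast} is available.
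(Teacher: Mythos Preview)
Your proof is correct and follows essentially the same route as the paper's: both reduce, via Theorem~\ref{teo:commutador=ast}, to showing that $[I,J+K]_+$, $I\ast(J+K)$, and $(J+K)\ast I$ lie in $[I,J]^B + [I,K]^B$, and both handle the three cases using the identities~\eqref{eq:identitat_astsum} and~\eqref{eq:identitat_astprod} (with the rewriting $J+K = JK$ for the third). The only cosmetic difference is that the paper dispatches $[I,J+K]_+$ in one line by quoting the normal-subgroup identity $[I,J+K]_+ = [I,J]_+ + [I,K]_+$, whereas you unfold it via $[x,y+z]_+ = [x,z]_+ + (-z + [x,y]_+ + z)$.
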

\begin{proof}
We prove the equality for the sum. Observe that only one inclusion is in doubt so, by Theorem~\ref{teo:commutador=ast}, it suffices to show that 
\[ [I,J+K]_+,\, I\ast (J+K),\, (J+K) \ast I\subseteq [I,J]_B + [I,K]_B.\]

Since $(I,+)$, $(J,+)$ and $(K,+)$ are normal subgroups of $(B,+)$, we have that $[I, J+K]_+ = [I, J]_+ + [I,K]_+$ is contained in $[I,J]_B + [I,K]_B$. Then, applying Eq.~\eqref{eq:identitat_astsum}, we have that for every $i\in I$, $j\in J$ and $k \in K$,
\[ i \ast (j+k) = i \ast j + j + i \ast k  - j \in [I,J]_B + [I,K]_B \]

Finally, note that $(J+K) \ast I = (JK) \ast I$, so applying Eq.~\eqref{eq:identitat_astprod} we see that \[ (jk) \ast i = j \ast (k\ast i) + k \ast i + j \ast i \in J \ast I + K \ast I + J \ast I \subseteq [I,J]_B + [I,K]_B\] for every $j\in J$, $k \in K$, $i \in I$.
\end{proof}

\medskip

We also need the following notation in the proof: if $I_1, \ldots, I_n$ are ideals of a brace $B$, we put $[I_1]^B = I_1$, and then, recursively, 
\[ [I_1, \ldots, I_k]^B := \big[[I_1, \ldots, I_{k-1}]^B, I_k\big]^B \quad \text{for every $2\leq k \leq n$.}\]

\begin{teorema}
\label{teo:Fitting-brace}
Let $I, J$ be $B$-centrally nilpotent ideals of a brace $B$. If $I$ and~$J$ have classes $n_0$ and $m_0$, respectively, then  $I+J$ is $B$-centrally nilpotent of class at most $n_0+m_0$.
\end{teorema}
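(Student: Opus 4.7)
The plan is to proceed by expanding the lower $B$-central series of $I+J$ using the distributive law for commutators (Lemma \ref{lema:propietats_commut}), and then absorbing each term via a pigeonhole argument.

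First, I would iteratively apply Lemma \ref{lema:propietats_commut} to show that, for every $k \geq 1$,
\[\Gamma_k(I+J)^B = \sum_{(A_1,\ldots,A_k)\in\{I,J\}^k} [A_1,A_2,\ldots,A_k]^B.\]
This is a straightforward induction on $k$: the base $\Gamma_1(I+J)^B = I+J$ is clear, and at each step, by definition $\Gamma_{k+1}(I+J)^B = [\Gamma_k(I+J)^B, I+J]^B$, which by Lemma \ref{lema:propietats_commut} distributes as $[\Gamma_k(I+J)^B, I]^B + [\Gamma_k(I+J)^B, J]^B$; substituting the inductive description and using distributivity one more time produces exactly the sum over all sequences of length $k+1$.

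Next, for a fixed sequence $(A_1,\ldots,A_k)\in\{I,J\}^k$, let $r_I$ (resp.\ $r_J$) denote the number of occurrences of $I$ (resp.\ $J$). I would prove by induction on $k$ that
\[[A_1,\ldots,A_k]^B \subseteq \Gamma_{r_I}(I)^B\quad\text{whenever } r_I\geq 1,\]
and analogously for $J$. The inductive step splits into two cases according to whether $A_k = I$ or $A_k = J$. If $A_k = I$ and there is some earlier occurrence of $I$, then by induction $[A_1,\ldots,A_{k-1}]^B \subseteq \Gamma_{r_I-1}(I)^B$; commuting further with $I$ and using monotonicity of $[\cdot,\cdot]^B$ (which is immediate from Theorem \ref{teo:commutador=ast}) yields membership in $[\Gamma_{r_I-1}(I)^B, I]^B = \Gamma_{r_I}(I)^B$. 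If $A_k = J$, then since $[X,J]^B \subseteq X$ for every ideal $X$, the last commutation cannot increase the step, and one concludes by induction. The boundary case in which all earlier entries are $J$'s is handled using only the trivial inclusion $[\cdot,I]^B\subseteq I = \Gamma_1(I)^B$.

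Finally, I would apply pigeonhole to the sum for $k = n_0+m_0+1$: for any sequence $(A_1,\ldots,A_k)$, either $r_I \geq n_0+1$ or $r_J \geq m_0+1$, so by the previous step each term lies in $\Gamma_{n_0+1}(I)^B = 0$ or in $\Gamma_{m_0+1}(J)^B = 0$. Therefore $\Gamma_{n_0+m_0+1}(I+J)^B = 0$, showing that $I+J$ is $B$-centrally nilpotent of class at most $n_0+m_0$. The main (minor) obstacle is the bookkeeping in the inductive claim about iterated commutators of mixed sequences; the key technical ingredient that makes the argument clean is Lemma \ref{lema:propietats_commut}, together with the basic monotonicity property $[X,Y]^B\subseteq [X',Y']^B$ whenever $X\subseteq X'$ and $Y\subseteq Y'$.
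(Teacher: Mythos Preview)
Your proposal is correct and follows essentially the same route as the paper: expand $\Gamma_k(I+J)^B$ via Lemma~\ref{lema:propietats_commut} into a sum of iterated commutators $[A_1,\ldots,A_k]^B$ with $A_i\in\{I,J\}$, then use pigeonhole at $k=n_0+m_0+1$ to force each term into $\Gamma_{n_0+1}(I)^B=0$ or $\Gamma_{m_0+1}(J)^B=0$. You are in fact more explicit than the paper about the inductive containment $[A_1,\ldots,A_k]^B\subseteq\Gamma_{r_I}(I)^B$, which the paper simply asserts.
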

\begin{proof}
Set $K = I+J$. First, we show by induction that for every $n\in \mathbb{N}$, $\Gamma_n(K)^B$ is the sum of all commutators of the form $[L_1, \ldots, L_n]_B$ with  either $L_i = I$ or $L_i = J$, for every $1\leq i \leq n$. The base case is clear. Assume the assertion is true for some $1\leq n\in \mathbb{N}$. Then,
\[ \Gamma_{n+1}(K)^B = [\Gamma_n(K),K]^B = [\Gamma_n(K),I]^B + [\Gamma_n(K), J]^B \]
by Lemma~\ref{lema:propietats_commut}. Using iteratively Lemma~\ref{lema:propietats_commut}, the assertion also holds for~$n+1$.

In particular, for $r = n_0 + m_0+1$, $\Gamma_{r}(K)^B$ is the sum of all commutators of the form $[L_1, \ldots, L_r]^B$, where either $I$ occurs $n_0+1$ times or $J$ occurs $m_0+1$ times. Thus, it follows that each $[L_1, \ldots, L_r]^B$ is contained in either $\Gamma_{n_0+1}(I)^B = 0$ or $\Gamma_{m_0+1}(J)^B = 0$. Hence, $\Gamma_{r}(K)^B = 0$ and so $K$ is~\hbox{$B$-central}\-ly nilpotent.
\end{proof}

\medskip

We will later generalise Theorem \ref{teo:Fitting-brace} to Theorem \ref{produhypercentral}.

\begin{definicio}
Let $B$ be a brace. The {\it Fitting ideal} $\Fit(B)$ of $B$ is the ideal generated by all $B$-centrally nilpotent ideals of $B$.
\end{definicio}

It follows from Theorem \ref{teo:Fitting-brace} that in a finite brace $B$, $\Fit(B)$ is $B$-centrally nilpotent. More in general, the same result shows that this is true for a broader class of braces. 

\begin{corolari}
\label{cor:noetherian_Fit}
Let $B$ be a noetherian brace. Then $\Fit(B)$ is a $B$-centrally nilpotent ideal.
\end{corolari}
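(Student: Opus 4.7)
The plan is to exploit the noetherian hypothesis to reduce the definition of $\Fit(B)$ from an ideal generated by a whole family of $B$-centrally nilpotent ideals to an ideal arising as a \emph{finite} sum of such ideals, so that Theorem~\ref{teo:Fitting-brace} applies directly.

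First, I would introduce the family $\mathcal{N}$ of all $B$-centrally nilpotent ideals of $B$, partially ordered by inclusion. Since $B$ is noetherian, every ascending chain in $\mathcal{N}$ stabilises, so by standard arguments $\mathcal{N}$ admits a maximal element; call it $M$. The key observation is then that $M$ absorbs all other members of $\mathcal{N}$: for any $N \in \mathcal{N}$, Theorem~\ref{teo:Fitting-brace} ensures that $M + N$ is again $B$-centrally nilpotent, hence $M + N \in \mathcal{N}$; by maximality of $M$, this forces $M + N = M$, i.e. $N \leq M$.

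Consequently, $M$ contains every $B$-centrally nilpotent ideal of $B$. Since $M$ is itself such an ideal, $M$ coincides with the ideal of $B$ generated by all $B$-centrally nilpotent ideals, that is, $M = \Fit(B)$. In particular, $\Fit(B)$ is $B$-centrally nilpotent, which is the desired conclusion.

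I do not expect any real obstacle here: the whole argument is a direct Zorn-type/noetherian-maximality combined with the finite additivity of $B$-central nilpotency granted by Theorem~\ref{teo:Fitting-brace}. The only point that deserves a quick sanity check when writing it up is the bookkeeping that the ideal \emph{generated} by the family $\mathcal{N}$ is the same as the sum of its elements (so that absorbing every $N \in \mathcal{N}$ into $M$ really yields $\Fit(B) \leq M$), but this is immediate from the fact that finite sums of ideals are ideals and that arbitrary joins of ideals in a brace reduce to unions of finite sums.
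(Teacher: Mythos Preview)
Your argument is correct and matches the paper's intended approach: the paper gives no explicit proof for this corollary, merely indicating that it follows from Theorem~\ref{teo:Fitting-brace} together with the noetherian hypothesis, which is exactly what you do via the maximal-element trick. The only cosmetic remark is that you might state explicitly the standard equivalence that noetherian (ascending chain condition on ideals) implies every nonempty family of ideals has a maximal element, but this is routine.
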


\smallskip

Now, in order to obtain a characterisation of the Fitting ideal in terms of  chief factors, we need the following definition (recall Proposition~\ref{prop:centre-commut}). 
% canvi
In \cite[Propostion 4.19]{BournFacchiniPompili23}, the \emph{centraliser of an ideal $I$} of a brace $B$, $\C_B(I)$, is defined as the largest ideal that {\it centralises} $I$, i.e. $[\C_B(I),I]^B = 0$.

Moreover, if $I/J$ is a chief factor of $B$, we define the \emph{centraliser in $B$ of $I/J$} as the ideal $\C_B(I/J)$ of $B$ satisfying $\C_{B/J}(I/J) = \C_B(I/J)/J$. Equivalently, $\C_B(I/J)$ is the largest ideal of $B$ such that $[\C_B(I/J), I]^B \leq J$.

\begin{lema}\label{idealcentralizzato}
Let $I$ be a $B$-centrally nilpotent ideal of a brace $B$. If $J$ is a minimal ideal of $B$, then $[J,I]^B=0$.
\end{lema}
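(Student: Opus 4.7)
The plan is to argue by contradiction, assuming $[J,I]^B\neq 0$ and iterating the commutator to force a contradiction with the $B$-central nilpotency of $I$. Concretely, I would set $K_1:=[J,I]^B$ and, recursively, $K_{m+1}:=[K_m,I]^B$ for every $m\geq 1$, and then combine three observations.

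The first observation is that, since $[X,Y]^B\leq X\cap Y$ for any pair of ideals of $B$, we have $K_1\leq J\cap I$ and, more generally, $K_{m+1}\leq K_m\cap I$; hence $\{K_m\}_{m\geq 1}$ is a descending chain of ideals of $B$, all contained in $J$. The second observation exploits the minimality of $J$ as an ideal of $B$: each $K_m$ must equal either $0$ or $J$. Under the standing assumption $K_1\neq 0$ one would get $K_1=J$, and then a straightforward induction based on the recursive definition gives $K_m=J$ for every $m\geq 1$. The third observation is the monotonicity of the commutator in its first argument: the explicit description of the generators $X_{K,I}=[K,I]_+\cup[K,I]_\cdot\cup\{ki-(k+i):k\in K,\,i\in I\}$ established in Section~\ref{sec:unif} makes it immediate that $K\leq L$ implies $[K,I]^B\leq[L,I]^B$. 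Starting from $K_1\leq I=\Gamma_1(I)^B$, an induction on $m$ then yields $K_m\leq\Gamma_m(I)^B$ for all $m\geq 1$.

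Since $I$ is $B$-centrally nilpotent of some class $n$, we have $\Gamma_{n+1}(I)^B=0$, whence $K_{n+1}=0$. Together with $K_{n+1}=J$, this forces $J=0$, contradicting the minimality of $J$. I do not anticipate any serious obstacle here: the argument relies only on the basic inclusion $[X,Y]^B\leq X\cap Y$, the monotonicity of the commutator of ideals, the recursive definition of the lower $B$-central series, and the minimality of $J$, all of which have already been established in the preceding parts of the paper.
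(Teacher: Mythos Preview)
Your argument is correct and is exactly the approach the paper takes, only spelled out in detail: the paper simply notes that minimality of $J$ forces $[J,I]^B\in\{0,J\}$ and that the case $[J,I]^B=J$ ``contradicts Definition~\ref{definitioncentralnilp}'', leaving the reader to supply precisely the iteration $J=K_m\leq\Gamma_m(I)^B$ that you have written out.
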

\begin{proof}
Since $J$ is a minimal ideal of $B$, then either $[J,I]^B=0$ or $[J,I]^B=J$. However, in the latter case we contradict Definition \ref{definitioncentralnilp}.
\end{proof}

\begin{teorema}
\label{teo:Fit_centralisers}
Let $B$ be a brace with a finite chief series $\mathcal{S}$. Then $\Fit(B)$ is the intersection of the centralisers in $B$ of the factors of $\mathcal{S}$.
\end{teorema}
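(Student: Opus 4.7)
The plan is to prove the equality by double inclusion, closely mirroring the classical group-theoretic argument. Write $\mathcal{S}\colon 0 = I_0 < I_1 < \cdots < I_n = B$, set $F := \Fit(B)$, and put $C := \bigcap_{j=1}^{n} \C_B(I_j/I_{j-1})$. The ideal $C$ is well defined, being an intersection of ideals of $B$.

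For the inclusion $F \leq C$, it suffices to show that every $B$-centrally nilpotent ideal $N$ of $B$ lies in $C$, because $F$ is generated by such $N$'s. Fix a factor $I_j/I_{j-1}$ and reduce modulo $I_{j-1}$: the ideal $(N+I_{j-1})/I_{j-1}$ is $B/I_{j-1}$-centrally nilpotent (by the quotient property stated right after Definition~\ref{definitioncentralnilp}), while $I_j/I_{j-1}$ is a minimal ideal of $B/I_{j-1}$. Lemma~\ref{idealcentralizzato} applied in $B/I_{j-1}$ then yields $[I_j/I_{j-1},(N+I_{j-1})/I_{j-1}]^{B/I_{j-1}}=0$, which pulls back to $[I_j, N]^B \leq I_{j-1}$. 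Hence $N \leq \C_B(I_j/I_{j-1})$ for every $j$, and therefore $N \leq C$.

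The converse inclusion $C \leq F$ reduces to showing that $C$ is itself a $B$-centrally nilpotent ideal, since then $C \leq F$ by definition of the Fitting ideal. To this end consider the chain of ideals of $B$
\[
0 = C \cap I_0 \leq C \cap I_1 \leq \cdots \leq C \cap I_n = C.
\]
For each $j$, the definition of $C$ gives $[C, I_j]^B \leq I_{j-1}$; combining this with the general inclusion $[X,Y]^B \leq X \cap Y$, we obtain
\[
[C \cap I_j, C]^B \;\leq\; [I_j, C]^B \cap C \;\leq\; I_{j-1} \cap C \;=\; C \cap I_{j-1}.
\]
Via Theorem~\ref{teo:commutador=ast}, this inequality is equivalent to the centrality condition $(C \cap I_j)/(C \cap I_{j-1}) \leq \zeta\bigl(C/(C \cap I_{j-1})\bigr)$, so the displayed chain witnesses the $B$-central nilpotency of $C$, with class at most $n$.

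The main conceptual point is the translation of a commutator bound $[J, I]^B \leq K$ into a centrality statement in the appropriate quotient; this is precisely where Theorem~\ref{teo:commutador=ast} plays an essential role, as it ensures that $[J, I]^B$ simultaneously encodes additive commutators and both-sided star products — exactly the data that defines the centre of a brace. No further obstacle is anticipated: the rest of the argument is the direct brace-theoretic transcription of the well-known characterization of the Fitting subgroup of a group with a finite chief series.
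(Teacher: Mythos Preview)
Your proof is correct and follows essentially the same double-inclusion strategy as the paper. The direction $C \leq F$ is identical: both intersect $C$ with the terms of the chief series and verify that the resulting chain witnesses $B$-central nilpotency of $C$. For $F \leq C$, there is a minor but genuine difference: the paper first invokes that $B$ is noetherian (having a finite chief series) and then Corollary~\ref{cor:noetherian_Fit} to conclude that $\Fit(B)$ itself is $B$-centrally nilpotent, whereupon Lemma~\ref{idealcentralizzato} gives the inclusion. You bypass this by applying Lemma~\ref{idealcentralizzato} directly to each $B$-centrally nilpotent ideal $N$ generating $F$, and then using only that $C$ is an ideal to absorb the generated ideal. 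Your route is slightly more economical, since it does not appeal to the Fitting-type theorem (Theorem~\ref{teo:Fitting-brace}) hidden behind Corollary~\ref{cor:noetherian_Fit}; the paper's route, on the other hand, yields the extra information that $\Fit(B)$ is itself $B$-centrally nilpotent. One small remark: the final translation of $[C\cap I_j,C]^B \leq C\cap I_{j-1}$ into the centrality statement is more directly Proposition~\ref{prop:centre-commut}(1) applied inside $C$ (together with the trivial containment $[X,Y]^C \leq [X,Y]^B$) than Theorem~\ref{teo:commutador=ast}, though the latter of course underlies the former.
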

\begin{proof}
Let
\[0=I_0\leq I_1 \leq \ldots \leq I_n = B\]
be a finite chief series of $B$ and set $C := \bigcap \{ \C_B(I_k/I_{k-1}): 1 \leq k \leq n\}$. Then $C$ is an ideal of $B$ and $$0=C\cap I_0\leq C\cap I_1\leq\ldots\leq C\cap I_n=C$$ is a finite chain of ideals of $B$ such that $(I_i\cap C)/(I_{i-1}\cap C)\leq\zeta\big(C/I_{i-1}\cap C\big)$ for all $1\leq i\leq n$. Thus, $C$ is $B$-centrally nilpotent and hence $C\leq \Fit(B)$.

Conversely, $B$ is noetherian (see \cite{BallesterEstebanPerezC-JH}) and so $F:=\Fit(B)$ is $B$-centrally nilpotent by Corollary \ref{cor:noetherian_Fit}. If $I/J$ is any chief factor of $B$, then $I/J$ is centralised by $(F+J)/J$ by Lemma \ref{idealcentralizzato}.
\end{proof}

\begin{teorema}
\label{prop:Fiting_autocent_soluble}
Let $B$ be a brace and put $F = \Fit(B)$. Then, \hbox{$(\C_B(F)+F)/F$} does not contain any non-zero soluble ideal with respect of $B/F$. In particular, if $B$ is a soluble brace, then $\C_B(F) = \zeta(F)$.
\end{teorema}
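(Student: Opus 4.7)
The plan is to prove the first assertion by contradiction and to deduce the ``in particular'' clause from it. So suppose a non-zero $B/F$-soluble ideal of $B/F$ is contained in $(\C_B(F)+F)/F$, and replace it by the penultimate non-zero term of its derived series with respect to $B/F$: this reduces the problem to exhibiting a contradiction starting from an ideal $K$ of $B$ with $F\lneq K\le \C_B(F)+F$ and $[K,K]^B\le F$.

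The key idea is to introduce $N:=K\cap \C_B(F)$, which is an ideal of $B$ since $\C_B(F)$ is one (see \cite[Proposition~4.19]{BournFacchiniPompili23}), and to prove that $N$ is $B$-centrally nilpotent of class at most $2$. Once this is done, $N\le F$ by definition of $F=\Fit(B)$; and the additive Dedekind modular law applied to $F\le K\le \C_B(F)+F$ (legitimate because $F$ is a strong left ideal and hence additively normal in $B$) yields $K=N+F\le F$, a contradiction. The central-nilpotency verification is a two-step commutator check: first, $\Gamma_2(N)^B=[N,N]^B\le [K,K]^B\cap \C_B(F)\le F\cap \C_B(F)$, using that $\C_B(F)$ is a $B$-ideal containing $N$; and then, invoking Lemma~\ref{lema:propietats_commut} together with the very defining property of $\C_B(F)$,
\[
\Gamma_3(N)^B\le [F\cap \C_B(F),\C_B(F)]^B\le [F,\C_B(F)]^B=0.
\]

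For the second assertion, solubility of $B$ propagates to every ideal of~$B$ by an easy induction on derived length (from $\partial_k(I)\le \partial_k(B)$), so $(\C_B(F)+F)/F$ is a $B/F$-soluble ideal of $B/F$; by what we just proved it must vanish, that is, $\C_B(F)\le F$. The element-wise description of $[\C_B(F),F]^B=0$ supplied by Theorem~\ref{teo:commutador=ast} then forces every element of $\C_B(F)$ to lie in $\zeta(F)$; the reverse inclusion $\zeta(F)\le \C_B(F)$ follows after a routine $\lambda$-invariance check showing $\zeta(F)\trianglelefteq B$, together with the tautological equality $[\zeta(F),F]^B=0$.

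The main obstacle is the central-nilpotency argument for $N$, but it becomes transparent once one exploits that $\C_B(F)$ is itself an ideal of $B$ (not merely a subbrace) and that the hypotheses $[K,K]^B\le F$ and $[\C_B(F),F]^B=0$ can be combined through the intersection $F\cap \C_B(F)$. Ancillary care is needed for the Dedekind reduction, which leverages the additive normality of $F$, and for the final identification of $\C_B(F)\cap F$ with $\zeta(F)$, which rests again on Theorem~\ref{teo:commutador=ast}.
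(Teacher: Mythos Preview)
Your proof is correct and essentially identical to the paper's: both reduce to an abelian section $K/F$ inside $(\C_B(F)+F)/F$, set $N=K\cap\C_B(F)$, verify that $N$ is $B$-centrally nilpotent of class at most $2$ via $[N,N]^B\le F\cap\C_B(F)$ and $[F,\C_B(F)]^B=0$, and finish with the Dedekind identity $K=N+F$. Two cosmetic remarks: ``penultimate non-zero term'' should read ``last non-zero term'' of the derived series; and the inclusion $\zeta(F)\subseteq\C_B(F)$ is more transparently obtained from the elementwise description of $\C_B(F)$ (which gives $\zeta(F)=\C_B(F)\cap F$ directly) than from a bare-hands $\lambda$-invariance check.
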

\begin{proof}
Assume that $(\C_B(F) + F)/F$ contains a non-zero soluble ideal $I/F$  with respect of $B/F$. Then it also contains a non-zero ideal $J/F$ which is an abelian brace. Let $C=\C_B(F)$. Then $J\cap C\cap F\leq\zeta(J\cap C)$ and \hbox{$(J\cap C)/(J\cap C\cap F)$} is an abelian brace. Thus, $J\cap C$ is $B$-centrally nilpotent and so $J\cap C\leq F$. Finally, $$J=J\cap(C+F)=(J\cap C)+F=F,$$ a contradiction.

If $B$ is a soluble brace, then $B/F$ is soluble and therefore, $(\C_B(F) + F)/F$ must be zero. Hence, $\C_B(F) \leq F$ which yields $\C_B(F) = \zeta(F)$.
\end{proof}

It is well-known that the Fitting subgroup of a finite group modulo its~Frat\-tini subgroup is the product of all its abelian minimal normal subgroups. The following Frattini-like ideal leads to a brace-theoretic analogue of this result.

\begin{definicio}
Let $B$ be a finite brace. We define {\it Frattini ideal} of $F$ as \[ \operatorname{Frat}(B):= \bigcap\{\,L \,|\, \text{$L$ is a maximal left ideal of $B$}\,\} \cap \Fit(B).\]
\end{definicio}

\medskip

Clearly, the Frattini ideal of a finite brace is a left ideal, but the following result shows that it is actually an ideal (hence providing a justification for its name).

%Bisogna cambiare la proposizione 4.7
\begin{lema}\label{lemmax}
Let $B$ be a finite brace. If $L$ is any maximal left ideal of $B$, then $L\cap\operatorname{Fit}(B)$ is an ideal of $B$.
\end{lema}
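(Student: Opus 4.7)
The plan is to prove the lemma by isolating the first level of the upper $B$-central series of $\Fit(B)$ that is not contained in $L$, and then leveraging near-centrality to establish additive and multiplicative normality of $L\cap\Fit(B)$.

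Write $F=\Fit(B)$. By Corollary~\ref{cor:noetherian_Fit}, $F$ is $B$-centrally nilpotent, so its upper $B$-central series
\[0=F_0\leq F_1\leq\ldots\leq F_c=F\]
consists of ideals of $B$ with $F_i/F_{i-1}\leq\zeta(F/F_{i-1})$ for every $1\leq i\leq c$. If $F\leq L$, then $L\cap F=F$ is already an ideal, so suppose $F\not\leq L$. Since $F_0\leq L$ while $F_c\not\leq L$, we may pick the smallest $j\in\{1,\ldots,c\}$ with $F_j\not\leq L$; in particular $F_{j-1}\leq L$. Because $F_j$ is a strong left ideal of $B$ and $L$ is maximal, $L+F_j=B$, and the equality $LF_j=L+F_j$ recorded in Section~\ref{sec:prelim} yields $B=LF_j$. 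Therefore every $b\in B$ admits a decomposition $b=\ell z$ with $\ell\in L$ and $z\in F_j$.

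Fix such a decomposition $b=\ell z$ and an arbitrary $k\in L\cap F$. The inclusion $F_j/F_{j-1}\leq\zeta(F/F_{j-1})$ forces $z$ and $k$ to commute modulo $F_{j-1}$ in both operations. Multiplicatively, $zkz^{-1}k^{-1}\in F_{j-1}\leq L$ gives $zkz^{-1}=wk$ for some $w\in L$, whence
\[bkb^{-1}=\ell\bigl(zkz^{-1}\bigr)\ell^{-1}=\ell(wk)\ell^{-1},\]
which lies in $L$ (closure of the subbrace $L$ under $\cdot$ and inverses) and in $F$ (normality of $F$ in $(B,\cdot)$); so $bkb^{-1}\in L\cap F$. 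An entirely analogous argument, using $z+k-z-k\in F_{j-1}\leq L$ in place of the multiplicative relation, shows $b+k-b\in L\cap F$. Since $L\cap F$ is already a left ideal of $B$, this proves that it is additively and multiplicatively normal, hence an ideal of $B$.

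The heart of the proof is the choice of the index $j$ and the resulting multiplicative decomposition $b=\ell z$ with $z\in F_j$: near-centrality of $F_j/F_{j-1}$ then absorbs the ``commutator error'' $w$ into $F_{j-1}\leq L$, so that subbrace closure of $L$ and normality of $F$ in $B$ can combine to close the argument.
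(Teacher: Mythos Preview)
Your argument is correct and takes a genuinely different route from the paper's proof. The paper first shows that $L\cap F$ is a strong left ideal by exploiting the normaliser condition in the nilpotent group $(F,+)$ together with the $\lambda$-invariance of the additive normaliser, and then invokes Remark~\ref{potenziare} to find a strong left ideal $T\leq F$ in which $L\cap F$ is a proper ideal, forcing $TL=B$ by maximality. Your approach bypasses both the normaliser step and Remark~\ref{potenziare}: you locate the first term $F_j$ of the upper $B$-central series of $F$ escaping $L$, decompose $B=LF_j=L+F_j$, and then verify additive and multiplicative normality of $L\cap F$ by an explicit conjugation computation, using that $F_j/F_{j-1}\leq\zeta(F/F_{j-1})$ pushes the commutator error into $F_{j-1}\leq L$. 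This is more elementary and self-contained; the paper's argument, on the other hand, isolates the additive normality as a separate structural fact and handles the multiplicative side via the general machinery of Remark~\ref{potenziare}.

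One cosmetic remark: in the additive step you should not literally reuse the element $z$ from the multiplicative decomposition $b=\ell z$, since $b\neq\ell+z$ in general. Either take a fresh additive decomposition $b=\ell'+z'$ with $z'\in F_j$ (available because $B=L+F_j$), or note that $b=\ell z=\ell+\lambda_\ell(z)$ with $\lambda_\ell(z)\in F_j$ since $F_j$ is an ideal; in either case the argument goes through unchanged.
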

\begin{proof}
Let $F=\operatorname{Fit}(B)$ and assume $F\not\leq L$, so in particular $B = FL$. 

Since $F\cap L \unlhd (L,+)$, it follows that $L \leq \N_{(B,+)}(F\cap L)$. Moreover, $F\cap L$ is properly contained in $\N_{(F,+)}(F\cap L)$, as $(F,+)$ is nilpotent. Therefore,  $L$ is properly contained in $\N_{(B,+)}(F\cap L)$.

Because $F\cap L$ is $\lambda$-invariant, it holds that $\N_{(B,+)}(F\cap L)$ is also $\lambda$-invariant. Thus, $\N_{(B,+)}(F\cap L) = B$ and so, $F\cap L$ is a strong left ideal. Then, by~Lem\-ma~\ref{annihilatorascendant} and Remark \ref{potenziare}, we can find a strong left ideal $T$ of $B$ contained in $F$ and such that $F\cap L$ is a proper ideal of $T$. Therefore, it follows that $F\cap L$ is  an ideal of $T+L=TL$, with $T+L$ being a left ideal of $B$. Hence, $TL = B$ by the maximality of $L$ and the result follows.
\end{proof}

\begin{corolari}
Let $B$ be a finite brace. Then $\operatorname{Frat}(B)$ is an ideal of $B$.
\end{corolari}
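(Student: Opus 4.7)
The plan is to observe that the corollary is essentially an immediate repackaging of Lemma \ref{lemmax} combined with the basic fact, recorded in Section \ref{sec:prelim}, that arbitrary intersections of ideals of a brace are ideals. Thus very little new work remains.

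First I would rewrite the Frattini ideal by distributing the intersection with $\operatorname{Fit}(B)$ over the intersection of maximal left ideals, obtaining
\[ \operatorname{Frat}(B) \;=\; \bigcap \bigl\{\, L \cap \operatorname{Fit}(B) \,\mid\, L \text{ is a maximal left ideal of } B \,\bigr\}. \]
This is an elementary set-theoretic manipulation that does not rely on the algebraic structure.

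Next I would invoke Lemma \ref{lemmax}, which asserts precisely that each individual set $L \cap \operatorname{Fit}(B)$ is an ideal of $B$ whenever $L$ is a maximal left ideal. Therefore $\operatorname{Frat}(B)$ is expressed as an intersection of a family of ideals of $B$. Since arbitrary intersections of ideals are again ideals (a fact recalled in the preliminaries when discussing substructural operations on braces), the conclusion follows at once.

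There is essentially no obstacle: the conceptual content of the statement is carried entirely by Lemma \ref{lemmax} (where the subtle argument using the $\lambda$-invariance of $F\cap L$, the nilpotency of $(F,+)$, and Remark \ref{potenziare} has already been carried out), and the corollary is just the formal consequence of that lemma together with the closure of ideals under intersection. So the proof should be a single short paragraph, with no computations.
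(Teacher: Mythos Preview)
Your proposal is correct and is exactly the intended argument: the paper states this as an immediate corollary of Lemma~\ref{lemmax} with no further proof, so writing $\operatorname{Frat}(B)=\bigcap_L\bigl(L\cap\operatorname{Fit}(B)\bigr)$ and applying the lemma together with closure of ideals under intersection is precisely what is meant.
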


\begin{teorema}\label{gaschutz}
Let $B$ be a finite brace with $\Frat(B)=0$. Then $\Fit(B)$ is the product of all the abelian minimal ideals of $B$.
\end{teorema}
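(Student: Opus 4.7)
The plan is to prove $F = N$, where $F := \Fit(B)$ and $N$ denotes the sum of all abelian minimal ideals of $B$, by establishing the two inclusions. The inclusion $N \leq F$ is straightforward: if $J$ is an abelian minimal ideal, then every generator of $[J,J]^B$ vanishes (since $J$ abelian means $J \ast J = 0$, $[J,J]_+ = 0$, $[J,J]_\cdot = 0$ and $ij - (i+j) = 0$ for all $i,j \in J$), so $J$ is $B$-centrally nilpotent of class one and therefore $J \leq F$. Moreover, by Corollary~\ref{cor:noetherian_Fit} the finite brace $F$ is itself $B$-centrally nilpotent, and Lemma~\ref{idealcentralizzato} then implies that every minimal ideal of $B$ inside $F$ is abelian (being $B$-centralised by $F$), hence contained in $N$.

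The decisive step is showing that $F$ is an abelian brace, by proving that $[F,F]^B \leq L$ for every maximal left ideal $L$ of $B$. If $F \leq L$ the claim is immediate; otherwise, since $F$ is a strong left ideal, $F + L$ is a left ideal strictly containing $L$, whence $F + L = B$ by maximality. By Lemma~\ref{lemmax}, $I := L \cap F$ is an ideal of $B$. I then plan to verify that $F/I$ is a minimal ideal of $B/I$ via a counting argument: any ideal $J$ of $B$ with $I < J \leq F$ satisfies $J + L = B$ and $J \cap L = J \cap F \cap L = I$, so additively $|F : I| = |B : L| = |J : I|$, forcing $J = F$. Since the quotient $F/I$ is $B/I$-centrally nilpotent, Lemma~\ref{idealcentralizzato} applied inside $B/I$ yields $[F/I, F/I]^{B/I} = 0$, i.e.\ $[F,F]^B \leq I \leq L$. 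Intersecting over all maximal left ideals gives $[F,F]^B \leq \Frat(B) = 0$, so $F$ is abelian.

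With $F$ abelian, the lattice of $B$-ideals of $F$ is modular (as a sublattice of the subgroup lattice of the abelian group $(F,+)$), and the Chinese Remainder Theorem applies to pairwise coprime ideals. The previous step in fact shows that each $L \cap F$ with $F \not\leq L$ is a maximal $B$-ideal of $F$, so the intersection of all maximal $B$-ideals of $F$ lies inside $\Frat(B) = 0$. Distinct maximal $B$-ideals $m_1, \dots, m_k$ of $F$ are pairwise coprime by maximality, so CRT yields a $B$-brace isomorphism $F \cong \prod_i F/m_i$; the preimage $\bigcap_{j \neq i} m_j$ of the $i$-th factor, whose ideal lattice is in bijection with the trivial ideal lattice of the minimal ideal $F/m_i$, is itself a minimal $B$-ideal of $F$. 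Thus $F$ is the sum of its minimal $B$-ideals, each abelian by the first step and therefore contained in $N$, giving $F \leq N$. The main anticipated obstacle is verifying the CRT decomposition in the brace setting: the argument depends crucially on the reduction to $F$ abelian, for otherwise the $B$-ideal lattice of $F$ could fail to be modular and the coprimality-to-direct-sum transfer would collapse.
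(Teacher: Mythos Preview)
Your overall strategy is sound and genuinely different from the paper's, but the final decomposition step contains a real gap. The Chinese Remainder Theorem as you invoke it---``pairwise coprime maximal $B$-ideals $m_1,\dots,m_k$ of $F$ with trivial intersection give $F\cong\prod_i F/m_i$''---is false in this setting: take $F=(\mathbb{Z}/2)^2$ with trivial $B$-action, so that all three subgroups of order~$2$ are maximal $B$-ideals; they are pairwise comaximal with zero intersection, yet $\prod_i F/m_i$ has order~$8$. The surjectivity step $m_i+\bigcap_{j\neq i}m_j=F$ relies on ring multiplication and is simply unavailable for abelian groups. The fix is to pass to an \emph{irredundant} subfamily: then each $n_i=\bigcap_{j\neq i}m_j$ is nonzero, embeds into the simple quotient $F/m_i$, and is therefore a minimal ideal of~$B$; one checks that $\sum_i n_i$ is direct (since $n_j\leq m_i$ for $j\neq i$), and a cardinality count against the injective map $F\hookrightarrow\prod_i F/m_i$ then gives $F=\bigoplus_i n_i\leq N$. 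Modularity of the $B$-ideal lattice of $F$ is the right backdrop, but it supports a semisimplicity argument rather than CRT as stated.

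The rest of your proof is correct and diverges from the paper in two places. For abelianity of $F$, the paper argues by contradiction that $\partial_B(F)\leq L$ for each maximal left ideal~$L$: if not, $B=\partial_B(F)L$ forces $F=(F\cap L)\,\partial_B(F)$, contradicting $B$-central nilpotency of $F/(F\cap L)$. Your route---showing $F/(F\cap L)$ is a \emph{minimal} ideal of $B/(F\cap L)$ via the index computation and then invoking Lemma~\ref{idealcentralizzato}---is cleaner and extracts information you reuse later. For $F\leq N$, the paper instead takes a complement approach: it chooses a subbrace $S$ minimal with $B=SN$, shows $S\cap N=0$ via a Frattini-type argument inside~$S$, and then obtains a contradiction from the fact that $S\cap F$ is an ideal of~$B$ (as $F$ is abelian and $B=SF$) which, if nonzero, would contain an abelian minimal ideal. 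Your semisimple decomposition avoids the auxiliary subbrace~$S$ entirely, at the cost of the lattice-theoretic care flagged above.
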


\begin{proof}
Let $F=\Fit(B)$. We claim that $\partial_B(F)=0$. Indeed, suppose that $L$ is a maximal left ideal of $B$ such that $\partial_B(F)$ is not included in $L$. Then, $B = \partial_B(F)L$, so
\[ F = F \cap \partial_B(F)L = (F \cap L)\partial_B(F).\] By Lemma \ref{lemmax}, $F\cap L$ is an ideal of $B$. Since $F/(F\cap L)$ is non-zero and~\hbox{$B$-cen}\-tral\-ly nilpotent, we have that $$\partial_B(F)(F\cap L)/(F\cap L)\leq \partial_{B/(F\cap L)}\big(F/(F\cap L)\big)<F/(F\cap L),$$ a contradiction. Thus, $F$ is an abelian brace.

Let $N$ be the product of all abelian minimal ideals of $B$. Then $N\leq F$. For the other inclusion, take $S$ a minimal subbrace subject to $B = SN$. Consider
\[ X = \bigcap \{L\,|\, \text{$L$ is a maximal left ideal of $S$}\},\]
a left ideal of $S$. If $S\cap N \not\leq X$, then there exists a maximal left ideal $L$ of $S$ such that $S \cap N$ is not included in $L$. Thus, $(S\cap N)L = S$ and then $B = SN = (S\cap N)LN = LN$, which contradicts the minimality of $S$. Therefore, $S \cap N \leq X$. 

Now, $S \cap N$ is an ideal of $B$, as $N$ is abelian and $B = SN$ (see~\cite[Lemma 27]{BallesterEstebanJimenezPerezC-solublebraces}). Assume that there exists a maximal left ideal $L$ of $B$ such that $S \cap N\not\leq L$. Thus, $B = (S\cap N)L$ and then, $S = S \cap (S\cap N)L = (S \cap L)(S\cap N)$. Take $L'$ a  left ideal of $S$ maximal subject to $S\cap L \leq L'$ and $S\cap N$ not included in $L'$. Then, $L'$ is indeed a maximal left ideal of $S$, because the existence of a left ideal $L''$ of $S$ such that $L'<L'' \leq S$ yields $S = (S\cap L)(S\cap N)\leq L''$. Therefore, $S\cap N \leq X\leq L'$, a contradiction. Thus, $S\cap N=0$.

Finally, $S\cap F$ is an ideal of $B$, as $F$ is abelian and $SF = B$ (see again~\cite[Lemma 27]{BallesterEstebanJimenezPerezC-solublebraces}), and consequently $S\cap F$ contains an abelian minimal ideal of~$B$, contradicting $S\cap N=0$.
\end{proof}

\medskip

In a centrally nilpotent brace, the Frattini ideal behaves pretty well. For example, it is possible to prove that it coincides with the set of non-generators.

\begin{definicio}
Let $B$ be a brace. We say that an element $b\in B$ is a {\it non-generator} of $B$ if for all $S\leq B$ such that  $B=\langle b,S\rangle$, we have $B=S$.
\end{definicio}

\begin{teorema}
Let $B$ be a centrally nilpotent finite brace. Then $\Frat(B)$ coincides with the set of all non-generators of $B$.
\end{teorema}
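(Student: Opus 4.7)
The plan is to reduce the statement to the classical group-theoretic argument by first identifying $\Frat(B)$ with the intersection of all maximal subbraces of $B$. Since $B$ is centrally nilpotent, $B$ is itself a $B$-centrally nilpotent ideal, so $\Fit(B)=B$; hence $\Frat(B)$ is simply the intersection of all maximal left ideals. I would then show that, in our setting, maximal left ideals and maximal subbraces coincide. By Theorem~\ref{locallnilp}(2), every maximal subbrace of $B$ is an ideal, hence in particular a maximal left ideal. Conversely, if $L$ is a maximal left ideal, then, since $B$ is finite, $L$ is contained in a maximal subbrace $M$; but $M$ is already an ideal by Theorem~\ref{locallnilp}(2), so $M$ is a proper left ideal containing $L$, and maximality among left ideals forces $L=M$. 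Thus $\Frat(B)=\bigcap\{M:\, M <\!\!\!\cdot\, B\}$.

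Next, I would prove the two inclusions. For the inclusion ‘‘$\Frat(B)\subseteq$ non-generators'', take $b\in\Frat(B)$ and a subbrace $S\leq B$ with $B=\langle b,S\rangle$. If $S\neq B$, then, as $B$ is finite, $S$ would be contained in some maximal subbrace $M$; but $b\in\Frat(B)\leq M$, whence $B=\langle b,S\rangle\leq M$, a contradiction. So $S=B$, proving that $b$ is a non-generator.

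For the reverse inclusion, let $b$ be a non-generator of $B$, and suppose by contradiction that $b\notin\Frat(B)$. Then there exists a maximal subbrace $M$ of $B$ with $b\notin M$. The subbrace $\langle b,M\rangle$ strictly contains $M$, so maximality of $M$ forces $\langle b,M\rangle=B$. But $b$ is a non-generator, so this forces $M=B$, contradicting that $M$ is proper. Hence $b\in\Frat(B)$.

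The only real point of care is the identification of maximal left ideals with maximal subbraces, which is where one needs the hypothesis of central nilpotency (via Theorem~\ref{locallnilp}(2)) together with finiteness; once this is in place the argument is a direct transcription of the classical proof for finite groups. Everything else is straightforward.
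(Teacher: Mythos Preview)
Your proof is correct and follows essentially the same approach as the paper: the paper simply asserts that in a centrally nilpotent (finite) brace the maximal left ideals, maximal ideals, and maximal subbraces all coincide, and then invokes the usual group-theoretic argument. You have spelled out both of these steps in detail (using $\Fit(B)=B$ and Theorem~\ref{locallnilp}(2)), which is exactly what the paper's one-line proof is gesturing at.
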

\begin{proof}
Since in a centrally nilpotent brace the maximal left ideals coincide with the maximal ideals and with the maximal subbraces, the usual group theoretical proof adapts to prove the following result.
\end{proof}

However, we note that there exist a brace $B$ of order $6$ in which $\Fit(B)=\Frat(B)$ is the only non-zero proper left ideal of $B$ and has order $3$. This shows that there is no possible analogue of two well-known group theoretical theorems concerning the Frattini subgroup of a group $G$: (1) $G/\Frat(G)$ nilpotent implies $G$ nilpotent; (2) if $p$ is a prime dividing $|G|$, then $p$ divides $|G/\Frat(G)|$ too.

\medskip

In the final part of this section we discuss further aspects of $B$-centrally nilpotence and hypercentral/locally nilpotent concepts for (sub)ideals.

\smallskip

The definition of upper $B$-central series (and lower $B$-central series) for an ideal $I$ of a brace $B$ can be extended by using transfinite numbers (just how we did in Section \ref{sec:nilp}), and this allows us to define $B$-hypercentral ideals of braces. However, for our purposes, the following equivalent definition is more convenient.

\begin{definicio}
Let $B$ be a brace. An ideal $I$ of $B$ is said to be {\it $B$-hy\-per\-cen\-tral} if there is an ascending chain of ideals of $B$ $$0=:I_0\leq I_1\leq\ldots I_\alpha\leq I_{\alpha+1}\leq\ldots I_\lambda=I$$ such that $I_{\alpha+1}/I_\alpha\leq\zeta(B/I_\alpha)$ for all ordinals $\alpha<\lambda$. The smallest $\lambda$ for which such a chain exists is the {\it length} of $I$. 
\end{definicio}

\medskip

Clearly, if $B$ is a brace, then $B$ is hypercentral if and only if $B$ is~\hbox{$B$-hyper}\-cen\-tral, and every $B$-centrally nilpotent ideal is $B$-hypercentral. The following result generalises Theorem \ref{teo:Fitting-brace}.

\begin{teorema}\label{produhypercentral}
Let $B$ be a brace.
\begin{itemize}
    \item[\textnormal{(1)}] If $C$ and $D$ are ideals of $B$ which are $B$-hypercentral of lengths $\alpha$ and~$\beta$, respectively, then $C+D$ is a~\hbox{$B$-hyper}\-central ideal of length at most $\beta\alpha+\operatorname{max}\{\alpha,\beta\}$.
    \item[\textnormal{(2)}] Suppose $C$ is subideal of defect $n$ and centrally nilpotent of class $c$, and~$D$ is a~\hbox{$B$-centrally} nil\-po\-tent ideal of class $d$. Then $C+D$ is centrally nilpotent of class at most $(c+n)d+c$.
    
    %$e(d+1)^{n-1}$, where $e=cd+\operatorname{max}(c,d)$.
    
    \item[\textnormal{(3)}] Suppose $C$ is ascendant of length $\mu$ and hypercentral of length $\gamma$, and $D$ is a $B$-hypercentral ideal of length $\delta$. Then $C+D$ is hypercentral of length at most $(\gamma+\mu)\delta+\gamma$.
\end{itemize}
\end{teorema}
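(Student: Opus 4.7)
The overarching strategy is to build a central ascending series for $K := C+D$ by concatenating a series for $D$ (viewed as an ideal of $K$) with a lifted series for $K/D$, exploiting the second-isomorphism-theorem identification $K/D\cong C/(C\cap D)$. Since $D$ is an ideal of $B$ and $K\leq B$, any $B$-central series of $D$ restricts to a $K$-central series, so the first half is immediate; the second half comes from the hypothesized central series for $C$, after verifying that the lift of a brace-central series of a quotient remains central in the ambient $K$.

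For part~(1), given $B$-hypercentral series $(C_\rho)_{\rho\leq\alpha}$ for $C$ and $(D_\sigma)_{\sigma\leq\beta}$ for $D$, the plan is to prepend the $D$-series to the translated chain $(D+C_\rho)_{\rho\leq\alpha}$. The key verification is that $(D+C_{\rho+1})/(D+C_\rho)$ is the image of $C_{\rho+1}/C_\rho$ under the surjection $B/C_\rho\twoheadrightarrow B/(D+C_\rho)$ and hence lies in $\zeta(B/(D+C_\rho))$. This produces a $B$-hypercentral series of length $\beta+\alpha$, which is at most $\beta\alpha+\max\{\alpha,\beta\}$ by elementary ordinal arithmetic when $\alpha,\beta\geq 1$, with the degenerate cases checked separately.

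Part~(2) proceeds by induction on the subideal defect $n$. In the base case $n=0$, $C$ is already an ideal of $B$; then $D$ is $K$-centrally nilpotent of class $\leq d$ (its $B$-central series restricts to a $K$-central one), while $K/D\cong C/(C\cap D)$ is a quotient of $C$ and hence centrally nilpotent of class $\leq c$. A central series of $K/D$ lifts to a $K$-central series from $D$ to $K$ (preimages of ideals are ideals, and centres pull back through the surjection $K\twoheadrightarrow K/D$), and concatenation gives class at most $c+d\leq cd+c$. For the inductive step, set $B_1:=C^B$; then $C$ has subideal defect $n-1$ in $B_1$, and $D_1:=D\cap B_1$ is $B_1$-centrally nilpotent of class $\leq d$ (restriction of the $B$-central series of $D$). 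By induction, $K_1:=C+D_1$ is centrally nilpotent of class at most $(c+n-1)d+c$. The modular law yields $(C+D)/K_1\cong D/D_1$, which is centrally nilpotent of class $\leq d$, and lifting its central series adjoins $d$ further $K$-central steps, reaching the stated bound $(c+n)d+c$.

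Part~(3) is formally parallel to part~(2), with transfinite induction on the ascendant length $\mu$. Successor ordinals $\mu=\mu'+1$ repeat the inductive step with $B_1=C_{\mu'}$. Limit ordinals require taking nested unions of the series built for each $\mu'<\mu$ and checking that the limit remains an ascending series of ideals of $K$ with central factors. The main obstacle throughout parts~(2) and~(3) is ensuring that each step of the constructed chain is actually an ideal of $K=C+D$ (not merely of some inner $B_1$) and that lifted central factors remain central in quotients of $K$; this is addressed by systematically using the isomorphism theorems and the fact that lifting a central series through a surjection preserves centrality. A secondary technicality is the ordinal arithmetic in parts~(1) and~(3), since $(\gamma+\mu)\delta+\gamma$ does not behave distributively and limit stages must be estimated by hand.
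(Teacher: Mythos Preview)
Your concatenation strategy has a genuine gap that runs through all three parts. The claim ``any $B$-central series of $D$ restricts to a $K$-central series'' is the problem. A $B$-hypercentral series for the ideal $D$ consists of ideals of $B$ whose successive factors satisfy $D_{\sigma+1}/D_\sigma\leq\zeta(D/D_\sigma)$ --- centrality in $D$, not in $B$ (the written definition has a slip; compare it with Definition~\ref{definitioncentralnilp} and with the paper's own proof, which works with $\zeta(E)$). So when you view this series inside $K=C+D$, the factors are still only central in $D/D_\sigma$: an element $x\in D_{\sigma+1}$ commutes with all of $D$ modulo $D_\sigma$, but nothing forces it to commute with $C$ modulo $D_\sigma$. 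Thus the lower half of your series is \emph{not} a $c$-series of $K$, and splicing on a lifted central series of $K/D$ does not make $K$ centrally nilpotent. (If ``$B$-hypercentral'' really meant factors central in $B$, then part~(1) would be trivial with bound $\max\{\alpha,\beta\}$; the stated bound $\beta\alpha+\max\{\alpha,\beta\}$ already signals the intended reading.) The same defect reappears in your base case for~(2): you get that $D$ is $K$-centrally nilpotent, but that only yields a chain of $K$-ideals with factors central in $D$, which cannot be concatenated with a central series of $K/D$ to give a $c$-series of $K$. And in your inductive step the series you obtain for $K_1=C+D_1$ has factors central in $K_1$, not in $K$, so again the splice fails --- the isomorphism theorems do not repair this.

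The paper's argument is structurally different: it does not concatenate but \emph{interleaves}. For~(3) (which subsumes~(2)), one works through the $B$-hypercentral series $0=D_0<D_1<\cdots<D_\delta=D$ one layer at a time. Since $D_1\leq\zeta(D)$, every element of $D_1$ already commutes with all of $D$; the only obstruction to $D_1\leq\zeta(E)$ is its interaction with $C$. This is handled in two stages: first, intersecting with the upper central series of $C$ gives $C\cap D_1\leq\zeta_\gamma(E)$ (centrality in $C$ plus centrality in $D$ yields centrality in $E$, and the terms $\zeta_\rho(C)\cap D_1$ are ideals of $E$ because $D$ centralises $D_1$ pointwise); second, after factoring out $C\cap D_1$, the ascendant chain $C=C_0\trianglelefteq C_1\trianglelefteq\cdots\trianglelefteq C_\mu=CD_1$ is used to show each $(C_{\beta+1}\cap D_1)/(C_\beta\cap D_1)\leq\zeta\bigl(E/(C_\beta\cap D_1)\bigr)$, absorbing all of $D_1$ into $\zeta_{\gamma+\mu}(E)$. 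Iterating over the $\delta$ layers of $D$ and adding a final $\gamma$ for $E/D$ gives $(\gamma+\mu)\delta+\gamma$. Part~(1) follows the same philosophy in miniature: one shows $\zeta(E)$ contains a non-zero ideal of $B$ by intersecting the first term of the $B$-hypercentral series of $C$ with that of $D$ (or using $\zeta(C)$ directly when $C\cap D=0$), then recurses.
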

\begin{proof}
(1)\quad Let $E=C+D$. Then $E$ is an ideal of $B$ and to show that $E$ is~\hbox{$B$-hyper}\-central, it suffices to prove that $\zeta(E)$ contains a non-zero ideal $I$ of $B$. To this aim we may certainly assume that $C$ and $D$ are non-zero.

Suppose first $C\cap D=0$. By hypothesis $\zeta(C)$ contains a non-zero ideal~$I$ of $B$. On the other hand, $\zeta(C)\leq\zeta(E)$ and so we are done.

Assume $C\cap D\neq0$. Then $\zeta(C)\cap D$ contains a non-zero ideal $I$ of~$B$, and $I\cap \zeta(D)$ contains a non-zero ideal $J$ of $B$. Thus $$J\leq \zeta(C)\cap\zeta(D)\leq\zeta(E)$$ and we are done. The bound on the hypercentral length can be easily deduced from the proof.

\medskip

\noindent(3)\quad Since $D$ is $B$-hypercentral of length $\delta$, there is an ascending chain of ideals of $B$ $$0=D_0<D_1<\ldots D_\alpha<D_{\alpha+1}<\ldots D_\delta=D$$ such that $D_{\beta+1}/D_\beta\leq\zeta(D/D_\beta)$ for all ordinals $\beta<\delta$.

Let $E=C+D$. Since $C$ is hypercentral of length $\gamma$, it follows that $C\cap D_1\leq\zeta_\gamma(E)$. Thus, we may factor out $C\cap D_1$ and assume $C\cap D_1=0$. Let  $F=\langle C,D_1\rangle=CD_1$. Now, since $C$ is ascendant of length $\mu$, there is an ascending chain $$C=C_{0}\trianglelefteq C_{1}\trianglelefteq\ldots C_{\alpha}\trianglelefteq C_{\alpha+1}\trianglelefteq\ldots C_{\mu}=F$$ connecting $C$ to $F$. It is easy to see that $$(C_{\beta+1}\cap D_{1})/(C_{\beta}\cap D_{1})\leq\zeta\big(E/(C_{\beta}\cap D_{1})\big)$$ for all $\beta<\mu$. Therefore $D_1\leq\zeta_\mu(E)$.

We factor $D_1$ out and we repeat the above argument. This shows that $D\leq\zeta_\rho(E)$, where $\rho=(\gamma+\mu)\delta$, so $E$ is hypercentral of length at most $\rho+\gamma$.

\medskip

\noindent(2)\quad The proof is essentially the same as that of (3), but easier.
\end{proof}

\medskip

\medskip

On some occasions, the product of all $B$-hypercentral ideals of a brace is still $B$-hypercentral. This is clearly the case for instance if $B$ satisfies the maximal condition on ideals, but it is also the case if $B$ satisfies the minimal condition on ideals, as the following result shows.

\begin{teorema}
Let $B$ be a brace admitting an ascending chief series $\mathcal{S}$. Then the maximal ideal centralising all factors of $\mathcal{S}$ is precisely the unique maximal $B$-hypercentral ideal of~$B$.
\end{teorema}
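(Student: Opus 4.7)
The plan is to establish two inclusions: that $C:=\bigcap_{\alpha<\mu}\C_B(I_{\alpha+1}/I_\alpha)$ is itself $B$-hypercentral, and that every $B$-hypercentral ideal $H$ of $B$ is contained in $C$. Combined with Theorem~\ref{produhypercentral}(1), which ensures that sums of $B$-hypercentral ideals remain $B$-hypercentral, these two inclusions pin $C$ down as the unique maximal $B$-hypercentral ideal of $B$.

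For the first inclusion, I would take the ascending chain of $B$-ideals $K_\alpha:=C\cap I_\alpha$, which runs from $K_0=0$ to $K_\mu=C$ and is automatically closed under unions at limit ordinals because $\mathcal{S}$ is. For the successor step, the inclusions $K_{\alpha+1}\leq I_{\alpha+1}$ and $C\leq\C_B(I_{\alpha+1}/I_\alpha)$ give $[K_{\alpha+1},C]^B\leq [I_{\alpha+1},C]^B\leq I_\alpha$, and combining with $[K_{\alpha+1},C]^B\leq K_{\alpha+1}\leq C$ produces $[K_{\alpha+1},C]^B\leq C\cap I_\alpha=K_\alpha$. This is precisely the central-factor condition $K_{\alpha+1}/K_\alpha\leq \zeta(C/K_\alpha)$ needed to exhibit a $B$-hypercentral chain for $C$.

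For the second inclusion, I would fix $\alpha<\mu$ and pass to the quotient $B/I_\alpha$, setting $\bar H=(H+I_\alpha)/I_\alpha$ and $\bar M=I_{\alpha+1}/I_\alpha$. Then $\bar H$ inherits $B/I_\alpha$-hypercentrality, with an associated central chain $\{\bar H_\beta\}$ of $B/I_\alpha$-ideals, and $\bar M$ is a minimal ideal of $B/I_\alpha$. Since $\bar H\cap\bar M$ is an ideal of $B/I_\alpha$ inside the minimal $\bar M$, either $\bar H\cap\bar M=0$ (whence $[\bar H,\bar M]^{B/I_\alpha}\leq \bar H\cap\bar M=0$ at once) or $\bar M\leq \bar H$. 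In the latter situation I would mimic the argument of Lemma~\ref{dontworry}: pick the least $\beta$ with $\bar M\cap \bar H_\beta\neq 0$; this $\beta$ is necessarily a successor, the $B/I_\alpha$-ideal $\bar M\cap \bar H_\beta$ equals $\bar M$ by minimality, and then $[\bar M,\bar H]^{\bar H}\leq \bar M\cap \bar H_{\beta-1}=0$, so every element of $\bar M$ lies in $\zeta(\bar H)$.

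The delicate final step is to upgrade this centrality in $\bar H$ to centrality in the larger $B/I_\alpha$: by Theorem~\ref{teo:commutador=ast}, the commutator $[\bar H,\bar M]^{B/I_\alpha}$ is the $B/I_\alpha$-ideal generated by precisely the same $\ast$-products and additive/multiplicative commutators between elements of $\bar H$ and $\bar M$ that already vanish once we know $[\bar M,\bar H]^{\bar H}=0$. Hence $[\bar H,\bar M]^{B/I_\alpha}=0$, which lifts to $[H,I_{\alpha+1}]^B\leq I_\alpha$, so $H\leq \C_B(I_{\alpha+1}/I_\alpha)$; since $\alpha$ was arbitrary, $H\leq C$.
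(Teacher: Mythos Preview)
Your proof is correct and follows essentially the same approach as the paper, which simply refers back to Theorem~\ref{teo:Fit_centralisers}: intersect $C$ with the terms of the chief series to exhibit a $B$-hypercentral chain for $C$, and use minimality of each chief factor to show any $B$-hypercentral ideal centralises it. Your ``delicate final step'' through $[\bar M,\bar H]^{\bar H}$ is an unnecessary detour---since the $\bar H_\beta$ are already ideals of $B/I_\alpha$, the condition $\bar H_\beta/\bar H_{\beta-1}\le\zeta(\bar H/\bar H_{\beta-1})$ gives $[\bar M,\bar H]^{B/I_\alpha}\le\bar H_{\beta-1}\cap\bar M=0$ directly---and the appeal to Theorem~\ref{produhypercentral}(1) is superfluous, as your two inclusions already force $C$ to be the unique maximal $B$-hypercentral ideal; but neither point affects correctness.
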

\begin{proof}
The proof runs along the same lines as that of Theorem \ref{teo:Fit_centralisers}.
\end{proof}

The following result shows that in the universe of locally centrally-nilpotent braces, the class of $B$-hypercentral braces is closed with respect to forming extensions by finitely generated hypercentral braces.

\begin{teorema}
Let $N$ be an ideal of the locally centrally-nilpotent brace~$B$. If~$N$ is $B$-hypercentral and~$B/N$ is finitely generated, then $B$ is~hypercentral.
\end{teorema}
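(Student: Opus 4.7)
The plan is to reduce the theorem to a concatenation of two ascending central series: the one witnessing the $B$-hypercentrality of $N$, and a finite one from $N$ to $B$ coming from central nilpotency of $B/N$.

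First I would observe that, since $B$ is locally centrally-nilpotent, the quotient $B/N$ is locally centrally-nilpotent as well (this closure property was recorded earlier in the paper). Being finitely generated, $B/N$ is then itself a finitely generated subbrace of a locally centrally-nilpotent brace, and so it is actually centrally nilpotent; let $c$ be its class. Thus there exists a finite chain of ideals
\[N=M_0\leq M_1\leq\ldots\leq M_c=B,\]
with $M_{i+1}/M_i\leq\zeta(B/M_i)$ for $0\leq i<c$ (lifting the upper central series of $B/N$ and using the third isomorphism theorem).

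Next, by the very definition of $B$-hypercentrality, there is an ascending chain of ideals of $B$
\[0=N_0\leq N_1\leq\ldots\leq N_\alpha\leq N_{\alpha+1}\leq\ldots\leq N_\lambda=N,\]
with $N_{\alpha+1}/N_\alpha\leq\zeta(B/N_\alpha)$ for every ordinal $\alpha<\lambda$. Concatenating this with the finite chain from the previous step gives an ascending $c$-series of $B$ from $0$ to $B$, which is exactly the condition defining hypercentrality. Equivalently, a transfinite induction shows that $N_\alpha\leq\zeta_\alpha(B)$ for every $\alpha\leq\lambda$, hence $N\leq\zeta_\lambda(B)$; combined with $\Gamma_{c+1}(B/N)=0$, one gets $\zeta_{\lambda+c}(B)=B$.

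There is no real obstacle here: the only non-trivial input is that $B/N$ is centrally nilpotent, which is where the local central-nilpotency of $B$ and the finite generation of $B/N$ are used in tandem. Everything else is just bookkeeping of the central series.
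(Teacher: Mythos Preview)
Your concatenation relies on reading $B$-hypercentrality of $N$ as providing an ascending chain of ideals of $B$ with $N_{\alpha+1}/N_\alpha\leq\zeta(B/N_\alpha)$. Although the displayed definition literally says this, it is a misprint: compare with the finite case in Definition~\ref{definitioncentralnilp}, where the factors of a $B$-centrally nilpotent ideal $I$ are required to lie in $\zeta(I/J_{i-1})$, with the sentence just before the definition (which introduces $B$-hypercentrality as the transfinite extension of the upper $B$-central series, itself built from $\zeta(I/\cdot)$), and with the paper's own use of the notion in the proof of Theorem~\ref{produhypercentral}\,(3) and in the proof of the present theorem, where one only has $D_{\beta+1}/D_\beta\leq\zeta(D/D_\beta)$ and $A/K\leq\zeta(N/K)$ respectively. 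Under the intended definition the $N_\alpha$ are ideals of $B$, but the successive factors are only central in $N$, not in $B$; splicing this chain below a finite $c$-series of $B/N$ therefore does \emph{not} yield a $c$-series of $B$, and your argument collapses at precisely that point. A warning sign is that with your reading the result would need nothing about $B$ beyond central nilpotency of $B/N$.

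The missing idea is how to upgrade ``central in $N$'' to ``central in $B$'', and this is where local central-nilpotency of $B$ itself (not merely of $B/N$) does real work. The paper argues by contradiction: if the hypercentre $Z=\overline{\zeta}(B)$ is proper, then $K=Z\cap N<N$, and $B$-hypercentrality gives a non-zero ideal $A/K$ of $B/K$ with $A/K\leq\zeta(N/K)$. Picking $a\in A\setminus K$ and a finite set $S$ with $\langle S,N\rangle=B$, the finitely generated subbrace $U=\langle a,S,K\rangle$ has $U/K$ centrally nilpotent, so $V/K:=\zeta(U/K)\cap\big((A\cap U)/K\big)$ is non-trivial. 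Now $V$ centralises $N$ modulo $K$ (since $V\leq A$) and centralises $T=\langle S\rangle$ modulo $K$ (since $V/K\leq\zeta(U/K)$); because $B=NT=N+T$, a short computation with the brace identities shows $V/K\leq\zeta(B/K)$, contradicting the choice of $K$.
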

\begin{proof}
Let $S$ be a finite subset of $B$ that generates $B$ modulo $N$, and let $Z=\overline{\zeta}(B)$ be the hypercentre of $B$. Assume by contradiction $Z\neq B$. Now,~$B/N$ is centrally nilpotent, so $N\not\leq Z$ and hence the ideal $K:=Z\cap N$ of $G$ is strictly contained in $N$. Since $N/K$ is $B$-hypercentral, there is a non-zero ideal $A/K$ of $B/K$ such that $$A/K\leq\zeta(N/K).$$ Let $a\in A\setminus K$ and $U=\langle a,S,K\rangle$; in particular, $U/K$ is centrally nilpotent. Since \hbox{$(A\cap U)/K$} is a non-zero ideal of $U/K$, we have that $$V/K:=\zeta(U/K)\cap\big((A\cap U)/K\big)\neq0.$$ Now, the fact that $V\leq A$ implies that $[V,N]_+$, $[V,N]_\cdot$ and $V\ast N$ are all contained in~$K$. Similarly, the fact that $V/K\leq\zeta(U/K)$ shows that $[V,T]_+$, $[V,T]_\cdot$ and $V\ast T$ are all contained in $K$, where $T=\langle S\rangle$. Since $B=N+T=NT$, we easily see that $[V,B]_+$ and $[V,B]_\cdot$ are contained in $K$. Moreover, if $u\in N$, $v\in T$, and $a\in V$, then 
$$a\ast (u+v)=a\ast u+u+a\ast v-u\in K.$$ This shows that $V\ast B\leq K$ and proves that $V/K\leq\zeta(B/K)$. Since $K\leq Z$, it follows that $V\leq Z$, so $V\leq Z\cap N$, a contradiction.
\end{proof}

\medskip

Also $B$-central nilpotency (resp. hypercentrality) can be locally detected, and our next result is in fact a generalisation of Theorem \ref{countablenilp}.

\begin{teorema}
Let $B$ be a brace and let $I\trianglelefteq B$. Then:
\begin{itemize}
\item[\textnormal{(1)}] $I$ is $B$-centrally nilpotent of class at most $c$ if and only if $I\cap F$ is $F$-centrally nilpotent of class at most $c$ for every finitely generated subbrace~$F$ of $B$.
\item[\textnormal{(2)}] $I$ is $B$-centrally nilpotent if and only if $I\cap C$ is $C$-centrally nilpotent for every countable subbrace $C$ of $B$.
\item[\textnormal{(3)}] $I$ is $B$-hypercentral if and only if $I\cap C$ is $C$-hypercentral for every countable subbrace $C$ of $B$.
\end{itemize}
\end{teorema}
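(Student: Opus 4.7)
The approach is a direct adaptation of the proof of Theorem \ref{countablenilp} to the ideal setting, with the role of $B$ taken by the pair $(I,B)$ and the role of a subbrace played by the pair $(I\cap C,C)$. Throughout, let $\circ$ denote any of the basic operations $[\cdot,\cdot]_+$, $[\cdot,\cdot]_\cdot$, or a star product. The forward directions of (1), (2) and (3) all rest on one observation: if $\{J_\alpha\}$ is an ascending chain of ideals of $B$ contained in $I$ and witnessing $B$-central nilpotency (resp.\ $B$-hypercentrality) of $I$, then for any subbrace $C\leq B$ the chain $\{J_\alpha\cap C\}$ consists of ideals of $C$ inside $I\cap C$, and centrality is preserved because for $x\in J_{\alpha+1}\cap C$ and $y\in I\cap C$ one has $x\circ y\in J_\alpha\cap C$, so
\[(J_{\alpha+1}\cap C)/(J_\alpha\cap C)\leq\zeta\big((I\cap C)/(J_\alpha\cap C)\big).\]
In case (1) the chain has length $c$, so $I\cap F$ inherits class at most $c$.

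For the reverse of (1), I would argue as in the proof of Theorem \ref{countablenilp}(1): any $g\in\Gamma_{c+1}(I)^B$ is built from finitely many elements $i_1,\ldots,i_s\in I$ and $b_1,\ldots,b_t\in B$ by iterated commutators together with sums, products, inverses, $\lambda$-action and $\ast$-multiplication; read inside the finitely generated subbrace $F=\langle i_1,\ldots,i_s,b_1,\ldots,b_t\rangle$, the same expression places $g$ in $\Gamma_{c+1}(I\cap F)^F=0$. The reverse of (2) then follows exactly as in Theorem \ref{countablenilp}(2): if $I$ were not $B$-centrally nilpotent, (1) would supply, for each $c\in\mathbb{N}$, a finitely generated $F_c\leq B$ with $I\cap F_c$ not $F_c$-centrally nilpotent of class at most $c$; the countable subbrace $C=\langle F_c\,:\,c\in\mathbb{N}\rangle$ would then violate the hypothesis, since if $I\cap C$ were $C$-centrally nilpotent of some class $c_0$, the forward direction of (1) applied inside $C$ with $F=F_{c_0+1}$ would contradict the choice of $F_{c_0+1}$.

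The hard part is the reverse of (3). The subtlety is that the $B$-upper central series of $I$ is built from the \emph{maximal ideal of $B$} inside $\zeta(I/\cdot)$, not the centre itself, so an element-by-element descent argument is not immediate. My plan is first to reduce to the case $\zeta_1(I)^B=0$ by factoring out the $B$-hypercentre $H:=\overline{\zeta}(I)^B$, writing $\bar B=B/H$ and $\bar I=I/H$. This requires a preliminary verification that the countable-subbrace hypothesis transfers to $(\bar I,\bar B)$: given a countable $\bar C\leq \bar B$, lift a countable generating set to obtain a countable $C_0\leq B$ with $C_0H/H=\bar C$; apply the hypothesis to $I\cap C_0$; and push the resulting $C_0$-central chain of ideals of $I\cap C_0$ forward through the quotient map to obtain a $\bar C$-central chain for $\big((I\cap C_0)+H\big)/H=\bar I\cap\bar C$.

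Renaming $\bar B$ and $\bar I$ back to $B$ and $I$, the reduced situation has $\zeta_1(I)^B=0$, so no non-zero ideal of $B$ lies inside $\zeta(I)$; in particular, for each $0\neq x\in I$ the ideal $x^B$ is not contained in $\zeta(I)$, whence there exist $x'\in x^B$ and $y\in I$ with $x'\circ y\neq 0$ for some operation $\circ$. Starting from some non-zero $x_0\in I$ I would recursively produce non-zero elements $x_n\in I$, $x_n'\in x_n^B$, $y_n\in I$ and operations $\circ_n$ with $x_{n+1}=x_n'\circ_n y_n$. Collecting $x_0$, all the $y_n$'s, and all $B$-elements appearing in the expressions of $x_n'$ from $x_n$ into a countable subbrace $C\leq B$, the hypothesis forces $I\cap C$ to be $C$-hypercentral, so each $x_n$ has a minimal ordinal depth $\delta_n$ in the $C$-upper central series of $I\cap C$. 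Since $\zeta_{\delta_n}(I\cap C)^C$ is an ideal of $C$ containing $x_n^C$, which in turn contains $x_n'$ by construction, the centrality at level $\delta_n$ yields $x_{n+1}=x_n'\circ_n y_n\in\zeta_{\delta_n-1}(I\cap C)^C$, so $\delta_{n+1}<\delta_n$. Minimality rules out $\delta_n$ being a limit, and $x_n\neq 0$ rules out $\delta_n=0$; thus $(\delta_n)_{n\in\mathbb{N}}$ is an infinite strictly descending chain of ordinals, giving the desired contradiction.
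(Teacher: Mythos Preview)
Your proposal is correct and follows essentially the same route as the paper's proof, which merely sketches (1) via the elementwise characterisation of $\zeta_c(I)^B$ and then refers back to Theorem~\ref{countablenilp} for (2) and (3). Your treatment of (3) is in fact more careful than the paper's: you correctly isolate the subtlety that $\zeta_1(I)^B=0$ only forbids non-zero \emph{ideals of $B$} inside $\zeta(I)$, which forces the descent to pass through $x_n'\in x_n^B$ (with the finitely many $B$-elements witnessing this membership thrown into $C$, so that $x_n'\in x_n^C$) before applying $\circ_n y_n$; the paper absorbs this into the word ``similarly''.
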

\begin{proof}
We only deal with the proof of (1), since (2) and (3) then follow in a similar fashion using ideas from Theorem \ref{countablenilp}.

For each $u,v\in B$, we write $u\circ v$ to denote one (but we do not know which one) of the following operations $[u,v]_\cdot,[u,v]_+,u\ast v$. Then (1) is a direct consequence of the fact that $\zeta_c(I)^B$ can be easily characterised as the set of all elements $b\in I$ such that $$\langle (\ldots((b\circ b_1)\circ\ldots)\circ b_{c-i})\rangle^B\leq\zeta_i(I)$$ for all $i=0,1,\ldots,c-1$ and for all $b_1,\ldots,b_{c-i}\in B$. 
\end{proof}

\bigskip

To provide a good definition of ‘‘locally $B$-nilpotent ideal’’ is not an obvious task. We now concern ourselves with a couple of possible definitions, mostly sketching proofs and results. The first idea that comes in mind is that of using central chains of ideals, just as we did for $B$-hypercentral and $B$-centrally nilpotent ideals. In fact, it follows from Theorem \ref{locallnilp} (and Zorn's lemma) that in every chief series of a locally centrally-nilpotent brace $B$, two consecutive ideals $K\leq H$ satisfy $H/K\leq\zeta(B/K)$.

\begin{definicio}
{\rm Let $B$ be a brace. An ideal $I$ of $B$ is said to be {\it $\zeta_B$-nilpotent} if every quotient $I/J$ of $I$ by an ideal $J$ of $B$ admits a maximal chain $\mathcal{S}$ of ideals of $B$ in which $H/K\leq\zeta(I/K)$ for all consecutive terms $K/J\leq H/J$ of the chain $\mathcal{S}$.}
\end{definicio}

\medskip

Using ideas from the proof of Theorem \ref{produhypercentral}, it is not difficult to see that the product of arbitrarily many $\zeta_B$-nilpotent ideals is $\zeta_B$-nilpotent. Thus, any brace $B$ has a unique maximal $\zeta_B$-nilpotent ideal, we call it the {\it $\zeta_B$-radical} of~$B$: it turns out that the largest ideal of $B$ centralising all quotients of a chief series of $B$ is precisely the $\zeta_B$-radical of $B$. The following result shows that an analogue of Theorem \ref{prop:Fiting_autocent_soluble} is possible for the $\zeta_B$-radical.

\begin{teorema}
Let $B$ be a brace admitting an ascending chain of  ideals $$0=B_0\leq B_1\leq\ldots\ldots B_\alpha\leq B_{\alpha+1}\leq\ldots B_\lambda=B$$ such that $B_{\beta+1}/B_\beta$ is a $\zeta_B$-nilpotent ideal of $B/B_\beta$ for all $\beta<\lambda$. Then $C_B(H)\leq H$, where $H$ is the $\zeta_B$-radical of $B$. 
\end{teorema}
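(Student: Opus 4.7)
The strategy parallels the proof of Theorem~\ref{prop:Fiting_autocent_soluble}. I argue by contradiction, assuming $C := \C_B(H) \not\leq H$, and construct a $\zeta_B$-nilpotent ideal of $B$ that strictly contains $H$, contradicting the maximality of $H$ as the $\zeta_B$-radical.

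I first locate a $\zeta_{B/H}$-nilpotent sub-ideal of $(C+H)/H$ via the ascending chain. Let $\alpha$ be the least ordinal for which $(C+H) \cap (B_\alpha + H) \not\leq H$; this property fails at $\alpha = 0$ and is preserved at limit ordinals (intersections distribute over the corresponding ascending unions), so $\alpha = \gamma + 1$ is a successor. Setting
\[ M := \big((C+H) \cap (B_{\gamma+1}+H)\big) + H, \]
modularity yields $H < M \leq C + H$ together with $M \cap (B_\gamma + H) = H$. Hence, inside $B/H$, the ideal $M/H$ has trivial intersection with $(B_\gamma + H)/H$ and projects isomorphically onto $(M + B_\gamma + H)/(B_\gamma + H)$, and this isomorphism preserves both ideal structure and central factors (a direct modularity calculation). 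The latter ideal sits inside $(B_{\gamma+1}+H)/(B_\gamma+H)$, which is a quotient-image of the $\zeta_{B/B_\gamma}$-nilpotent ideal $B_{\gamma+1}/B_\gamma$. Since $\zeta_B$-nilpotency is inherited by sub-ideals and by quotients (both immediate from the definition), $M/H$ is $\zeta_{B/H}$-nilpotent.

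Next, I show that $M$ itself is $\zeta_B$-nilpotent. Modularity gives $M = H + (M \cap C)$, and crucially $[M \cap C, H]^B \leq [C, H]^B = 0$. Given any ideal $K$ of $B$ with $K \leq M$, I will construct a chain of ideals of $B$ from $K$ to $M$ with central-in-$M$ factors by concatenating two pieces. For the first piece, from $K$ to $K+H$, take a chain $K \cap H = S_0 \leq \ldots \leq S_n = H$ realising the $\zeta_B$-nilpotency of $H/(K \cap H)$, and translate it to $K + S_0 \leq \ldots \leq K + S_n$. For the second piece, from $K+H$ to $M$, take a chain in $B/H$ realising the $\zeta_{B/H}$-nilpotency of $(M/H)/((K+H)/H)$ and pull it back. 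The centrality of the factors of the first piece in $M$ is the key delicate point; using Lemma~\ref{lema:propietats_commut},
\[ [S_i, M]^B = [S_i, H + (M \cap C)]^B = [S_i, H]^B + [S_i, M \cap C]^B \leq S_{i-1} + [H, C]^B = S_{i-1}, \]
from which $[K + S_i, M]^B \leq K + S_{i-1}$ follows. For the second piece, centrality of factors in $M$ modulo each term is equivalent to centrality in $M/H$ modulo its image, because every term contains $H$. Refining the resulting chain to a maximal one (a process that preserves centrality) shows that $M$ is $\zeta_B$-nilpotent, and since $M > H$ we obtain the desired contradiction.

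The main obstacle is the central-factor verification for the first chain: one a priori has centrality only with respect to $H$, but needs it with respect to the larger ambient ideal $M$. The hypothesis $[C, H]^B = 0$ is precisely what makes this possible, since the part $M \cap C$ of $M$ beyond $H$ contributes nothing to $[S_i, M]^B$.
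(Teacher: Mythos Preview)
Your proof is correct and follows essentially the same strategy as the paper's: both arguments produce a $\zeta_B$-nilpotent ideal contradicting the maximality of $H$, using the key relation $[C,H]^B=0$ to glue together a central chain coming from $H$ with one coming from the $\zeta_{B/H}$-nilpotent piece above $H$. The only cosmetic difference is that the paper works with $I\cap C$ (showing it is $\zeta_B$-nilpotent, hence contained in $H$, forcing $I=H$) while you work with $M=(M\cap C)+H$ directly; your version is considerably more explicit, in particular spelling out how the ascending chain produces the $\zeta_{B/H}$-nilpotent ideal and verifying the definition of $\zeta_B$-nilpotency for $M$ in full.
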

\begin{proof}
Suppose $C=C_B(H)\not\leq H$. Then $(C+H)/H$ contains a non-zero $\zeta_B$-nilpotent ideal $I/H$ of $B/H$. Since $I\cap C\cap H\leq\zeta(I\cap C)$, it follows that $I\cap C$ is a $\zeta_B$-nilpotent ideal of $B$. Thus, $I\cap C\leq H$ and $$I=I\cap (C+H)=(I+C)\cap H=H,$$ a contradiction.
\end{proof}

However, there is one reason for which this is not a really convincing good definition of ‘‘locally $B$-nilpotent ideal’’: a $\zeta_B$-ideal could not be locally centrally-nilpotent (there are examples even among groups) --- although  if~$B$ is locally finite, then a $\zeta_B$-ideal is locally centrally-nilpotent.

A more fruitful approach could deal with finitely generated subbraces and the way the ideal embeds into them. There are several ways in which this case be achieved, but the most reasonable one seems to be the following.

\begin{definicio}
{\rm Let $B$ be a brace. An ideal $I$ of $B$ is {\it locally $B$-nilpotent} if the following property holds:
\begin{itemize}
    \item For every finitely generated subbrace $F$ of $B$, the finitely generated subbraces of $I\cap F$ are contained in $F$-centrally nilpotent ideals of~$F$.
\end{itemize}}
\end{definicio}

Trivially, every locally $B$-nilpotent ideal is locally centrally-nilpotent, so this solves the previous issue for $\zeta_B$-nilpotency.

\begin{teorema}\label{hirsch}
Let $B$ be a brace. The sum of arbitrarily many locally~\hbox{$B$-nil}\-potent ideals of $B$ is locally $B$-nilpotent.
\end{teorema}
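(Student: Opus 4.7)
The plan is to reduce to the case of two summands and then deploy the Fitting-type theorem for braces (Theorem~\ref{teo:Fitting-brace}) after a suitable enlargement of the given finitely generated subbrace. The reduction is routine: if $\{I_\alpha\}_{\alpha}$ is a family of locally $B$-nilpotent ideals with $I=\sum_\alpha I_\alpha$, and $G=\langle g_1,\ldots,g_n\rangle$ is a finitely generated subbrace of $I\cap F$ for some finitely generated subbrace $F$ of $B$, then every $g_i$ lies in a finite sum of the $I_\alpha$, and so $G$ is contained in $I_{\beta_1}+\cdots+I_{\beta_N}$ for finitely many indices. A straightforward induction then reduces the problem to showing that the sum $I+J$ of two locally $B$-nilpotent ideals is again locally $B$-nilpotent.

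So, fix a finitely generated subbrace $F\leq B$ and a finitely generated subbrace $G=\langle g_1,\ldots,g_n\rangle$ of $(I+J)\cap F$. Since both $(I,+)$ and $(J,+)$ are normal subgroups of $(B,+)$, each element of $I+J$ can be written as $i+j$ with $i\in I$ and $j\in J$; thus $g_i=a_i+b_i$ for suitable $a_i\in I$ and $b_i\in J$. We enlarge $F$ to the finitely generated subbrace $F':=\langle F,a_1,\ldots,a_n\rangle$ of $B$; note that $b_i=-a_i+g_i\in F'$ for each $i$. Local $B$-nilpotency of $I$ applied to $F'$ yields an $F'$-centrally nilpotent ideal $M$ of $F'$ containing $\langle a_1,\ldots,a_n\rangle\subseteq I\cap F'$, and similarly local $B$-nilpotency of $J$ yields an $F'$-centrally nilpotent ideal $N$ of $F'$ containing $\langle b_1,\ldots,b_n\rangle\subseteq J\cap F'$. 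By Theorem~\ref{teo:Fitting-brace} applied inside $F'$, the sum $K:=M+N$ is an $F'$-centrally nilpotent ideal of $F'$, and clearly $G\subseteq K\cap F$ since each $g_i=a_i+b_i\in M+N$.

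The main obstacle now is to argue that $K\cap F$ is not only an ideal of $F$ but is actually $F$-centrally nilpotent. For the first assertion, one checks directly that intersecting any ideal of $F'$ with the subbrace $F\leq F'$ yields an ideal of $F$, using that the star product and both group operations are internal to $F$. For $F$-central nilpotency, the key observation is that the generating set of the commutator $[A,B]^F$ coincides with that of $[A,B]^{F'}$ whenever $A,B\leq F$, so $[A,B]^{F'}\cap F$ is an ideal of $F$ containing these generators and therefore contains $[A,B]^F$. A routine induction then yields $\Gamma_{k+1}(K\cap F)^F\subseteq \Gamma_{k+1}(K)^{F'}\cap F$ for every $k\in\mathbb{N}$, so from $\Gamma_{k+1}(K)^{F'}=0$ (for some $k$) we deduce $\Gamma_{k+1}(K\cap F)^F=0$. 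Hence $K\cap F$ is an $F$-centrally nilpotent ideal of $F$ containing $G$, which is exactly what was required.
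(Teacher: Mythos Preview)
Your proof is correct and follows essentially the same route as the paper: reduce to two summands, enlarge the finitely generated subbrace $F$ to some $F'$ containing the $I$- and $J$-components of the generators, apply the Fitting-type Theorem~\ref{teo:Fitting-brace} inside $F'$, and then intersect back with $F$. The paper handles this last step in a single clause (``by suitably replacing $F$''), silently invoking the fact stated right after Definition~\ref{definitioncentralnilp} that $K\cap F$ is $F$-centrally nilpotent whenever $K$ is $F'$-centrally nilpotent and $F\leq F'$; you instead spell out this intersection argument via the lower-central-series inclusion $\Gamma_{k+1}(K\cap F)^F\subseteq \Gamma_{k+1}(K)^{F'}\cap F$, which is a valid (and perhaps clearer) way to justify the same step.
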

\begin{proof}
It is clearly enough to prove the statement for two locally $B$-nilpotent ideals~$I$ and $J$. Let $F$ be a finitely generated subbrace of $B$. Choose a finitely generated subbrace $E$ of $F\cap (I+J)$. In order to prove that $E$ is contained in an $F$-centrally nilpotent ideal of $F$, we may assume $E=E_1\cup E_2$, where $E_1\subseteq I$ and $E_2\subseteq J$, by suitably replacing $F$. Now, $E_1$ and $E_2$ are respectively contained in $F$-centrally nilpotent ideals $I_1$ and $I_2$ of $F$. Since $I_1+I_2$ is \hbox{$F$-cen}\-tral\-ly nilpotent, we are done.
\end{proof}

\medskip

By Theorem \ref{hirsch}, every brace admits a unique maximal locally $B$-nilpo\-tent ideal, we call it the {\it Hirsch-Plotkin radical} of $B$, and we denote it by~$\operatorname{HP}(B)$. Using ideas from the proof of Theorem \ref{locallnilp} (1), we see that every locally $B$-nilpotent ideal is actually $\zeta_B$-nilpotent. Finally, we note that for a locally finite brace $B$, the concepts of locally $B$-nilpotent ideal and $\zeta_B$-ideal coincide.

\section{Worked examples}
\label{wex}

\noindent In this section we describe the main examples of the paper. These examples are all constructed in a similar fashion, which we now explain, and all the computations can be done with the computer algebra system \textsf{GAP} \cite{GAP4-12-2} and the functions of its package \textsf{YangBaxter} \cite{VendraminKonovalov22-YangBaxter-0.10.2}.

Braces can be defined from bijective $1$-cocycles associated with actions of groups. Let $(C,\cdot)$ and $(B,+)$ be groups such that $C$ acts on $B$ by means of a group homomorphsim $\lambda \colon C \rightarrow \Aut(B,+)$, $c\mapsto \lambda_c$. A bijective map $\delta \colon C \rightarrow A$ is said to be a \emph{bijective $1$-cocycle} associated with $\lambda$, if $\delta(cd) = \delta(c) + \lambda_c(\delta(d))$, for every $c,d\in C$. Following \cite{BallesterEsteban22}, bijective $1$-cocycles can be constructed by means of trifactorised groups. In the previous situation, take $G = [B]C$ the semidirect product associated with $\lambda$, written in multiplicative notation for the sake of uniformity. If $D$ is a subgroup of $G$ such that $G = BC = BD = DC$ and $B \cap D = D\cap C = 1$, then $G$ is said to be a \emph{trifactorised group}, and there exists a bijective $1$-cocycle $\delta \colon C \rightarrow (B,+)$, given by $D = \{\delta(c)c: c\in C\}$. At this point, observe that $\delta(c)$ must be translated in additive notation. Then, $(B,+)$ admits a brace structure by means of $ab := \delta(\delta^{-1}(a)\delta^{-1}(b))$, for every $a,b\in B$ (see \cite[Proposition 1.11]{GuarnieriVendramin17}), for example).

\smallskip

We are now ready to delve into our examples. The first of them shows that the idealiser of a subbrace (as introduced in \cite{JespersVanAntwerpenVendramin22}) does not exist in general (even for braces of abelian type).

\begin{aex}
\label{ex:noidealiser}
Let $B =\langle a_1\rangle \times \langle a_2\rangle \times \langle a_3\rangle\cong C_4\times C_4\times C_2$, whose operation will be written additively, and let\[
\begin{array}{c}
  C=\langle m_1,m_2,m_3,m_4,m_5\mid m_1^2=m_4, m_2^2=1, m_3^2=m_4^2=m_5, m_5^2=1,\\[0.1cm]
                m_1m_2{m_1^{-1}}=m_3m_2,
                           m_1m_3{m_1^{-1}}=m_5m_3, m_2m_3m_2^{-1}=m_5m_3,\\[0.1cm]
                    m_1m_4{m_1^{-1}}=m_2m_4{m_2^{-1}}=m_3m_4{m_3^{-1}}=m_4,\\[0.1cm]
  m_1m_5{m_1^{-1}}=m_2m_5{m_2^{-1}}=m_3m_5{m_3^{-1}}=m_4m_5{m_4^{-1}}=m_5\rangle.
\end{array}\]
We note that $B$ and $C$ are groups of order~$32$. Since $m_3=m_1m_2m_1^{-1}m_2^{-1}$, $m_4=m_1^2$, $m_5=m_1m_3m_1^{-1}m_3=m_3^2$, we have that $\langle m_3, m_4, m_5\rangle$ is contained in $\Frat(C)$ (in fact, they coincide) and so $C=\langle m_1, m_2\rangle$. We have that $C$ acts on $B$ by means of an action $\lambda$ defined by
\begin{align*}
  \lambda_{m_1}(a_1)&=3a_1+a_3,&\lambda_{m_2}(a_1)&=3a_1,\\
  \lambda_{m_1}(a_2)&=a_1+a_2+a_3,&\lambda_{m_2}(a_2)&=a_1+a_2+a_3,\\
  \lambda_{m_1}(a_3)&=a_3,&\lambda_{m_2}(a_3)&=a_3.
\end{align*}
We note that
\begin{align*}
  \lambda_{m_3}(a_1)&=\lambda_{m_1m_2m_1^{-1}m_2^{-1}}(a_1)=a_1,&\lambda_{m_4}(a_1)&=\lambda_{m_1^2}(a_1)=a_1\\
  \lambda_{m_3}(a_2)&=\lambda_{m_1m_2m_1^{-1}m_2^{-1}}(a_2)=a_2+a_3,&\lambda_{m_4}(a_2)&=\lambda_{m_1^2}(a_2)=a_2+a_3,\\
  \lambda_{m_3}(a_3)&=\lambda_{m_1m_2m_1^{-1}m_2^{-1}}(a_3)=a_3,&\lambda_{m_4}(a_3)&=\lambda_{m_1^2}(a_3)=a_3,
\end{align*}
and $\lambda_{m_5}$ is the identity map on $B$.

We can consider the semidirect product $G=[B]C$ with respect to this action. Then $G$ turns out to be a trifactorised group as it possesses a subgroup $D=\langle a_1a_2^3m_1, a_1m_2\rangle$ such that $D\cap C=D\cap B=1$, $DC=BD=G$. Thus, there exists a bijective $1$-cocycle $\delta\colon C\longrightarrow B$ with respect to $\lambda$ given by Table~\ref{tb:noidealiser}). This yields a product $\cdot$ in $B$ and we get a brace of abelian type $(B,+,\cdot)$ of order~$32$. This brace corresponds to \hbox{\texttt{SmallBrace(32, 14649)}} in the \textsf{Yang--Baxter} library for~\textsf{GAP}.
\begin{table}
  \[
    \begin{array}{llll}
      c&\delta(c)&c&\delta(c)\\\hline
    1&0                      &m_1&a_1+3a_2               \\
    m_5&2a_1                 &m_1m_5&3a_1+3a_2         \\
    m_4&3a_1+2a_2+a_3         &m_1m_4&a_2                \\
    m_4m_5&a_1+2a_2+a_3       &m_1m_4m_5&2a_1+a_2       \\
    m_3&3a_1+a_3              &m_1m_3&2a_1+3a_2          \\
    m_3m_5&a_1+a_3            &m_1m_3m_5&3a_2           \\
    m_3m_4&2a_1+2a_2          &m_1m_3m_4&a_1+a_2         \\
    m_3m_4m_5&2a_2            &m_1m_3m_4m_5&3a_1+a_2    \\
    m_2&a_1                   &m_1m_2&3a_2+a_3           \\
    m_2m_5&3a_1              &m_1m_2m_5&2a_1+3a_2+a_3  \\
    m_2m_4&2a_2+a_3           &m_1m_2m_4&3a_1+a_2+a_3    \\
    m_2m_4m_5&2a_1+2a_2+a_3  &m_1m_2m_4m_5&a_1+a_2+a_3  \\
    m_2m_3&2a_1+a_3           &m_1m_2m_3&3a_1+3a_2+a_3   \\
    m_2m_3m_5&a_3             &m_1m_2m_3m_5&a_1+3a_2+a_3 \\
    m_2m_3m_4&a_1+2a_2        &m_1m_2m_3m_4&2a_1+a_2+a_3 \\
    m_2m_3m_4m_5&3a_1+2a_2   &m_1m_2m_3m_4m_5&a_2+a_3   \\\hline
  \end{array}
  \]
  \caption{Associated bijective $1$-cocycle}
  \label{tb:noidealiser}
\end{table}

We have that $\langle 2a_1+2a_2\rangle_+\le (B,+)$, corresponding to $\langle m_3m_4\rangle\le C$ (through $\delta$), defines a subbrace $S$ of $B$ of order~$2$. Also, $\langle 2a_1, 2a_2, a_1+a_2+a_3\rangle_+\le (B,+)$, corresponding to $\langle m_5,m_3m_4m_5,  m_1m_2m_4m_5\rangle\le C$, defines a subbrace $T$ of~$B$ of order~$8$. Furthermore $\langle 2a_1,2a_2, a_1+a_3\rangle_+\le (B,+)$, corresponding to $\langle m_5, m_3m_4m_5, m_3m_5\rangle\le C$, defines another subbrace $U$ of~$B$ of order~$8$.

We note that $S$ is not a left ideal of $B$, because $\lambda_{m_1}(2a_1+2a_2)=2(3a_1+a_3)+2(a_1+a_2+a_3)=2a_2\notin S$. On the other hand, $S$ is a left ideal of $T$, since $\lambda_{m_5}(2a_1+2a_2)=\lambda_{m_3m_4m_5}(2a_1+2a_2)=\lambda_{m_1m_2m_4m_5}(2a_1+2a_2)=2a_1+2a_2$. Furthermore, $\langle m_3m_4\rangle$ is a normal subgroup of $\langle m_5, m_3m_4m_5, m_1m_2m_4m_5\rangle$. Therefore, $S$ is an ideal of $T$. We also have that $S$ is a left ideal of $U$, since $\lambda_{m_5}(2a_1+2a_2)=\lambda_{m_3m_4m_5}(2a_1+2a_2)=\lambda_{m_3m_5}(2a_1+2a_2)=2a_1+2a_2$. Moreover, $\langle m_3m_4\rangle$ is a normal subgroup of $\langle m_5, m_3m_4m_5,m_3m_5\rangle$. Therefore,~$S$ is an ideal of $U$.

We prove now that the subbrace $D=\langle T,U\rangle$ of $B$ generated by $T$ and $U$ is $B$.  Let $H$ be the additive group of $D$. Then $H\geq\langle 2a_1, a_2, a_1+a_3\rangle_+$. Thus, if $R=\delta^{-1}(H)$ is the corresponding multiplicative group, then $\delta^{-1}(2a_1)=m_5\in R$, $\delta^{-1}(a_2)=m_1m_4\in R$, $\delta^{-1}(2a_2)=m_3m_4m_5\in R$, $\delta^{-1}(a_1+a_3)=m_3m_5\in R$, $\delta^{-1}(a_1+a_2+a_3)=m_1m_2m_4m_5\in R$, which implies that $C=\langle m_1,m_2,m_3,m_4,m_5\rangle=R$. Thus, $H=(B,+)$ and $\langle T,U\rangle=B$.

Finally, suppose that $S$ possesses an idealiser in $B$. Since it must contain every subbrace of $B$ in which $S$ is an ideal, it must contain $T$ and $U$. It follows that the idealiser of $S$ in $B$ must be $B$, but $S$ is not an ideal of $B$.
\end{aex} 

\medskip

Our second example shows that there is no analogue of Fitting's theorem for central nilpotency, even for braces of abelian type

\begin{aex}
\label{ex:nofitting}
Let $B =\langle a\rangle\times \langle c\rangle \times \langle d\rangle \times \langle e\rangle\cong C_4\times C_2\times C_2\times C_2$, written additively. Let us consider the automorphisms $g_1$, $g_2$, $g_3$ of $B$ given by
\begin{align*}
g_1\colon a&\longmapsto 3a+d&g_2\colon a&\longmapsto a&g_3\colon a&\longmapsto a\\
c&\longmapsto c&c&\longmapsto c&c&\longmapsto 2a+c\\
d&\longmapsto d&d&\longmapsto 2a+d&d&\longmapsto d\\
e&\longmapsto c+e&e&\longmapsto 2a+e&e&\longmapsto 2a+e
\end{align*}
If $g_4=g_1g_2g_1^{-1}g_2^{-1}$ and $g_5=g_1g_3g_1^{-1}g_3^{-1}$, then their action on $(B,+)$ is
\begin{align*}
  g_4\colon a&\longmapsto 3a&g_5\colon a&\longmapsto a\\
  c&\longmapsto c&c&\longmapsto c\\
  d&\longmapsto d&d&\longmapsto d\\
  e&\longmapsto e&e&\longmapsto 2a+e
\end{align*}
and we have that $C=\langle g_1,g_2,g_3\rangle=\langle g_1, g_2, g_3, g_4, g_5\rangle$ satisfies the following relations:
\begin{align*}
  g_1^2&=1,\\
  g_2^2&=1,&g_1g_2g_1^{-1}&=g_4g_2,\\
  g_3^2&=1,&g_1g_3g_1^{-1}&=g_5g_3,&g_2g_3g_2^{-1}&=g_3,\\
  g_4^2&=1,&g_1g_4g_1^{-1}&=g_4,&g_2g_4g_2^{-1}&=g_4,&g_3g_4g_3^{-1}&=g_4,\\
  g_5^2&=1,&g_1g_5g_1^{-1}&=g_5,&g_2g_5g_2^{-1}&=g_5,&g_3g_5g_3^{-1}&=g_5,&g_4g_5g_4^{-1}&=g_5.
\end{align*}
It follows that $C$ is a group of order~$32$. We can consider the semidirect product $G = [B]C$ with respect to this action $\lambda \colon C\longrightarrow\operatorname{Aut}(B)$. Let
\[D=\langle a^2cg_1, cdeg_2, a^3cg_3\rangle=\langle a^2cg_1,cdeg_2,a^3cg_3,cg_4,dg_5\rangle; \]
then $D$ is a group of order~$32$, satisfying the same relations as~$C$, and, since $\langle a^2c,cde,a^3c,c,d\rangle=B$, we obtain that $G=DC=BD$ and $D\cap C=D\cap B=1$. This leads to the bijective $1$-cocycle $\delta\colon C \rightarrow B$ given in  Table~\ref{tab-exempleFitting}, and we can define a structure of a brace of abelian type on $(B,+,\cdot)$ of order~$32$. This brace corresponds to \texttt{SmallBrace(32, 23060)} in the \textsf{Yang--Baxter} library for \textsf{GAP}.
\begin{table}
\[
  \begin{array}{rlrl}
    x&\delta(x)    &        x&\delta(x)  \\\hline
    1&0            &g_5g_3g_2&a+e     \\
    g_1&2a+c       &g_3g_2g_1&3a+e    \\
    g_2&c+d+e      &g_4g_2g_1&2a+d+e  \\
    g_3&3a+c       &g_5g_2g_1&c+e     \\
    g_4&c          &g_4g_3g_1&a+c+d   \\
    g_5&d          &g_5g_3g_1&3a      \\
    g_2g_1&2a+c+d+e&g_5g_4g_1&2a+d    \\
    g_3g_1&3a+d    &g_4g_3g_2&a+c+d+e \\
    g_4g_1&2a   & g_5g_4g_2&2a+e        \\
    g_5g_1&2a+c+d& g_5g_4g_3&a+d           \\
    g_3g_2&3a+d+e& g_4g_3g_2g_1&a+c+e      \\
    g_4g_2&d+e   & g_5g_3g_2g_1&a+d+e      \\
    g_5g_2&2a+c+d& g_5g_4g_2g_1&e          \\
    g_4g_3&a     & g_5g_4g_3g_1&a+c        \\
    g_5g_3&3a+c+d& g_5g_4g_3g_2&3a+c+e     \\
    g_5g_4&c+d   & g_5g_4g_3g_2g_1&3a+c+d+e \\\hline
  \end{array}\]
  \caption{Associated bijective $1$-cocycle}
  \label{tab-exempleFitting}
\end{table}

Since $C$ has order~$32$, we have that $\operatorname{Ker}\lambda=1$. In particular, we have that $\zeta(B)=0$, and $B$ is not centrally nilpotent.

Now, let us compute the ideals of~$B$. Suppose that $I$ is a non-zero ideal of $B$ with additive group $L$ and multiplicative group $E$. Since $E$ is a normal subgroup of $C$, it must contain a minimal normal subgroup of $E$. All minimal normal subgroups of $C$ are contained in $\operatorname{Z}(C)=\langle g_4, g_5\rangle$. Hence $E$ must contain $\langle g_4\rangle$, $\langle g_5\rangle$ or $\langle g_4g_5\rangle$. In the first case, $L$ must contain $\delta(g_4)=c$. Since $L$ must be invariant under the action of $C$, it should contain $g_3(c)=2a+c$. Consequently, $\langle 2a,c\rangle_+\le L$. In particular, $\delta^{-1}(2a)=g_4g_1\in E$ and~\hbox{$\langle g_1, g_4\rangle\le E$.} Since $E\trianglelefteq C$, we have that $g_3g_1g_3^{-1}=g_5g_1\in E$, so $\langle g_1, g_4, g_5\rangle\le E$. 

Similarly, if $g_5\in E$, then $\delta(g_5)=d\in L$. Thus, $g_2(d)=2a+d\in L$ and hence $\langle 2a,d\rangle\leq L$. Now, $g_1g_4\in E$, so $g_3^{-1}g_1g_4g_3=g_1g_4g_5\in E$ and also $g_5\in E$. Therefore $\langle g_1, g_4, g_5\rangle\le E$.

Finally, if $g_4g_5\in E$, then $\delta(g_4g_5)=c+d\in L$, so $g_2(c+d)=2a+c+d\in L$ and hence $\langle 2a,c+d\rangle_+\leq L$. Again, $g_1g_4\in E$, so $g_3^{-1}g_1g_4g_3=g_1g_4g_5\in E$ and also $g_5\in E$. Thus, $\langle g_1, g_4, g_5\rangle\le E$.

In all cases, we found out that $\langle g_1, g_4, g_5\rangle\le E$. Since $\delta(\langle g_1,g_4,g_5\rangle)=\langle 2a, c,d\rangle_+\leq (B,+)$ is a $\delta$-invariant subgroup and $\langle g_1,g_4,g_5\rangle\trianglelefteq C$, we have that $J=\langle 2a,c,d\rangle_+$ is the unique ideal of $B$ of order~$8$. We observe that~$B/J$ is abelian. Therefore, the only three ideals of order $16$ of $B$ are $I_1=\langle 2a, c, d, e\rangle_+$,  $I_2=\langle a+e, 2a, c, d\rangle_+$, $I_3=\langle a, c,d\rangle_+$.

It can be easily checked that
\[0\leq \langle c\rangle\le \langle 2a, c\rangle_+\le I_1,\quad 0\le \langle c+d\rangle_+ \le \langle 2a, c+d\rangle_+ \le I_2\]  and
\[0\le \langle d\rangle_+ \le \langle 2a, d\rangle_+\le I_3\] are $c$-series of $I_1$, $I_2$ and $I_3$, respectively. In particular, $I_1$, $I_2$ and $I_3$ are centrally nilpotent braces. However, $B=I_1+I_2=I_1+I_3=I_2+I_3$, but, as we have mentioned, $B$ is not centrally nilpotent.
\end{aex}

\medskip

Our third example shows that another classical property of nilpotency of finite groups fails for central nilpotency in finite braces: there may be abelian subideals that are not contained in any centrally nilpotent ideals.

\begin{aex}
\label{ex:triv_nosubnilid}
Let $$(B,+) =\langle a \rangle \times \langle b \rangle \cong \operatorname{C}_2\times \operatorname{C}_{12}\quad\textnormal{ and }\quad(C,\cdot) = [\langle \sigma\rangle]\langle\tau\rangle \cong \operatorname{Dih}_{24}.$$ We have that $C$ acts on $B$ by means of the action $\lambda$ defined by
\begin{align*}
  \lambda_{\sigma}(a)&= a +6b,&\lambda_{\tau}(a)&=a,\\
  \lambda_{\sigma}(b)&= a+b,&\lambda_{\tau}(b)&= a -b,
\end{align*}
We can consider the semidirect product $G$ of $B$ and $C$ with respect to this action. Then $G$ turns out to be a trifactorised group as it possesses a subgroup $D=\langle ab^7\sigma, b^6\tau\rangle$ such that $D\cap C=D\cap B=\{1\}$, $DC=BD=G$. Thus, there is a bijective $1$-cocycle $\delta\colon C\longrightarrow B$ with respect to $\lambda$ given by  Table~\ref{tb:triv_nosubnilid}. This yields a product in $B$ and we get a brace of abelian type $(B,+,\cdot)$ of order $24$. This brace corresponds to \texttt{SmallBrace(24, 57)} in the \textsf{Yang--Baxter} library for \textsf{GAP}.
\begin{table}
  \[
    \begin{array}{llllllll}
      c&\delta(c)&c&\delta(c)& c& \delta(c)& c& \delta(c)\\\hline
    1&0                 & \sigma^6 & a        & \tau         & 6b   & \sigma^6\tau    & a+6b \\
    \sigma & a+7b       & \sigma^7 & b        & \sigma\tau   & a+b  & \sigma^7\tau    & 7b\\
    \sigma^2 & a + 8b   & \sigma^8 & 8b       & \sigma^2\tau & a+2b & \sigma^8\tau    & 2b \\
    \sigma^3 & 9b       & \sigma^9 & a+3b     & \sigma^3\tau & 3b   & \sigma^9\tau    & a+9b\\
    \sigma^4 & 4b       & \sigma^{10} & a+4b  & \sigma^4\tau & 10b  & \sigma^{10}\tau & a+10b  \\
    \sigma^5 & 11b      & \sigma^{11} & 5b    & \sigma^5\tau & a+5b & \sigma^{11}\tau & 11b\\\hline
  \end{array}
  \]
  \caption{Associated bijective $1$-cocycle}
  \label{tb:triv_nosubnilid}
\end{table}

Let $I$ be any ideal of $B$ of order $12$ put $E=\delta^{-1}(I,+)$. Since $(I,+)$ is a maximal subgroup of $(B,+)$, it must contain its Frattini subgroup, which is~$\langle 6b\rangle$. As $\delta^{-1}(6b) = \tau$ and $E\trianglelefteq C$, it follows that $\sigma\tau\sigma^{-1} = \sigma^2\tau \in E$. Therefore, $\delta(\sigma^2\tau) = a+2b \in I$ and then $(I,+) = \langle a, 2b\rangle_+$. Since $I$ is~\hbox{$\lambda$-in}\-va\-riant, we get that $I = \langle a,2b\rangle_+$ is the only ideal of order $12$ of~$B$. 

Observe that $I$ is not abelian as $\Soc(I) = \langle a+4b\rangle_+$. Therefore, $(I,\cdot)$ is isomorphic to $\operatorname{Dih}_{12}$ and so $I$ can not be centrally nilpotent. Hence, $\Soc(I)$ is an abelian  subideal of $B$ of order $6$ such that it is not contained in any centrally nilpotent ideal of~$B$. 
\end{aex} 

Our last example shows that another sufficient condition for finitely generated nilpotent groups also falls in braces: there may be non-centrally nilpotent braces such that all subbraces are subideals.

\begin{aex}\label{ex:D}
Let $(B,+,\cdot)$ be the brace of abelian type of order $32$ studied in~\cite[Example 37]{BallesterEstebanJimenezPerezC-solublebraces}. It corresponds to \texttt{SmallBrace(32,24003)} in the \textsf{Yang--Baxter} library~for \textsf{GAP}, so that
\[\begin{array}{c}
(B,+) = \langle a \rangle \times \langle b\rangle \times \langle c \rangle \times \langle d \rangle \cong C_4\times C_2\times C_2 \times C_2,\quad\textnormal{and}\\[0.1cm]
(B,\cdot)  \cong \langle e, f, h \rangle \cong [C_2\times \operatorname{Q}_8]C_2
\end{array}\]
with bijective $1$-cocycle given by Table~\ref{tab:subid-nonilp}

\begin{table}[h]
\small
  \[
  \begin{array}{llllllll}
    x&\delta(x)&x&\delta(x)&x&\delta(x)&x&\delta(x)\\\hline
    1&0&h&c&f&a& fh&a+c\\
    e&3a+b&
    eh&3a&
    ef&b+c+d&
    efh&c+d\\
    e^2&b+c&
    e^2h&2a+b+c+d&
    e^2f&3a+d&
    e^2fh&a+c+d\\
    e^3&3a+b+d&
    e^3h&a+d&
    e^3f&b&
    e^3fh&2a\\
    e^4&2a+b+c&
    e^4h&2a+b&
    e^4f&a+b+c&
    e^4fh&a+b\\
    e^5&3a+c&
    e^5h&3a+b+c&
    e^5f&2a+d&
    e^5fh&2a+b+d\\
    e^6&2a+c+d&
    e^6h&d&
    e^6h&3a+b+c+d&
    e^6fh&a+b+d\\
    e^7&3a+c+d&
    e^7h&a+b+c+d&
    e^7f&2a+c&
    e^7fh&b+d\\\hline
  \end{array}\]
  \caption{Associated bijective $1$-cocycle}\label{tab:subid-nonilp}
\end{table}
and associated action given by
\begin{align*}
  e\colon a&\longmapsto a+c+d&f\colon a&\longmapsto a+b+c&h\colon a&\longmapsto a\\
  b&\longmapsto 2a+c&b&\longmapsto 2a+b& b&\longmapsto b\\
  c&\longmapsto b&c&\longmapsto c&c&\longmapsto c\\
  d&\longmapsto 2a+c+d&d&\longmapsto c+d&d&\longmapsto 2a+d.
\end{align*}

We start by providing all subbraces of order $2$. These are generated by those elements $x\in (B,+)$ of order $2$ such that $\lambda_x(x) = x$. We have:
\[ \begin{array}{lll}
S_1 = \{1, 2a+b+d\},  & S_2 = \{1,c\}, & S_3 = \{1, b+c\}, \\
S_4 = \{1, c+d\}, & S_5 = \{1, 2a\}, & S_6 = \{1, 2a +b\}, \\
S_7 = \{1, 2a+b+c\}.& & 
\end{array}
\] (here, $S_5$ is the only left ideal). For subbraces of order $4$, we need those subgroups $H \leq (B,+)$ of order $4$ such that $\delta^{-1}(H)$ is also a subgroup of $\langle e, f, h \rangle$. We find
{\small
\[ \def\arraystretch{1.2}
\begin{array}{llll}
H & \delta^{-1}(H) & H & \delta^{-1}(H) \\ \hline
S_8 = \langle 2a, b+c\rangle   & \langle e^3fh, e^7fh\rangle & S_9 = \langle 2a+b, c\rangle & \langle e^4,h\rangle  \\
S_{10} = \langle b, 2a+c\rangle & \langle e^3f \rangle  & S_{11} = \langle c+d, 2a+b+d\rangle  & \langle efh, e^5fh\rangle  \\
S_{12} = \langle 2a+d,b+c+d\rangle & \langle e^5f \rangle &  S_{13}= \langle d, 2a+b+c\rangle &  \langle e^6h \rangle \\
S_{14} = \langle 2a+b+c, b+d\rangle & \langle e^2 \rangle &  \\ \hline
\end{array}
\]}(here, $S_8$ is the only left ideal). For subbraces of order $8$, we need those subgroups $H \leq (B,+)$ of order $8$ such that $\delta^{-1}(H)\leq\langle e, f, h \rangle$. Thus,
\[ \def\arraystretch{1.2}
\begin{array}{ll}
H & \delta^{-1}(H)  \\ \hline
S_{15} = \langle 2a, b, c \rangle   & \langle e^3fh, e^3f \rangle \\
S_{16} = \langle 2a, b+c, b+d \rangle & \langle e^3fh, e^2\rangle  \\
S_{17} = \langle 2a, b+c, d\rangle & \langle e^3fh, e^6h\rangle  \\ \hline
\end{array}\]
(here, $S_{15}$ is the only left ideal). The only subbrace of order $16$ is the only non-zero proper ideal of $B$, that is, $S_{18} = \langle 2a,b,c,d\rangle$.

The following relations can be easily checked to hold:
\[\begin{array}{r}
S_1, S_4, S_7 \unlhd S_{11} \unlhd S_{15} \unlhd S_{18} \unlhd B\\
S_3, S_5, S_7 \unlhd S_8 \unlhd S_{16} \unlhd S_{18} \unlhd B\\
S_2, S_6, S_7 \unlhd S_{9} \unlhd S_{17} \unlhd S_{18} \unlhd B\\
S_{14} \unlhd S_{15} \unlhd S_{18}\unlhd B\\
S_{12}, S_{13} \unlhd S_{17} \unlhd S_{18}\unlhd B\\
S_{10} \unlhd S_{17} \unlhd S_{18} \unlhd B
\end{array}\]
Therefore, all subraces are subideals but $B$ is not soluble.
\end{aex}

\section{Acknowledgments}

\noindent All authors are members of the non-profit association ``Advances in Group Theory and Applications'' (www.advgrouptheory.com). The third and fifth authors are supported by GNSAGA (INdAM) and wish to express their gratitude to the Department of Mathematics of the University of Valencia for its support during their stay. The third author has been supported by a research visiting grant issued by the Istituto Nazionale di Alta Matematica (INdAM).

%\bibliographystyle{plain}
%\bibliography{bibgroup}

\end{document}